\documentclass[12pt,a4paper]{article}
\usepackage{amsmath}
\usepackage{amssymb}
\usepackage{accents}

\usepackage{mathtools}
\usepackage{amsthm}
\usepackage[usenames]{color}

\usepackage{graphicx}

\usepackage[numbers,sort&compress]{natbib}
\usepackage{doi}   
\usepackage{url} 

\allowdisplaybreaks

\newtheorem{theo}{Theorem}
\newtheorem{theorem}[theo]{Theorem}

\newtheorem{lemma}[theo]{Lemma}

\newtheorem{prop}[theo]{Proposition}

\newtheorem{cor}[theo]{Corollary}
\newtheorem{corollary}[theo]{Corollary}

\newtheorem{remark}[theo]{Remark}

\newtheorem{example}[theo]{Example}

\newcommand{\Z}{{\mathbb Z}}
\newcommand{\N}{{\mathbb N}}

\newcommand{\V}{{\mathcal V}}
\newcommand{\K}{{\mathcal K}}

\newcommand{\R}{{\mathbb R}}

\newcommand{\I}{{\mathcal I}}

\let\phi=\varphi

\newcommand{\bbP}{\mathbb{P}}
\newcommand{\E}{{\mathbb E}}

\newcommand{\gep}{\varepsilon} 

\newcommand{\p}{\mathbb{P}}

\newcommand{\bbS}{\mathbb{S}}

\newcommand{\bL}{\mathbb{L}}

\newcommand{\ind}{{\mathbf{1}}} 

\DeclareMathSymbol{\widehatsym}{\mathord}{largesymbols}{"62}

\newcommand{\capa}{\mathop{\mathrm{cap}}}

\newcommand{\Out}{\mathrm{Out}}
\newcommand{\In}{\mathrm{Ins}}
\newcommand{\Hin}{\mathrm{Hin}}

\newcommand{\harm}{\textrm{harm}}

\title{Random interlacements on transient weighted graphs: 0-1 laws and FKG inequality}
\author{Orph\'ee Collin$^{1}$}

\begin{document}

\maketitle

{\footnotesize 
\noindent $^{~1}$Technische Universit\"at Wien, Austria
\\
\noindent e-mail:
\texttt{orphee.collin@tuwien.ac.at}
}

\begin{abstract}
We study some properties of the random interlacement model on a transient weighted graph, which was introduced by A. Teixeira in \cite{Teixeira2009}. We give a simple proof of the FKG-property and discuss the occurrence of several 0-1 laws for non-local events. We show in particular a 0-1 law for some increasing non-local events, without any assumption.
\\[.3cm]\textbf{AMS subject classifications (2010 MSC):}
60K35, 
82B43, 
60F20, 
60J10. 
\\[.3cm]\textbf{Keywords:} random interlacement model, percolation theory, 0-1 laws
\\[.3cm] This work is licensed under a Creative Commons Attribution | 4.0 International licence (CC-BY 4.0, https://creativecommons.org/licenses/by/4.0/).
\end{abstract}

\tableofcontents

\section{Overview}

We consider an irreducible and reversible Markov chain $(X_n)_{n\ge 0}$ on a countable state space. We use the framework of electrical networks: we consider a simple undirected graph $\mathcal{G}=(\mathbb{V}, \mathcal{E})$ and a collection of positive weights (or conductances) $\mathbf{a}=(a_{x,y})_{(x,y)\in E}$ (with $a_{x, y}=a_{y,x}$), and define on $\mathbb{V}$ the Markov chain $X=(X_n)_{n\ge 0}$ by the following transition probabilities: with $a_x:=\sum_{z\sim x} a_{x,z}$,
\begin{equation}
  p_{x,y}:=
  \begin{cases}
  \frac{a_{x,y}}{a_x}, & \text{ if } (x,y)\in \mathcal{E}, \\
  0 &  \text{ else}.
  \end{cases}
\end{equation}
We furthermore assume that $\mathcal{G}$ is connected and locally finite, and that $X$ is transient.

Following the work of A. Teixeira in \cite{Teixeira2009} (generalizing the construction made in \cite{Sznitman2010} for the SRW in $\Z^d$ in transient dimensions), we consider the random interlacement model built upon this transient weighted graph. The model consists of a random collection of biinfinite trajectories of $X$, which we will refer to as the \emph{interlacement process}: it is indeed a (Poisson) point process on the space of transient nearest-neighbor trajectories on $\mathcal{G}$. 
It is in particular interesting to focus on the \emph{interlacement set} $\I$, defined as the union of the ranges of all the trajectories. This random subset $\I$ of $\mathbb{V}$ can be viewed as a dependent percolation set. It is caracterised by the following property: for every finite subset $K$ of $\mathbb{V}$,
\begin{equation}\label{eq:car_prop_I}
  \bbP(\I \cap K = \emptyset)= \exp(- \capa(K)),
\end{equation}
where $\capa(K)$ denotes the capacity of $K$ with respect to $X$.

In \cite{Teixeira2009}, it was shown that the interlacement set $\I$ satifies the FKG inequality. Here, we point out that this relies in a direct way on the poissonian nature of the interlacement process, and we slightly extend the scope of the FKG inequality.

Next, we study the occurence of a 0-1 law for this model. Contrarily to the case of the SRW on $\Z^d, d\ge 3$, 
there is no natural candidate for an ergodic set of measure-preserving transformations. Our approach to the 0-1 law rather relies on \emph{non-local} events, that is, events that \emph{do not depend on the configuration inside any finite region} (in a sense that will be defined). 

We establish this 0-1 law if $X$ is tail-trivial - that is, when the $\sigma$-algebra of invariant events is trivial with respect to $X$. 
We also give a criterion for the occurence of the 0-1 law when this $\sigma$-algebra is purely atomic.
We further provide a quantitative criterion for the validity of the 0-1 law. 
Additionally, we show that a weaker 0-1 law, concerning events that rely only on one tail of the trajectories, holds in full generality. 
We finally show a 0-1 law for increasing events that are non-local in a slightly stronger sense.


\section{The model}

\subsection{Some notations}

As announced, we consider a simple undirected connected and locally finite graph $\mathcal{G}=(\mathbb{V}, \mathcal{E})$. In the sequel, subsets of $\mathbb{V}$ will be refered to as ``vertex sets''. 
We denote by $\mathcal{P}(\mathbb{V})$ the set of all vertex sets, and by $\mathcal{P}_f(\mathbb{V})$ the set of finite vertex sets. They are endowed with the inclusion order. 
An exhaustion of $\mathbb{V}$ is an increasing sequence $(K_n)_{n\ge 0}$ of finite subsets of $\mathbb{V}$ such that $\bigcup_{n\ge 0} K_n =\mathbb{V}$. 
For $f: P_f(\mathbb{V})\to \R$ and $l\in \R \cup\{-\infty, \infty\}$, 
we say that $f$ converges towards $l$, 
and write $f(K)\underset{K\to\mathbb{V}}\longrightarrow l$, or $\lim_{K\to \mathbb{V}} f(K) = l $,
if for every exhaustion $(K_n)_{n\ge 0}$ of $\mathbb{V}$, $f(K_n)\underset{n \to\infty}\longrightarrow l$. 
We observe that if $f$ is monotonous, then $\lim_{K\to \mathbb{V}} f(K)$ necessarily exists.


\medskip

Furthermore, we consider a family $\mathbf{a}=(a_{x,y})_{(x,y)\in \mathcal{E}}$ of positive conductances on the graph $\mathcal{G}$ and the induced Markov chain $X=(X_n)_{n\ge 0}$ on $\mathbb{V}$, having the following transition probabilities:
\begin{equation}
  p_{x,y}:=
  \begin{cases}
  \frac{a_{x,y}}{a_x}, & \text{ if } (x,y)\in \mathcal{E}, \\
  0 &  \text{ else}.
  \end{cases}
\end{equation}
with $a_x:=\sum_{z\sim x} a_{x,z}$. 
For every vertex $x$, let $P_x$ denote the law of $X$ started from $x$ and $E_x$ the associated expectation. 
We observe that $(a_x)_{x\in \mathbb{V}}$ is an invariant measure for $X$. Throughout the article, we work under the assumption that $X$ is transient.
For every vertex set $K$, we define the two following stopping times:
\begin{equation}
\begin{aligned}
  \tau_K & = \inf \{n\geq 0 : X_n \in K \} \qquad \text{ (hitting time of } K), \\
  \tau^+_K & = \inf \{n\geq 1 : X_n \in K \}  \qquad\text{ (return time to } K),
\end{aligned}
\end{equation}
as well as the random time:
$$\lambda_K := \sup \{n\ge 0 : X_n \in K\}  \qquad \text{ (time of the last visit in } K),$$
We use the conventions $\inf \emptyset = \infty$ and $\sup \emptyset = -\infty$.
We also define the inner escape boundary of $K$:
\begin{equation}
  \partial_X K :=\{x \in K : P_x[\tau_K^+=\infty]>0\}.
\end{equation}
We observe that, by transience, for any $x\in K$, $P_x$-almost surely, $\lambda_K<\infty$ and $X_{\lambda_K}\in \partial_X K$. 

\medskip

We now introduce the notions of equilibrium measure, capacity and harmonic measure of a finite vertex set $K$. The equilibrium measure is the following measure on $\mathbb{V}$, supported on $\partial_X K$: 
\begin{equation}
  e_K(x) := 
  \begin{cases}
    a_x P_x[\tau_K^+=\infty] & \text{if } x\in  K,\\ 
    0 & \text{otherwise}.
  \end{cases}
\end{equation}
The total mass of $e_K$ is called the capacity of $K$ and denoted:
\begin{equation}
  \capa(K):=e_K(\mathbb{V})= \sum_{x\in K} a_x P_x[\tau_K^+=\infty].
\end{equation}
Finally, the harmonic measure of $K$ (or normalized equilibrium measure) is:
\begin{equation}
  \harm_K(\cdot):= \frac{e_K(\cdot)}{\capa(K)}\, .
\end{equation}
Crucial to us is the following property, which derives elementarily from the reversibility of $X$.
For every finite vertex sets $K$ and $L$ with $K\subset L$, we have
\begin{equation}\label{eq:consistency_eq_meas}
  \sum_{x\in L} e_L(x) P_x[X_{\tau_K}\in \cdot, \tau_K<\infty] = e_K(\cdot),
\end{equation}
so that, denoting $P_{\harm_L}(\cdot)=\sum_{x\in \mathbb{V}} \harm_L(x) P_x(\cdot)$, we have
\begin{equation}
  P_{\harm_L}[\tau_K<\infty ]= \frac{\capa(K)}{\capa(L)} 
\end{equation}
and
\begin{equation}
  P_{\harm_L}[X_{\tau_K} \in \cdot |\tau_K<\infty] = \harm_K(\cdot) .
\end{equation}

\subsection{Interlacement process}\label{sec:def_interl_process}

We now present the construction of the interlacement process, refering to \cite{Teixeira2009} for details.

We consider the set of transient nearest-neighbor trajectories in $\mathcal{G}$ indexed by $\N \cup \{0\}$ (respectively, by $\Z$):
\begin{equation}
\begin{aligned}
    W^+=\{ w=(w_n)_{n\ge 0} : \quad  & \forall n\ge 0, w_n \sim w_{n+1} \text{ and }  \\
    & \forall x \in \mathbb{V}, \{n \ge 0 :w_n=x\} \text{ is finite}\},
\end{aligned}
\end{equation}
\begin{equation}
\begin{aligned}
    W=\{ w=(w_n)_{n\in \Z} : \quad  & \forall n\in \Z, w_n \sim w_{n+1} \text{ and }  \\
    & \forall x \in \mathbb{V}, \{n \in \Z:w_n=x\} \text{ is finite}\}.
\end{aligned}
\end{equation}
We endow $W^+$ (respectively, $W$) with the $\sigma$-algebra $\mathcal{W^+}$ (respectively, $\mathcal{W}$) generated by the coordinate maps $w \mapsto w_n$, for $n\ge 0$ (respectively, for $n\in \Z$). Note that the laws $P_x, x\in \mathbb{V}$, are probability measures on $(W^+, \mathcal{W}^+)$.

\smallskip

For every finite vertex set $K$, we denote by $W_K$ the set of trajectories in $W$ that hit $K$ and we consider the measure $\nu_K$ on $(W, \mathcal{W})$ caracterized by:
\begin{equation}
\nu_K\left( \left\{ w\in W: (w_{-n})_{n\ge 0} \in A, w_0=x, (w_n)_{n\ge 0}  \right\} \right) 
 = P_x[A|\tau_K^+=\infty] e_K(x) P_x[B], 
\end{equation}
for every $x\in \mathbb{V}$ and $A, B \in \mathcal{W^+}$. This measure has total mass $\capa(K)$ and is supported by the subset of $W_K$ consisting of the trajectories that enter in $K$ at time $0$.

We may then consider a Poisson point process with intensity measure $\nu_K$. For clarity, let us describe how this Poisson point process can be sampled:
\begin{itemize}
\item sample a number $\xi$ as a Poisson variable of parameter $\capa(K)$,
\item independently for each $i=1,  \dots, \xi$, sample a point $x_i$ under $\harm_K$,
\item independently for each $i=1, \dots, \xi$, attach to $x_i$ a trajectory that enters in $K$ through $x$, at time 0, as follows: independently,
\begin{itemize}
  \item for the negative indices, sample a trajectory under $P_x[ \cdot | \tau_K^+=\infty]$ and apply the function $n\mapsto -n$ to the indices,
  \item for the positive indices, sample a trajectory under $P_x[ \cdot ]$.
\end{itemize}
\end{itemize}
Now, let $K\subset L$ be two finite vertex sets. Consider the Poisson point process with intensity measure $\nu_L$. Restrict it to trajectories that hit $K$, and reindex these trajectories so that they enter $K$ at time 0. It follows from \eqref{eq:consistency_eq_meas} that the obtained point process is Poisson, with intensity $\nu_K$; see Theorem 2.1 in \cite{Teixeira2009} for details.

This consistency property allows us to extend the construction to the whole of $\mathcal{G}$: this gives the random interlacement model on $(\mathcal{G}, \mathbf{a})$. Let us construct it formally. To begin with, due to the reindexation in the consistency property, we choose to look at trajectories in $W$ up to time-translation. We define the equivalence relation:
\begin{equation}
  w\sim w' \quad  \text{ if and only if } \quad \exists k\in \Z, (w_{n+k})_{n\in \Z}=(w_n')_{n\in \Z} .
\end{equation}
Denote by $W^*$ the set of equivalence classes of $W$ under this equivalence relation and by $\pi^*:W\to W^*$ the associated projection. Denote by $\mathcal{W}^*$ the $\sigma$-algebra on $W^*$ inherited from $\mathcal{W}$. For each finite vertex set $K$, denote $W_K^*= \pi^*(W_K)$.

Consider now the measure $\nu$ on $(W^*, \mathcal{W}^*)$ caracterised by the fact that for any finite vertex set $K$, the measure $\nu\ind_{W_K^*}$, i.e, the restriction of $\nu$ to trajectories (seen up to time-shift) that hit $K$, is equal to $\nu_K \circ (\pi^*)^{-1}$. 
One way to construct the measure $\nu$ is to fix an exhaustion $(K_n)_{n\ge 0}$ with $K_0=\emptyset$ and set 
$$\nu = 
\left(\sum_{n\ge 1} \nu_{K_n} \ind_{W_{K_n} \setminus W_{K_{n-1}}} \right)
\circ (\pi^*)^{-1}.$$
Then we consider a Poisson point process $(W^*, \mathcal{W}^*)$ with intensity measure $\nu$. Formally, we define the space:
\begin{equation}\label{eq:def_state_Omega}
  \Omega =  \left\{ \omega = \sum_{i\ge 1} \delta_{w_i^*} ,  w_i^* \in W^* : \, \forall K \in \mathcal{P}_f(\mathbb{V}), \, \omega(W_K^*)<\infty \right\}.
\end{equation}
We endow $\Omega$ with the $\sigma$-algebra $\mathcal{A}$ generated by the maps $\omega \mapsto \omega(D)$ for $D\in \mathcal{W}^*$ 
and we consider on $(\Omega, \mathcal{A})$ the law $\bbP$ of a Poisson point process on $(W^*, \mathcal{W}^*)$ with intensity measure $\nu$. We will refer to $\omega =\sum_{i\ge 1} \delta_{w_i^*} $ (under $\bbP$) as the interlacement process, while 
\begin{equation}
  \I:=\bigcup_{i\ge 1} \text{Range}(w_i^*), \qquad  \V = \mathbb{V} \setminus \I 
\end{equation}
(with obvious definition for $\text{Range}(w_i^*)$) are respectively the interlacement set and the vacant set. 
We recover the caracteristic property of $\I$, announced in \eqref{eq:car_prop_I}, by observing that $\I\cap K=\emptyset$ is equivalent to $\omega(W_K^*)=0$ and that $\omega(W_K^*)$ is distributed as a Poisson variable with parameter $\nu(W_K^*)=\nu_K(W_K)=\capa(K)$. 

\begin{remark}\label{rem:full_interlacement_process}
  It is possible to multiply the intensity measure $\nu$ by a positive real $u$; or equivalently to multiply the family $\mathbf{a}$ of conductances by $u$. 
  We thus get a family of interlacement processes indexed by $u$. 
  There is a natural way to construct all these models together through an increasing coupling: 
  consider a Poisson point process $\sum_{i\ge 1} \delta_{(w_i^*, u_i)}$ on $\mathcal{W}^*\times [0, \infty)$ with intensity measure $\nu \otimes d u$; 
  then, for every $u>0$, set the interlacement process at level $u$ to be  $\omega^u:=\sum_{i\ge 1 : u_i\le u} \delta_{w_i^*}$.
  As soon as $u\le v$, $\omega^u$ is dominated by $\omega^v$. 
  In this paper, we will mostly work with $u$ fixed and equal to 1, and therefore disregard this coupling. 
\end{remark}

\section{Theorems}

\subsection{FKG inequality}


In his seminal paper, A.-S. Sznitman asked whether the interlacement set in $\Z^d$ satifies the FKG inequality (see \cite{Sznitman2010}, Remark 1.6). 
This question was answered positively by A. Teixeira in \cite{Teixeira2009} in the general setup of transient Markov chains, with a detailed proof. 
Here, we point out that this property holds simply because Poisson point processes satisfy in general the FKG inequality. 
We thus show that the interlacement process $\omega$, and not only the interlacement set $\I$, satifies the FKG inequality under $\mathbb{P}$.

\smallskip

Let us recall some definitions.
Let $(\mathcal{X}, \mathcal{F}, \le )$ be a mesurable space equipped with an order. 
A measurable subset $A$ of $\mathcal{X}$ is said to be increasing if 
\begin{equation}
\forall x, y  \in \mathcal{X}, \qquad  ( x \in A \text{ and } x \le y ) \implies y \in A\, .
\end{equation}
If $(\mathcal{X}, \le)$ and $(\mathcal{Y} , \le)$ are two ordered sets, a function $f: \mathcal{X} \to \mathcal{Y}$ is said to be non-decreasing if
\begin{equation}
\forall x, y \in \mathcal{X}, \qquad x \le y \implies f(x) \le f(y)\, . 
\end{equation}
A probability measure $P$ on $(\mathcal{X}, \mathcal{F})$ is said to satisfy the FKG property if for every increasing events $A$ and $B$ we have
\begin{equation}
P[A \cap B ] \ge P[A] \times P[B]\, ,
\end{equation}
or, equivalently, that for every measurable nonnegative and nondecreasing measurable functions $f, g : \mathcal{X} \to \R$, we have
\begin{equation}
E[fg] \ge E[f] \times E[g]\, ,
\end{equation}
where $E$ denotes the expectation under $P$, and $\R$ is equipped with its Borelian $\sigma$-algebra and its usual order. 
An $\mathcal{X}$-valued random variable is said to satisfy the FKG inequality if its law does.

\smallskip

The following lemma is an elementary fact.

\begin{lemma}\label{lem:f(X)_FKG}
Let $(\mathcal{X},\mathcal{F} , \le)$ and $(\mathcal{Y}, \mathcal{G} , \le)$ be two mesurable spaces, equiped with orders. Let $X$ be an $\mathcal{X}$-valued random variable and let $f:\mathcal{X} \to \mathcal{Y}$ be a measurable and non-decreasing function. 
If $X$ satifies the FKG inequality, then so does $f(X)$.
\end{lemma}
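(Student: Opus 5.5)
The plan is to pull increasing events back along $f$ and then apply the FKG property of $X$ directly. Let $A$ and $B$ be increasing measurable subsets of $\mathcal{Y}$. The candidate events on the $\mathcal{X}$ side are $f^{-1}(A)$ and $f^{-1}(B)$, and the law of $f(X)$ evaluated at $A$, $B$ and $A\cap B$ is the law of $X$ evaluated at their preimages. So it suffices to check that these preimages are increasing measurable events in $\mathcal{X}$.

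Measurability is immediate, since $f$ is measurable and $A, B \in \mathcal{G}$. For monotonicity, take $x \in f^{-1}(A)$ and $x \le y$. Then $f(x) \le f(y)$ because $f$ is non-decreasing, and $f(x) \in A$. Since $A$ is increasing, $f(y)\in A$, i.e.\ $y \in f^{-1}(A)$. The same argument applies to $B$.

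With this in hand, we use $f^{-1}(A\cap B) = f^{-1}(A)\cap f^{-1}(B)$ and the FKG inequality for $X$ to get
\[
\bbP[f(X)\in A\cap B] = \bbP[X \in f^{-1}(A)\cap f^{-1}(B)] \ge \bbP[X\in f^{-1}(A)]\,\bbP[X\in f^{-1}(B)],
\]
and the right-hand side equals $\bbP[f(X)\in A]\,\bbP[f(X)\in B]$. This is exactly the FKG inequality for the law of $f(X)$.

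There is no real obstacle here. The only point requiring care is to work with the event formulation of the FKG property, rather than the function formulation, so that no equivalence between the two needs to be invoked on a general ordered measurable space. If one preferred the function formulation instead, the analogous step would be that $g\circ f$ is nonnegative, measurable and non-decreasing whenever $g$ is, which is equally immediate.
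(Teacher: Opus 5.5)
Your proof is correct: $f^{-1}(A)$ and $f^{-1}(B)$ are measurable and increasing whenever $A,B$ are increasing measurable sets and $f$ is measurable and non-decreasing, so the FKG inequality for $X$ carries over directly to the law of $f(X)$. The paper gives no proof, calling the lemma an elementary fact, and your pull-back argument is exactly the routine verification it leaves to the reader.
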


We equip the set $\Omega$ introduced in \eqref{eq:def_state_Omega} with the order: 
\begin{equation}
\omega \le \omega'
\qquad 
\text{ if and only if } 
\qquad 
\forall D \in \mathcal{W}^* , \, \omega(D)  \le \omega'(D)\, .
\end{equation}
\qquad 

\begin{theorem}\label{th:FKG}
The law $\mathbb{P}$ on $(\Omega, \mathcal{A})$ satifies the FKG inequality.
\end{theorem}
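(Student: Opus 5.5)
The plan is to reduce to the classical fact that a Poisson point process satisfies the FKG (Harris) inequality, and to prove that fact by finite-dimensional approximation and a martingale argument. Since $\mathcal{A}$ is generated by the counting maps $\omega\mapsto\omega(D)$, $D\in\mathcal{W}^*$, I first restrict to functions depending on finitely many counts. Fix a finite vertex set $K$. On $W_K^*$ the intensity $\nu$ is finite, with mass $\capa(K)$, so $\omega\ind_{W_K^*}$ is a Poisson process with finite intensity, independent of $\omega\ind_{(W_K^*)^c}$. Take a finite measurable partition $D_1,\dots,D_m$ of $W^*_K$. The vector $(\omega(D_1),\dots,\omega(D_m))$ has independent Poisson coordinates, each a random variable on the totally ordered set $\Z_{\ge 0}$. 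By Harris' inequality for product measures on products of totally ordered spaces (proved by induction on $m$ via conditioning; the case $m=1$ is Chebyshev's sum inequality), this vector satisfies FKG for the coordinatewise order.

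Next, fix an exhaustion $(K_n)$ and an increasing sequence of finite partitions $\mathcal{D}_n$ of $W^*_{K_n}$, refining each other, such that $\bigcup_n\sigma(\mathcal{D}_n)$ generates $\mathcal{W}^*$ restricted to $\bigcup_n W^*_{K_n}=W^*$. Countable generation is available because $\mathcal{W}$ is generated by countably many coordinate maps with countable range. Let $\mathcal{A}_n=\sigma(\omega(D):D\in\mathcal{D}_n)$, so that $\mathcal{A}_n\uparrow\mathcal{A}$, using the fact that counts of a generating $\pi$-system determine the point measure.

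Given bounded nondecreasing $f,g\ge0$ on $\Omega$, set $f_n=\E[f\mid\mathcal{A}_n]$ and $g_n=\E[g\mid\mathcal{A}_n]$. The key claim is that $f_n$, viewed as a function of the count vector $(\omega(D))_{D\in\mathcal{D}_n}$, is nondecreasing. To see this, write $\omega$ as the superposition of independent pieces: conditionally on the counts $k_D$, the points in each cell $D$ are i.i.d.\ with law $\nu(\cdot\cap D)/\nu(D)$, and the part outside $W^*_{K_n}$ is independent of them. Increasing some $k_D$ by one amounts to adding one independent point. Coupling the two conditional laws this way, one configuration dominates the other pointwise, so monotonicity of $f$ gives $f_n(k)\le f_n(k+e_D)$. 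Applying the finite-dimensional FKG from the first paragraph then gives $\E[f_ng_n]\ge\E[f_n]\E[g_n]=\E[f]\E[g]$.

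Finally, by martingale convergence, $f_n\to f$ and $g_n\to g$ in $L^2$, so $\E[f_ng_n]\to\E[fg]$. For unbounded $f,g$, truncate via $f\wedge M$, which remains nondecreasing, and apply monotone convergence. I expect the main obstacle to be the measure-theoretic bookkeeping: choosing the countable generating partitions, checking that $\mathcal{A}_n\uparrow\mathcal{A}$, and making the monotone coupling of the conditional laws rigorous. The inequality itself is routine.
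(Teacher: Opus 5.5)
Your proposal is correct and follows the same route as the paper. The paper's entire proof is to invoke the general FKG property of Poisson point processes with $\sigma$-finite intensity (Section 20.3 of Last--Penrose), and your discretization, Harris-inequality and martingale argument is a valid self-contained proof of that cited fact. The one step to tighten is countable generation: it does not follow merely from $\mathcal{W}$ being countably generated, since quotient $\sigma$-algebras need not be. It does hold here, because $\mathcal{W}^*$ restricted to $W_K^*$ is identified with the trace of $\mathcal{W}$ on $\{w\in W_K:\tau_K(w)=0\}$; alternatively you can bypass it by noting that any $\mathcal{A}$-measurable $f,g$ depend on only countably many counts $\omega(D)$.
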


\begin{proof}
It is a general fact that a Poisson point process on a measurable state space with $\sigma$-finite intensity measure satisfies the FKG inequality. See for example Section 20.3 of \cite{Last2017}. 
\end{proof}

Applying Lemma \ref{lem:f(X)_FKG}, we deduce from Theorem \ref{th:FKG} that the following variables satisfy the FKG inequality:
\begin{itemize}
\item the collection $\eta=(\eta_x)_{x\in \mathbb{V}}\in \{0, 1\}^\mathbb{V}$ defined by 
\[\eta_x := 
\begin{cases}
 1 & \text{if } x 
 \text{ is visited by at least one trajectory in }
 \omega \, , \\
0 & \text{else}  \, ,
\end{cases}
\]
the product set $ \{0, 1\}^{\mathbb{V}}$ being equipped with the product order.
In other terms, $\I$ (and therefore also $\V$) satisfies the FKG inequality (with respect to the inclusion order on subsets of $\mathbb{V}$). This is the FKG property shown in \cite{Teixeira2009}.
\item the collection $\eta=(\eta_x)_{x\in \mathbb{V}} \in \N_0^\mathbb{V}$ defined by 
\[\eta_x := 
\text{ number of trajectories in } \omega \text{ visiting } x \, ,
\]
the set $\N_0:=\{0\} \cup \N$ being equipped with the usual order, and the set $\N_0^\mathbb{V}$ with the product order.
\item the collection $\eta=(\eta_x)_{x\in \mathbb{V}} \in \N_0^\mathbb{V}$ defined by 
\[\eta_x := 
\text{ total number of visits to } x
\text{ by trajectories in } \omega\, .
\]
\end{itemize}


\subsection{The \texorpdfstring{$\sigma$}{sigma}-algebra of non-local events}\label{sec:intro_01laws}

We now turn to the main object of this article, which is to study 0-1 laws for random interlacements in our general framework of transient weighted graphs. 

In the case of vertex- or edge-transitive weighted graphs, there is a natural collection of measure-preserving transformations to be considered and one can expect ergodicity, i.e., a 0-1 law for events which are invariant under these transformations. This is the case for the original work of Sznitamn \cite{Sznitman2010}, i.e., for the SRW on $\Z^d, d\ge 3 $, where it was shown that translation-invariant events have probability either 0 or 1.  
In the general case, we rather choose to consider events that are ``non-local'' in the sense that they do not depend on the restriction of the configuration $\omega$ inside any finite region of the graph $\mathcal{G}$. 

\smallskip 

For every finite vertex set $K$, for every biinfinite trajectory $w\in W_K$, we define two relevant times:
\begin{equation}\label{eq:def_ta_K_lambda_k_w}
\begin{aligned}
\tau_K(w) &  = \inf \{n \in \Z : w_n \in K \} \quad \text{ (hitting time of } K),\\
\lambda_K(w) & = \sup \{n\in \Z : w_n \in K\} \quad  \text{ (time of the last visit in } K),
\end{aligned}
\end{equation}
and we break $w$ in three pieces according to these two times: we denote
\begin{itemize}
  \item by $Ins_K(w)$ the finite piece of trajectory of $w$ between times $\tau_K(w)$ and $\lambda_K(w)$, re-indexed so that it starts at time 0: $Ins_K(w):=(w_{\tau_K(w)+n})_{0\le n \le \lambda_K(w)-\tau_K(w)}$, 
  \item and by $Out_K(w)=(Out_{\rightarrow K}(w) , Out_{K\rightarrow}(w))$ the pair consisiting of the two following semi-infinite pieces of the trajectory $w$:
  \begin{itemize}
  \item $Out_{\rightarrow K}(w)$ the restriction of $w$ up to time $\tau_K(w)$, re-indexed so that it ends at time 0: $Out_{\rightarrow K}(w):=(w_{\tau_K(w)+n})_{n\le 0}$,
  \item and $Out_{K\rightarrow}(w)$ the restriction of $w$ from time $\lambda_K(w)$ on, re-indexed so that it starts at time 0: $Out_{K \rightarrow}(w):=(w_{\lambda_K(w)+n})_{n\ge 0}$.
  \end{itemize} 
\end{itemize}
Furthermore, we define two random variables $\In_K: \omega \mapsto \In_K(\omega)$ and $\Out_K: \omega\mapsto \Out_K(\omega)$ on $(\Omega, \mathcal{A})$ as follows. For any $\omega=\sum_{i\ge 1} \delta_{w_i^*}$ in $\Omega$, we pick for each $i\ge 1$ any representative $w_i$ of $w_i^*$, and define
\begin{equation}
\In_K(\omega)=\sum_{i\in I : w_i^*\in W_K^*} \delta_{Ins_K(w_i)}  \, ,
\end{equation}
and 
\begin{equation}
\Out_K(\omega)=\sum_{i\ge 1: w_i^*\in W_K^*} \delta_{Out_K(w_i)} + \sum_{i\in I : w_i^*\notin W_K^*} \delta_{w_i^*} \, .
\end{equation}
We finally define on $(\Omega, \mathcal{A})$ the sub-$\sigma$-algebra of non-local events:
\begin{equation}
  \mathcal{T}_{RI}= \bigcap_{K \in \mathcal{P}_f(\mathbb{V})} \sigma(\Out_K) \, .
\end{equation}
We will be mainly interested in proving the 0-1 law for the $\sigma$-algebra $\mathcal{T}_{RI}$: it is the property that every event in $\mathcal{T}_{RI}$ has probability either 0 or 1 under $\bbP$. 

\medskip 

This property does not hold in full generality: see Example \ref{ex:simple_not_01} for a simple counter-example.

\begin{example}\label{ex:simple_not_01}
Consider the Markov associated with the nearest neighbor graph on $\Z$ and the family of conductances:
\begin{equation}
  a_{n, n+1}=2^n, \qquad a_{-n-1, -n}=2^n, \qquad  n\ge 0, 
\end{equation}
i.e., the Markov chain with the transition probabilities $p_{0, 1}=p_{0, -1}=\frac{1}{2}$, and, for $n\ge 1$, $p_{n, n+1}=p_{-n, -n-1}=\frac{2}{3}$ and $p_{n, n-1}=p_{-n, -n+1}=\frac{1}{3}$.
Let us consider the trajectories of $\omega$ that go from $-\infty$ to $\infty$. These trajectories have to pass through 0, so their number is distributed as a Poisson variable with parameter 
\begin{equation}
  \begin{aligned}
  P_0& [X_n \underset{n\to\infty}\longrightarrow -\infty |\tau_0^+=\infty ] e_{\{0\}}(0) P_0[X_n \underset{n\to\infty}\longrightarrow \infty]\\
  = & P_0[X_n \underset{n\to\infty}\longrightarrow -\infty, \tau_0^+=\infty ] a_0 P_0[X_n \underset{n\to\infty}\longrightarrow \infty ].
  \end{aligned}
\end{equation}
This parameter is clearly positive and finite
, so the number of trajectories in $\omega$ going from $-\infty$ to $\infty$ is a nontrivial random variable. It is however measureable with respect to $\mathcal{T}_{RI}$. 
\end{example}

More generally, let us give a necessary condition for the 0-1 law to hold.  We consider the sub-$\sigma$-algebra of $(W^+,\mathcal{W}^+)$ consisting of events that are invariant under time-shift. Denoting by 
$\theta: W^+ \to W^+ , \, (w_n)_{n\ge 0}  \mapsto (w_{n+1})_{n\ge 0}$ the time-shift, we set
\begin{equation}
  \mathcal{T}_{MC}:=\left\{ A \in \mathcal{W}^+ : \theta^{-1}(A)=A \right\}.
\end{equation} 
For any events $A, B\in\mathcal{T}_{MC}$, let us consider the set of trajectories $w\in W$ such that $(w_{-n})_{n\ge 0}\in A$ and $(w_n)_{n\ge 0}\in B$ and let us denote by ``$A\rightarrow B$'' its image under the projection $\pi^*$. 
We observe that $\omega(A\to B)$, the random variable counting the number of trajectories in $\omega$ that fall in ``$A\rightarrow B$'', is measurable with respect to $\mathcal{T}_{RI}$ and is distributed as a Poisson variable with parameter $\nu(A\rightarrow B)$. 
The announced necessary condition follows: if the 0-1 law for $\mathcal{T}_{RI}$ holds, then
\begin{equation}\label{eq:necessary_cond}
  \forall A, B\in\mathcal{T}_{MC}, \qquad \nu(A\rightarrow B) \in \{0, \infty\}.
\end{equation}

\subsection{Under tail-triviality or tail-atomicity assumption}\label{sec:state_tail_assumptions}

We give a first condition under which the 0-1 law holds.
We say that the Markov chain $X$ is tail-trivial or that the 0-1 law for $\mathcal{T}_{MC}$ is satisfied if for some $x\in\mathbb{V}$, the invariant $\sigma$-algebra $\mathcal{T}_{RI}$ is trivial under $P_x$, i.e., $P_x$ gives probability either 0 or 1 to all event in $\mathcal{T}_{RI}$. Note that, by irreducibilty, requiring this for some $x$ in $\mathbb{V}$ is equivalent to requiring it for all $x$'s in $\mathbb{V}$. 

\begin{theorem}\label{th:tail_triviality}
Assume that the Markov chain $X$ is tail-trivial. Then, under $\bbP$, every event in $\mathcal{T}_{RI}$ has probability either 0 or 1.
\end{theorem}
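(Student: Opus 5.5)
The plan is to show that every $E\in\mathcal{T}_{RI}$ is independent of $\sigma(\In_K,\,\omega|_{W_K^*})$ for each finite $K$, up to an error that vanishes as $K$ grows. The 0-1 law then follows by the standard argument: $\mathcal{A}$ is generated by the increasing family $\sigma(\omega|_{W_{K_n}^*})$ along an exhaustion, so $E$ is independent of itself and $\bbP(E)=\bbP(E)^2$.

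Fix $K$ and a larger finite $L\supset K$. Decompose $\omega=\omega|_{W_L^*}+\omega|_{(W_L^*)^c}$; these two point processes are independent Poisson processes. Since $E\in\sigma(\Out_L)$, the event $E$ is a function of $\omega|_{(W_L^*)^c}$ and of the point process of the pairs $Out_L(w_i)$ for $w_i^*\in W_L^*$. We use the explicit description of $\nu_L$: conditionally on the number $\xi_L$ of trajectories hitting $L$, the trajectories are i.i.d. Each consists of a backward path under $P_x[\cdot|\tau_L^+=\infty]$ and a forward path under $P_x$, where $x$ is distributed according to $\harm_L$. Cutting at the last exit time $\lambda_L$ (a Markov property at last exit times, valid by transience and reversibility), the outgoing piece $Out_{L\to}$ is distributed as $P_y[\cdot|\tau_L^+=\infty]$ given its starting point $y\in\partial_X L$, independently of the inside piece. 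By time reversal, the same holds for the incoming piece. The information about $K$, namely $\omega|_{W_K^*}$, is a function of the $\In_L$-pieces together with $\xi_L$ and the entry and exit points. So conditionally on $\xi_L$ and on the entry/exit points $(x_i,y_i)$, the event $E$ is independent of $\omega|_{W_K^*}$.

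It therefore remains to show that the dependence on $(x_i,y_i)$ washes out as $L\to\mathbb{V}$. Tail-triviality is used here. Since $E$ lies in $\sigma(\Out_{L'})$ for every $L'$, we may push the tails far out. Concretely, for a single tail started at $y$ and conditioned not to return to $L$, the law restricted to $\mathcal{T}_{MC}$ coincides with $P_y$ restricted to the invariant events, up to a correction. Tail-triviality makes $P_y$ constant (equal to $0$ or $1$) on $\mathcal{T}_{MC}$, independently of $y$. The genuinely non-local part of a tail is thus deterministic, and the remaining dependence on $(x_i,y_i)$ passes through finite pieces, which are absorbed by enlarging $L$. Formally, I would approximate $E$ in $L^1(\bbP)$ by an event $E'$ depending on $\omega|_{W_M^*}$ for a large finite $M$. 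Choosing $L\supset M$ and applying the Markov/coupling decomposition then gives $|\bbP(E\cap F)-\bbP(E)\bbP(F)|\le 2\bbP(E\triangle E')+o(1)$ for $F\in\sigma(\omega|_{W_K^*})$.

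The main obstacle is the coupling step. We must show that the conditional law of the outside configuration given $(\xi_K,\text{entry/exit points in }K)$ is asymptotically, in total variation on $\mathcal{T}_{RI}$, independent of these data. The first thing I would try is Blackwell–Freedman type asymptotics: tail-triviality is equivalent to $\|P_x\circ\theta_n^{-1}-P_y\circ\theta_n^{-1}\|$ being small in a Cesàro or shift-coupling sense, which yields a shift-coupling of the forward tails from different starting points. For the backward tails, reversibility converts them to forward tails under $P_x[\cdot|\tau_K^+=\infty]$. This is absolutely continuous with respect to $P_x$ with bounded density $1/P_x[\tau_K^+=\infty]$, so triviality transfers. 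Shift-coupling suffices because membership in $\mathcal{T}_{RI}$ only depends on trajectories modulo finite modifications, which is exactly what $\sigma(\Out_K)$ for all $K$ encodes. The delicate point is that $\mathcal{T}_{RI}$ involves the joint process of countably many tails, including how they pair up into bi-infinite trajectories and the number of trajectories, not just individual tails. I would handle this by conditioning on $\xi_K$ and coupling each of the finitely many tails hitting $K$ separately, keeping the trajectories avoiding $K$ identical.
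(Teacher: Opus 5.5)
Your architecture matches the paper's. You use the spatial Markov property at a larger set $L$, build a coupling of $\p(\cdot\mid\In_K=\eta)$ with $\p(\cdot\mid\In_K=0)$ under which $\Out_M$ agrees with high probability, and conclude from independence of $E$ from every $\In_K$. Your Aldous--Thorisson shift-coupling of the tails is an acceptable substitute for Lemma~\ref{lem:equiv_conditions_01_MC} and Corollary~\ref{cor:coincide_up_time_shift}; triviality transfers to $P_x[\cdot\mid\tau_K^+=\infty]$ by absolute continuity.

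The gap is in the step you flag as the main obstacle. ``Conditioning on $\xi_K$ and coupling each of the finitely many tails hitting $K$ separately, keeping the trajectories avoiding $K$ identical'' only compares two inside-configurations with the \emph{same} number of paths. At best it yields $\p(E\mid\In_K)=g_K(\xi_K)$ for some function $g_K$. The 0-1 law needs $\p(E\mid\In_K=\delta_\gamma)=\p(E\mid\In_K=0)$. There one copy carries an extra bi-infinite trajectory, and if the trajectories avoiding $K$ are kept identical, its tails have no partner in the other copy. This is not cosmetic: your argument never uses that $\nu$ has infinite total mass. Yet if $\nu(W^*)$ were finite, $\omega(W^*)$ would be $\sigma(\Out_K)$-measurable for every $K$ and Poisson distributed. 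It would then be a non-trivial $\mathcal{T}_{RI}$-measurable variable, however well the tails couple.

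The missing ingredient is Lemma~\ref{lem:cap_to_infty} together with Lemma~\ref{lem:dTV_Poisson}. Given $\In_K=\delta_\gamma$, resp.\ $\In_K=0$, the number of trajectories hitting $L$ is $1+Poi(\capa(L)-\capa(K))$, resp.\ $Poi(\capa(L)-\capa(K))$. These can be made equal with probability at least $1-\frac{1}{2}(\capa(L)-\capa(K))^{-1/2}$, which is close to $1$ because $\capa(L)\to\infty$. The paper then pairs the trajectories hitting $L$ across the two copies, so the $K$-hitting trajectory of $\omega$ is matched with a trajectory of $\omega'$ that hits $L$ but not $K$. It then shift-couples the tails outside $L$, which all have law $P_z[\cdot\mid\tau_L^+=\infty]$ given their entry or exit point $z$.

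In your own formulation the fix is short. One has $g_K(k)=\mathbb{E}[g_L(k+N)]$ with $N\sim Poi(\capa(L)-\capa(K))$ independent of $\In_K$. Hence $|g_K(k+1)-g_K(k)|\le d_{TV}(Poi(\lambda),Poi(\lambda)+1)\le 1/(2\sqrt{\lambda})$ with $\lambda=\capa(L)-\capa(K)\to\infty$, so $g_K$ is constant.

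Separately, you should condition on $\In_K$ rather than on $\omega|_{W_K^*}$. The latter contains the full tails of the $K$-hitting trajectories, which are part of $\Out_L$ for every $L$, so no coupling argument can give independence of $E$ from it. The family $(\In_K)_K$ already generates $\mathcal{A}$, so nothing more is needed.
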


Next we go beyond the tail-triviality hypothesis. 
Given a probability space $(\mathcal{X}, \mathcal{J}, \mathbb{Q})$, we say that 
\begin{itemize}
\item an event $A\in \mathcal{J}$ is  atomic if $\mathbb{Q}(A)>0$ and any event included in $A$ has probability either equal to that of $\mathbb{Q}(A)$ or to zero,
\item the probability space is purely atomic if there exists an at most countable collection $(A_i)_{i\in I}$ of atomic events partitioning $\mathcal{X}$.
\end{itemize}
We say that $\mathcal{T}_{MC}$ is purely atomic if $\mathcal{T}_{MC}$ is purely atomic with respect to $P_x$ for some $x$ in $\mathbb{V}$.  Note that, by irreducibilty, requiring this for some $x$ in $\mathbb{V}$ is equivalent to requiring it for all $x$'s in $\mathbb{V}$. 
Our next theorem states that, when $\mathcal{T}_{MC}$ is purely atomic, \eqref{eq:necessary_cond} is a sufficient condition for the 0-1 law to hold.
\begin{theorem}\label{th:tail_atomicity}
Assume that $\mathcal{T}_{MC}$ is purely atomic. 
Assume that for every $A, B\in\mathcal{T}_{MC}$, $\nu(A\rightarrow B)$ is either 0 or infinite. 
Then, under $\bbP$, every event in $\mathcal{T}_{RI}$ has probability either 0 or 1.
\end{theorem}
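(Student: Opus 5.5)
We reduce Theorem \ref{th:tail_atomicity} to the mechanism behind Theorem \ref{th:tail_triviality}, applied separately to each pair of atoms. Let $(A_i)_{i\in I}$ be the countable partition of $W^+$ into atoms of $\mathcal{T}_{MC}$. These atoms are the same for all $P_x$, by irreducibility and the Markov property. The sets $A_i\to A_j$, for $(i,j)\in I^2$, partition $W^*$ up to a $\nu$-null set. By the restriction property of Poisson processes, $\omega=\sum_{i,j}\omega_{ij}$, where $\omega_{ij}:=\omega\ind_{A_i\to A_j}$ are independent Poisson point processes with intensities $\nu_{ij}:=\nu\ind_{A_i\to A_j}$. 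By hypothesis each $\nu_{ij}$ has total mass $0$ or $\infty$. The pairs with mass $0$ give $\omega_{ij}=0$ almost surely and can be discarded. So only countably many independent pieces remain, each of infinite total mass.

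\textbf{Step 1: the tail $\sigma$-algebra factorizes.} We want to show that $\mathcal{T}_{RI}$ is contained, up to null sets, in the $\sigma$-algebra generated by the family of $\sigma$-algebras $\bigcap_K\sigma(\Out_K(\omega_{ij}))$. Note that $\Out_K$ acts trajectory by trajectory. Also, membership of a trajectory in $A_i\to A_j$ is determined by its two outer pieces $Out_K(w)$, because the events are shift-invariant. Hence $\Out_K(\omega)$ is, up to null sets, the tuple $(\Out_K(\omega_{ij}))_{ij}$. The difficulty is that an intersection of product $\sigma$-algebras need not equal the product of intersections. We handle this with a standard Kolmogorov-type argument. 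Since the $\omega_{ij}$ are independent, it suffices to show that the tail $\sigma$-algebra of each single $\omega_{ij}$ is trivial, and also that a tail event of the whole family is tail for each coordinate given the others. Conditioning on all the other coordinates, a $\mathcal{T}_{RI}$-event becomes a tail event for $\omega_{ij}$ alone. If each such tail is trivial, every $\mathcal{T}_{RI}$-event is independent of each finite collection $\{\omega_{ij}\}$, and hence of all of $\omega$. So its probability is $0$ or $1$.

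\textbf{Step 2: 0-1 law for one piece $\omega_{ij}$.} This is where we reuse the proof of Theorem \ref{th:tail_triviality}. There, tail-triviality of $X$ should be used as follows. Conditionally on $\Out_K$, the inside pieces are independent bridges, and their law depends only on the entrance and exit points. A tail event can therefore be approximated by functions of $\Out_K$ for larger and larger $K$. Tail-triviality then makes the law of the outer parts asymptotically insensitive to the finite configuration inside $K$. Restricted to the atom pair $(A_i,A_j)$, the conditioned chain $P_x[\cdot\mid A_j]$ (an $h$-transform) is tail-trivial, since $A_j$ is an atom, and similarly for the backward part with $A_i$. So the same argument runs inside each class. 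The infinite total mass is what excludes the obstruction of Example \ref{ex:simple_not_01}. A finite-mass piece would consist of almost surely finitely many trajectories, whose number is a nontrivial tail variable. With infinite mass, for every finite $K$ almost all trajectories of $\omega_{ij}$ avoid $K$. Conditioning on a local configuration then changes only finitely many trajectories among infinitely many. The shift-invariant global data, such as counts, are infinite almost surely and carry no information.

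\textbf{Main obstacle.} The delicate step is Step 2, specifically making precise why infinite intensity plus atomicity gives triviality. I would first try a coupling argument. Given two configurations that differ in finitely many trajectories of $\omega_{ij}$, we couple them so that, outside a large $K$, they agree up to a null modification. This uses two facts: a trajectory of class $(i,j)$ can be replaced by another of the same class that eventually coincides with it, by tail-triviality of the $h$-transformed chain; and the infinite supply of trajectories absorbs the mismatch in the count. If the coupling approach stalls, the fallback is to prove directly that $\mathbb{P}(E\mid \sigma(\In_K,\Out_K))$ does not depend on $\In_K$, and then take $K\to\mathbb{V}$ with a martingale convergence argument.
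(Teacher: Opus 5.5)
Your proposal is correct and follows the paper's route. The paper also splits $\omega$ into the independent pieces $\omega_{|A\to B}$ over pairs of atoms. For each piece of infinite mass it reruns the tail-trivial coupling: match the counts of trajectories hitting a large $L$ using $\capa_{A\to B}(L)\to\infty$ and Lemma~\ref{lem:dTV_Poisson}, pair the trajectories arbitrarily, and make their outer pieces coincide eventually via Corollary~\ref{cor:coincide_up_time_shift} for the chains conditioned on $A$ and on $B$. That is exactly the coupling you sketch in Step 2, so it does not stall and no fallback is needed.

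Your Step 1 goes slightly further than the paper. Its section argument recovers the 0-1 law for $\omega$ from that of each piece. The paper only asserts this step (``and so does their sum'').
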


\begin{remark}
  The conditon \eqref{eq:necessary_cond} can also be formulated as: for every $A, B\in \mathcal{T}_{MC}$, as soon as $P_x(A)>0$ and $P_x(B)>0$ for some $x\in \mathbb{V}$ (equivalently, for all $x$'s in $\mathbb{V}$), the quantity $\nu(A\to B)$ must be infinite.

We observe that
$\nu(A\rightarrow B)=\lim_{K\to \mathbb{V}} \nu(A\rightarrow B, W_K^*)$ and that the quantity appearing in this limit can be expressed as
\begin{equation}
\nu(A\rightarrow B, W_K^*)=\sum_{x\in \partial_X K} e_K(x) P_x[ A | \tau_K^+=\infty] P_x[B].
\end{equation}
\end{remark}

\subsection{Quantitative criterion}

We further provide a quantitative criterion for the occurrence of the 0-1 law for $\mathcal{T}_{RI}$.
For every finite vertex set $K$, let us introduce the hinge measure on $K$, the measure on $\mathbb{V}\times \mathbb{V}$ defined by:
\begin{equation}\label{eq:def_hinge_meas}
h_K(x, y)= e_K(x) \times P_x[X_{\lambda_K}= y]\, ,
\end{equation}
which is supported by $\partial_X K \times \partial_X K$ and symmetric (by reversibility of $X$), whose marginals are both equal to the equilibrium measure $e_K$ and whose total mass is equal to $\capa(K)$.

\begin{theorem}\label{th:quantitative_criterion}
  The 0-1 law for $\mathcal{T}_{RI}$ holds if and only if for every site $x\in \mathbb{V}$, for every $\epsilon>0$, as $L\to \mathbb{V}$,
  \begin{equation}
    \sum_{x', y'\in \partial_X L} h_L(x', y') \left(P_{x'}[\tau_x < \infty | X_{\lambda_L}=y']-\epsilon \right)_+ \to 0.
  \end{equation}
\end{theorem}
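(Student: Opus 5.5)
We plan to reduce the 0-1 law for $\mathcal{T}_{RI}$ to a statement about the conditional law of the inside pieces given the outside pieces. Fix a finite $K$ and an exhaustion $L\to\mathbb{V}$ with $K\subset L$. Since $\mathcal{T}_{RI}\subset\sigma(\Out_L)$ for every $L$, a standard martingale/backward argument shows that the 0-1 law holds if and only if, for every finite $K$, the $\sigma$-algebra $\sigma(\In_K)$ (together with the trajectories' behavior in $K$) is asymptotically independent of $\sigma(\Out_L)$ as $L\to\mathbb{V}$. That is, $\sup_{F\in\sigma(\omega|_{W_K^*})}|\bbP[F\mid\Out_L]-\bbP[F]|\to0$ in $L^1$. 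Indeed, if $T\in\mathcal{T}_{RI}$, then $\bbP[T\mid \mathcal{F}_K]$ (local information near $K$) is determined by $\Out_L$ for every $L$; asymptotic independence then forces $T$ to be independent of every local $\sigma$-algebra, hence of itself. The converse also holds: $\bbP[F\mid\Out_L]$ is a reverse martingale converging to $\bbP[F\mid\mathcal{T}_{RI}]$, which equals $\bbP[F]$ under triviality.

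Next we make the conditional law explicit. Given $\Out_L$, the trajectories hitting $L$ are described by their entrance/exit pairs $(x',y')\in\partial_XL\times\partial_XL$, and their numbers are Poisson with intensity $h_L(x',y')$. Conditionally on $\Out_L$, the inside pieces are independent, and each is distributed as a chain started at $x'$ conditioned on $X_{\lambda_L}=y'$, cut at its last visit to $L$; this follows from the Markov property at $\lambda_L$ and the definition of $\nu_L$. The unconditional law, by contrast, is the mixture over $(x',y')$ with weights $h_L$. The events near $K=\{x\}$ are essentially generated by whether, and how, the inside pieces hit $x$. The key quantity is thus the conditional hitting probability $q_L(x',y'):=P_{x'}[\tau_x<\infty\mid X_{\lambda_L}=y']$. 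Its $h_L$-average equals $\capa(\{x\})$ by \eqref{eq:consistency_eq_meas}, while conditionally the number of trajectories hitting $x$ is a sum of independent Bernoulli$(q_L)$ variables over a Poisson collection.

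For sufficiency, assume the stated convergence. Then the conditional configuration near $x$ is a superposition of Poisson-many pieces, each hitting $x$ with probability at most $\epsilon$, except for a set of total expected contribution $o(1)$. By a Poisson approximation (Le Cam's inequality), the number of pieces hitting $x$, together with their post-hitting behavior (which by the strong Markov property is approximately the harmonic-measure-based law as pieces with small $q_L$ average out), is close in total variation to a Poisson process with intensity $\sum h_L q_L$. This sum concentrates around its mean $\capa(\{x\})$ because its variance is bounded by $\epsilon\capa(\{x\})$. We then extend from single sites to finite $K$ by a union bound over $x\in K$, obtaining the asymptotic independence of the first paragraph. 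For necessity, if the sum stays above $\delta$ along some exhaustion, then with positive probability some pieces hit $x$ with conditional probability $>\epsilon$. In that case the conditional variance of $\bbP[\I\ni x\mid\Out_L]$ stays bounded below, so $\bbP[x\in\I\mid\mathcal{T}_{RI}]$ is non-constant and the 0-1 law fails.

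The main obstacle is the sufficiency step. We must control the full local configuration, not just counts, and we must justify that conditioning on many small-probability pieces yields the correct Poisson intensity measure $\nu_K$ on $W_K^*$. We plan to handle this with a Poisson thinning/Le Cam argument applied to marked points, where the marks are the inside pieces restricted after $\tau_K$, together with a second-moment bound on the random intensity.
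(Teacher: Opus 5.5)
Your argument is correct in substance, but it conditions in the opposite direction from the paper's proof.

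\textbf{The paper's route.} The paper conditions forward. Via Lemmas~\ref{lem:suf_cond_0_1_law_distance} and~\ref{lem:criterion_dTV_to_0_law} (the latter through the maximal-coupling Lemma~\ref{lem:general_equiv_01law}), it shows that the 0-1 law is equivalent to $d_{TV}(\bbP(\Hin_L\in\cdot\mid\Hin_K=\delta_{(x,y)}),\bbP(\Hin_L\in\cdot\mid\Hin_K=0))\to0$. It reads this as ``a Poisson process plus one independent point versus the Poisson process''. Lemma~\ref{lem:general_cond_dTVto0_for_PPP} then characterizes when such distances vanish, using tilting plus Lemma~\ref{lem:dTV_Poisson} for the upper bound and a compactness argument for the lower bound. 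Finally, reversibility rewrites the law of the extra point as $h_L(x',y')/h_K(x,y)$ times a conditional hitting probability.

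\textbf{Your route.} You condition backward: given $\Out_L$, the configuration inside $K$ is an independent marked Bernoulli thinning of $\Hin_L$. Sufficiency then follows from Le Cam's inequality plus a second-moment bound on the random intensity, whose mean comes from \eqref{eq:consistency_eq_meas} and whose variance is $\sum h_L q_L^2$. Necessity reduces to the explicit functional $\bbP[x\notin\I\mid\Out_L]=\prod_i(1-q_L(x_i',y_i'))$ plus reverse-martingale convergence along an exhaustion. That convergence is legitimate because $\Out_{L'}$ is a function of $\Out_L$ for $L\subset L'$.

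\textbf{Comparison.} Your route is more elementary: no maximal coupling, no compactness lower bound, and no reversibility beyond \eqref{eq:consistency_eq_meas}. It also gives a byproduct: the 0-1 law for $\mathcal{T}_{RI}$ holds as soon as $\bbP[x\in\I\mid\mathcal{T}_{RI}]$ is a.s.\ constant for every $x$. The paper's forward formulation is the one it reuses for Theorems~\ref{th:weak_01law} and~\ref{th:increasing_01law}.

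\textbf{Points to tighten.} Three points need attention when you write this out.

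\emph{(a) The $\sigma$-algebra in the uniform statement.} Your uniform asymptotic independence must be stated for $\sigma(\In_K)$ (equivalently $\sigma(\Hin_K)$, by the spatial Markov property), not for $\sigma(\omega|_{W_K^*})$. The outside-$L$ pieces of the trajectories hitting $K$ lie among the finitely many pieces recorded in $\Out_L$, so that supremum is typically $1$. Since $\sigma(\In_K)$ increases with $K$ and generates $\mathcal{A}$, nothing is lost by the restriction.

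\emph{(b) Extending from singletons to a general finite $K$.} A union bound over singletons does not by itself control the joint law of $\In_K$. The union bound should enter only through $q^K_L(x',y'):=P_{x'}[\tau_K<\infty\mid X_{\lambda_L}=y']\le\sum_{z\in K}P_{x'}[\tau_z<\infty\mid X_{\lambda_L}=y']$, inside the Le Cam error $\sum_i (q^K_L(x_i',y_i'))^2$ and inside the variances. The marks are then harmless. By the strong Markov property at $\tau_K$ and the last-exit decomposition at $\lambda_K$, given the hinge $(x,y)$ on $K$, the inside path has law $\kappa_{x,y}:=P_x[(X_0,\dots,X_{\lambda_K})\in\cdot\mid X_{\lambda_K}=y]$ whatever $(x',y')$ is. So the random intensity is $\sum_{(x,y)}R_L(x,y)\,\kappa_{x,y}$, with finitely many random coefficients $R_L(x,y)$. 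Each has mean $h_K(x,y)$ and variance at most $\sum h_L(q^K_L)^2\to0$.

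\emph{(c) The variance bound in the necessity part.} ``With positive probability some piece hits $x$ with probability $>\epsilon$'' is not enough. If such pieces were present almost surely and in large numbers, $\bbP[x\in\I\mid\Out_L]$ could be close to $1$ almost surely. You also need two facts.
\begin{itemize}
\item The number $N$ of such pieces is Poisson with mean in $[\delta,\capa(\{x\})/\epsilon]$, so $\bbP(N=0)$ is bounded below.
\item The product over these pieces is independent of the product over the remaining pieces, and the latter has mean $\exp(-\sum h_Lq_L\ind_{q_L\le\epsilon})\ge e^{-\capa(\{x\})}$.
\end{itemize}
Then the variance of $\bbP[x\notin\I\mid\Out_L]$ is at least $e^{-2\capa(\{x\})}$ times the variance of the first product, and that variance is bounded below uniformly in $L$.
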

\begin{remark}
We stress that, due to \eqref{eq:consistency_eq_meas},
\begin{equation}
    \sum_{x', y'\in \partial_X L} h_L(x', y') P_{x'}[\tau_x < \infty | X_{\lambda_L}=y']
    =  \sum_{x'\in L} e_L(x')  P_{x'}[\tau_x<\infty]
    = Cap(\{x\}).
  \end{equation}
The condition in the theorem can therefore be interpreted as the fact that this sum is ``diluted'' enough, that there is not too much mass supported on some couples $(x',y')$.  

Using the bounds $b \ind_{a > 2b} \le (a-b)_+ \le \ind_{a > b}$ for $a \in [0, 1], b \ge 0$, the condition in the theorem can be equivalently formulated as follows: for every site $x\in \mathbb{V}$, for every $\epsilon>0$, as $L\to \mathbb{V}$,
  \begin{equation}
    \sum_{x', y'\in \partial_X L} h_L(x', y') \ind_{P_{x'}[\tau_x < \infty | X_{\lambda_L}=y'] >\epsilon} \to 0.
  \end{equation}
\end{remark}

\subsection{A weak 0-1 law}

Let us also introduce another $\sigma$-algebra, focusing only on the future pieces of trajectory. For every finite vertex set $K$, we define a random variable $\Out_{K\rightarrow} : \omega \mapsto \Out_{K\rightarrow}(\omega)$ on $(\Omega, \mathcal{A})$ as follows. For any $\omega=\sum_{i\ge 1} \delta_{w_i^*}$ in $\Omega$, we pick for each $i\ge 1$ a representative $w_i$ of $w_i^*$ and denote
\begin{equation}
\Out_{K\rightarrow}(\omega)=\sum_{i\in I : w_i^*\in W_K^*} \delta_{Out_{K\rightarrow}(w_i)} + \sum_{i\in I :  w_i^*\notin W_K^*} \delta_{w_i^*} \, .
\end{equation}
The $\sigma$-algebra of non-local future events is defined as
\begin{equation}
  \mathcal{T}_{RI\rightarrow} := \bigcap_{K\in \mathcal{P}_f(\mathbb{V})} \sigma(\Out_{K\rightarrow}) \, .
\end{equation}
This $\sigma$-algebra is a priori not so interesting because it lacks much information. We mention it however, since it has the nice property that it satisfies the 0-1 law in full generality. 
\begin{theorem}\label{th:weak_01law}
Under $\bbP$, every event in $\mathcal{T}_{RI\rightarrow}$ has probability either 0 or 1.
\end{theorem}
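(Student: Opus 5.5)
Fix an exhaustion $(K_n)_{n\ge 0}$ and work with $\mathcal{G}_n:=\sigma(\Out_{K_n\rightarrow})$. This is a decreasing family, since $\Out_{K_{n}\rightarrow}$ can be recovered from $\Out_{K_{n+1}\rightarrow}$: trajectories hitting $K_n$ also hit $K_{n+1}$, and the future after the last visit to $K_n$ is a suffix of the future after the last visit to $K_{n+1}$; trajectories avoiding $K_{n+1}$ are kept whole. Cofinality then gives $\mathcal{T}_{RI\rightarrow}=\bigcap_n \mathcal{G}_n$. To show that an event $E\in\mathcal{T}_{RI\rightarrow}$ has $\bbP(E)\in\{0,1\}$, it suffices by the usual Kolmogorov-type argument to prove that $E$ is independent of $\sigma(\omega|_{W^*_{K}})$ for every finite $K$. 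Here $\omega|_{W^*_K}$ denotes the restriction of the point process to trajectories hitting $K$. The $\sigma$-algebras $\sigma(\omega|_{W^*_{K_m}})$ increase and generate $\mathcal{A}$, so $E$ would then be independent of itself.

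The central step uses the Poisson structure with intensity $\nu_L$ for large $L=K_n\supset K$. I decompose $\omega=\omega|_{W^*_L}+\omega|_{(W^*_L)^c}$; the two parts are independent, and the second is measurable with respect to $\mathcal{G}_n$. For the part hitting $L$, I use the sampling description of $\nu_L$: each trajectory consists of an entrance point $x_i\sim \harm_L$, a backward path under $P_{x_i}[\cdot\mid\tau_L^+=\infty]$, and a forward path under $P_{x_i}$. Cutting the forward path at $\lambda_L$, the Markov property at the last exit time (a standard path decomposition) makes the post-$\lambda_L$ piece, given $X_{\lambda_L}=y$, distributed as $P_y[\cdot\mid\tau_L^+=\infty]$, and independent of the pre-$\lambda_L$ piece. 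Consequently, conditionally on the point process of exit points $\sum_i\delta_{X^i_{\lambda_L}}$ (which is Poisson with intensity $e_L$, by symmetry of the hinge measure), the future pieces are independent of everything inside and before $L$.

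The consequence is as follows. Let $F$ be a bounded function of $\omega|_{W^*_K}$. Since trajectories hitting $K$ hit $L$, $F$ is a function of the insides and pasts of $\omega|_{W^*_L}$, together with the future pieces through their behaviour before leaving far away. I would instead approximate: $F$ is determined by the trajectories hitting $K$ up to finitely many steps, which can be approximated in $L^1$ by functions $F_M$ of the pieces of trajectories hitting $K$ restricted to a large finite window. I would then choose $L$ so large that, with high probability, all these windows lie before $\lambda_L$. This is possible by transience: the number of trajectories hitting $K$ is finite, and each leaves every finite set.

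Then $\E[F_M\ind_E]=\E[F_M\,\bbP(E\mid \text{exit points},\omega|_{(W_L^*)^c})]$, up to an error term. The remaining obstacle is that the exit-point configuration itself may carry information about $E$ and may be correlated with $F_M$. To handle it, I would show that the exit configuration on $\partial_X L$ becomes asymptotically independent of the $K$-window data. More precisely, I aim for a total-variation bound: conditionally on the $K$-window data, the law of $\sum\delta_{X_{\lambda_L}}$ is close to its unconditioned law. This holds because only finitely many trajectories, those hitting $K$, are affected, and by the Poisson property the remaining ones contribute an independent Poisson$(e_L\text{-type})$ field. I expect this step to be the main difficulty. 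The trick I would try first is to condition on the trajectories hitting $K$: their exit points from $L$ are a.s. finitely many, and their futures are i.i.d. $P_y[\cdot\mid\tau_L^+=\infty]$ given these points. I would then argue that $E$, being in $\mathcal{G}_{n'}$ for all $n'$, cannot depend on a finite number of added forward trajectories started at specified points. This last claim should follow from a Lévy 0-1/coupling argument using the fact that $E$ is $\mathcal{T}_{RI\rightarrow}$-measurable, letting $n'\to\infty$.
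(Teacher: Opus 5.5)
Your reduction is sound and follows the paper's scheme, up to the time reversal of Remark \ref{rem:backward_weak}. An event in $\sigma(\Out_{L\rightarrow})$ sees the trajectories hitting $L$ only through their futures, and these are conditionally independent of everything else given the exit points. So everything comes down to showing that conditioning on the trajectories hitting $K$ changes the law of the exit-point process on $L$ by an amount tending to $0$ in total variation. (Small slip: it is $\Out_{K_{n+1}\rightarrow}$ that is a function of $\Out_{K_n\rightarrow}$, since the future after $\lambda_{K_{n+1}}$ is a suffix of the future after $\lambda_{K_n}$. Your conclusion that the $\mathcal{G}_n$ decrease is still correct.)

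The gap is that this total-variation step, which is the entire content of the theorem, is not proved, and the reasons you give cannot prove it.

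\begin{itemize}
\item Your argument that only finitely many trajectories are affected and the rest form an independent Poisson field is not enough. A Poisson process with intensity $e_L^K(x')=e_L(x')P_{x'}[\tau_K=\infty]$ plus an independent point with law $p_L^{K,y}=P_y[X_{\lambda_L}\in\cdot\mid\tau_K^+=\infty]$ is close to the Poisson process alone if and only if $\sum_{x'}(p_L^{K,y}(x')-\epsilon\,e_L^K(x'))_+\to0$ for every $\epsilon>0$ (Lemma \ref{lem:general_cond_dTVto0_for_PPP}). In words, the extra point must asymptotically land where the intensity dominates it.
\item The same heuristic applied to hinge couples would give the 0-1 law for $\mathcal{T}_{RI}$, which is false. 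In Example \ref{ex:simple_not_01} with $K=\{0\}$ and $L=[-n,n]$, the extra hinge couple is $(-n,n)$ with probability $1/4$, whereas $\Hin_L$ given $\Hin_K=0$ never charges $(-n,n)$.
\item Your fallback, that $E$ ``cannot depend on finitely many added forward trajectories'' by a L\'evy/coupling argument, is circular. The coupling it requires is precisely this estimate.
\end{itemize}

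The missing ingredient is a dilution lemma specific to one-sided tails (Lemma \ref{lem:quantitative_weak} in the paper).

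By reversibility, $p_L^{K,y}(x')=e_L(x')P_{x'}[\tau_K<\infty,X_{\tau_K}=y]/e_K(y)$. Using $P_{x'}[\tau_K<\infty]\le\sum_{z\in K}P_{x'}[\tau_z<\infty]$, it then suffices to show $e_L(D)\to0$ for $D=\{x':P_{x'}[\tau_z<\infty]>\delta\}$, for each $z\in K$ and $\delta>0$.

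This follows from two facts:
\begin{itemize}
\item $e_L(D)\le\frac{a_z}{\delta}P_z[X_{\lambda_L}\in D]$, by reversibility again.
\item $D$ is a transient set. Cut the path into disjoint excursions of length $t_y$, started at successive visits $y$ to $D$, where $t_y$ is chosen with $P_y[\tau_z<t_y]\ge\delta/2$. Each excursion hits $z$ with probability at least $\delta/2$, while $z$ has a finite expected number of visits. Hence $P_z[X_{\lambda_L}\in D]\to0$ as $L\to\mathbb{V}$.
\end{itemize}
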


\begin{remark}\label{rem:backward_weak}
  By reversibility of the Markov chain $X$, reversing time for all the trajectories in $\omega$ does not change the law of $\omega$ under $\bbP$.
  Thus, we obtain also the 0-1 law for the $\sigma$-algebra of non-local past events, defined by focusing on the past pieces of trajectory $Out_{\rightarrow K}(w)$ instead of the future pieces $Out_{K\rightarrow}(w)$.
\end{remark}

\subsection{A 0-1 law for increasing non-local events}

We now present two other ways to weaken the 0-1 law discussed in Section \ref{sec:intro_01laws}, that may lead to 0-1 laws holding under more general assumptions.

\smallskip

First, one could ask only for the 0-1 law to hold for increasing events belonging to $\mathcal{T}_{RI}$. We observe that condition \eqref{eq:necessary_cond} remains necessary for this modified $0-1$ law to hold. Indeed, if there exist events $A$ and $B$ in $\mathcal{T}_{MC}$ such that $\nu(A\to B) \in (0, \infty)$ then for any positive integer $k$, the event $\{\omega(A\to B ) \ge k\}$ is an increasing event in $\mathcal{T}_{RI}$ which has probability different from 0 and from 1.
So this modified 0-1 law does not hold in full generality, and, in the case where $\mathcal{T}_{MC}$ is tail-atomic, it is in fact equivalent to the 0-1 law for $\mathcal{T}_{RI}$. 

\smallskip

Secondly, we consider the possibility of forgetting the pairings of the pieces of trajectories when defining the trace of configuration $\omega$ outside $K$.
We set, for $\omega=\sum_{i\ge 1} \delta_{\omega_i^*}$, picking for each $i\ge 1$ any representative $w_i$ of $w_i^*$,
\begin{equation}
\widetilde{\Out}_K(\omega):=\sum_{i\ge 1: w_i^*\in W_K^*} \delta_{Out_{\to K}(w_i)} + \sum_{i\ge 1: w_i^*\in W_K^*} \delta_{Out_{K\to}(w_i)} + \sum_{i\in I : w_i^*\notin W_K^*} \delta_{w_i^*} \, 
\end{equation}
and define 
$$\widetilde{\mathcal{T}}_{RI} := \bigcap_{K \in \mathcal{P}_f(\mathbb{V})} \sigma\left(\widetilde{\Out}_K\right). $$
Once again, the 0-1 law for $\widetilde{\mathcal{T}}_{RI}$ does not hold in full generality. For example, we observe that if there exists an event $A\in \mathcal{T}_{MC}$ such that, with $ A^c= W^+\setminus A$, we have $\omega(A\to  A^c)\in (0, \infty)$, then the variable 
$$\omega(A \to  W^+)-\omega( W^+ \to A) = \omega(A \to  A^c)-\omega( A^c \to A)$$ 
is mesurable with respect to $\widetilde{\mathcal{T}}_{RI}$ and non-trivial. 

\medskip

Our final theorem states that, remarkably, combining the two approaches we just explored yields a 0-1 law that holds in full generality.

\begin{theorem}\label{th:increasing_01law}
  Under $\p$, any increasing event in $\widetilde{\mathcal{T}}_{RI} $ has probability either 0 or 1.
\end{theorem}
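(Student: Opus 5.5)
The strategy is to show that an increasing event $A \in \widetilde{\mathcal{T}}_{RI}$ is positively correlated with a copy of itself living on an independent, ``rewired'' configuration that has the same outside trace. From this we will deduce $\p[A]^2 \ge \p[A]$, hence $\p[A]\in\{0,1\}$, via the FKG inequality (Theorem \ref{th:FKG}). More precisely, we aim for the key identity
\[
\p[A] = \p[A\cap A'] \quad\text{and}\quad \p[A\cap A'] \le \p[A]^2 + o(1),
\]
where $A'$ is the event $A$ evaluated on a configuration that asymptotically decouples from $\omega$.

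\textbf{Step 1: a superposition decomposition.} Fix a finite set $K$. Split $\omega = \omega^1 + \omega^2$, where $\omega^1$ and $\omega^2$ are independent interlacement processes at level $1/2$ (Remark \ref{rem:full_interlacement_process}). Consider $\widetilde{\Out}_K(\omega)$. It is the sum of $\widetilde{\Out}_K(\omega^1)$ and $\widetilde{\Out}_K(\omega^2)$, because forgetting the pairings makes $\widetilde{\Out}_K$ additive in $\omega$. This additivity is exactly where the tilde matters: with $\Out_K$ one cannot rearrange pairings. Since $A\in\sigma(\widetilde{\Out}_K)$, we can write $1_A(\omega)=F_K(\widetilde{\Out}_K(\omega^1)+\widetilde{\Out}_K(\omega^2))$.

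\textbf{Step 2: a resampling coupling.} Given the outside pieces, the way a configuration pairs incoming halves $Out_{\to K}$ with outgoing halves $Out_{K\to}$ and fills in the inside pieces $Ins_K$ is a conditional law. I would construct a second configuration $\tilde\omega$ that has the same law as $\omega$ and satisfies $\widetilde{\Out}_K(\tilde\omega)=\widetilde{\Out}_K(\omega)$. The plan is to show that, as $K\to\mathbb{V}$, the configurations $\omega$ and $\tilde\omega$ can be chosen so that their restrictions to any fixed finite $K_0$ become asymptotically independent. Then $A$, which is measurable with respect to $\tilde\omega$ through $\widetilde{\Out}_K$, is approximable by local events of $\tilde\omega$ independent of the local events of $\omega$. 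Approximating $A$ by a local event $B$ of $\omega$ in $L^1$, we would obtain $\p[A]=\p[A\cap\{\tilde\omega\in A\}]\approx \p[B]\,\p[A]$, which gives the 0-1 law.

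\textbf{Step 3: using monotonicity.} The main obstacle is Step 2 in general: the pairing information need not decouple, as Example \ref{ex:simple_not_01} shows. This is where monotonicity enters. Rather than insisting on exact equality of outside traces, I would use the inequality $\widetilde{\Out}_K(\omega^1)\le\widetilde{\Out}_K(\omega)$, together with the fact that, since $A$ is increasing, $\{\omega^1\in A_K\}\subset A$ in a suitable sense, where $A_K$ is a cylinder-type event. Combining this with FKG (Theorem \ref{th:FKG}) applied to the independent pieces $\omega^1,\omega^2$, the aim is to show that for large $K$ the trajectories of $\omega$ hitting $K$ carry asymptotically negligible weight in $\widetilde{\Out}_K$, relative to a fresh independent process. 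This step uses additivity and the Poisson thinning structure, with $\capa(K)$ finite and $\nu$ infinite. Concretely, I would try to prove $\p[A]=\p[A\mid \omega|_{W_{K_0}^*}]$ almost surely for each finite $K_0$. Comparing conditional expectations, increasing in the local data, by sandwiching between the configurations with all and with none of the local trajectories, then yields a Kolmogorov-type 0-1 law, since $\mathcal{A}$ is generated by the local restrictions. The hard part will be controlling the sandwich. My first attempt would be to show that, for a fixed local configuration, the effect of adding or removing finitely many trajectories through $K_0$ on $\widetilde{\Out}_K$ can be absorbed by an arbitrarily small change of the level $u$ (Remark \ref{rem:full_interlacement_process}). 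I would then use the continuity in $u$ of $\p^u[A]$ off a countable set.
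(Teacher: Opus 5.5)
There is a genuine gap, and it sits exactly at what you call the hard part.

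\textbf{Steps 1--2 cannot carry the proof.} A resampled $\tilde\omega$ with $\widetilde{\Out}_K(\tilde\omega)=\widetilde{\Out}_K(\omega)$ and asymptotically independent local data would give the 0-1 law for \emph{all} of $\widetilde{\mathcal{T}}_{RI}$. That is false: in Example \ref{ex:simple_not_01}, the number of trajectories going from $-\infty$ to $+\infty$ minus the number going from $+\infty$ to $-\infty$ is $\widetilde{\mathcal{T}}_{RI}$-measurable and non-degenerate. You acknowledge this yourself. Two smaller points: FKG only gives lower bounds on $\p[A\cap A']$, never the upper bound you need. And $\Out_K$ is just as additive as $\widetilde{\Out}_K$; what the tilde buys is the freedom to re-pair past and future halves.

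\textbf{What survives is the sandwich of Step 3.} The bound $\p(A\mid\In_K=\eta)\ge\p(A\mid\In_K=0)$ by domination is correct. So is the Kolmogorov-type conclusion, which is the paper's final step. You should condition on $\In_K$ rather than on whole trajectories, so that the outside halves of the trajectory through $K$ stay random and can be matched.

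\textbf{The upper bound via a level change does not work at a fixed level.} Absorbing the extra trajectory into $\omega^{u+\delta}$ can at best give $\p^u(A\mid\In_K=\eta)\le\lim_{\delta\downarrow0}\p^{u+\delta}(A\mid\In_K=0)$. That yields the 0-1 law only for $u$ outside a countable set depending on $A$, and this set may contain $u=1$. Here is an example. Fix an exhaustion $(L_n)$. Under $\p^u$ the counts $\omega(W^*_{L_n})$ are a Poisson process read at times $u\capa(L_n)\to\infty$ (Lemma \ref{lem:cap_to_infty}), so $\omega(W^*_{L_n})/\capa(L_n)\to u$ almost surely. Set $A'=\{\liminf_n\omega(W^*_{L_n})/\capa(L_n)>1\}$.
\begin{itemize}
\item $A'$ is increasing.
\item $A'$ lies in $\widetilde{\mathcal{T}}_{RI}$, since for $L_n\supseteq K$ the count is read off $\widetilde{\Out}_K$.
\item $\p^u(A')=\ind_{\{u>1\}}$.
\end{itemize}
So at $u=1$ your sandwich reads only $0\le\p(A'\mid\In_K=\eta)\le 1$. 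Since $\nu$ has infinite mass, $\p^u$ and $\p^{u+\delta}$ are mutually singular on $\widetilde{\mathcal{T}}_{RI}$, so no continuity at a prescribed level can be expected.

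\textbf{The absorption step is also unproved.} It needs $P_x[X_{\lambda_L}\in\cdot\mid\tau_K^+=\infty]$ to be diluted with respect to $e_L$. That is the content of Lemma \ref{lem:quantitative_weak} (transience of $\{y: P_y[\tau_x<\infty]>\epsilon\}$), which your proposal does not supply.

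\textbf{The missing idea is to absorb the extra trajectory at the same level, inside the configuration itself, by rewiring.}
\begin{itemize}
\item Take $\omega'\sim\p(\cdot\mid\In_K=0)$. By \eqref{eq:criterion_d_TV_cond_bis}, a consequence of Lemma \ref{lem:quantitative_weak}, pick a trajectory $w_1^*$ of $\omega'$ whose entry point into $L$ is distributed, up to $\epsilon$ in total variation, like the entry point of the trajectory through $\gamma$. Pick another trajectory $w_2^*$ playing the same role for the exit point.
\item Splice the past of $w_1$, a path inside $L$ leaving trace $\gamma$ on $K$, and the future of $w_2$ into a trajectory $w$. Then $\omega=\omega'-\delta_{w_1^*}-\delta_{w_2^*}+\delta_{w^*}$ is $\epsilon$-close in law to $\p(\cdot\mid\In_K=\delta_\gamma)$.
\item Let $w_{add}$ consist of the past of $w_2$ and the future of $w_1$. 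Then $\widetilde{\Out}_L(\omega+\delta_{w_{add}^*})=\widetilde{\Out}_L(\omega')$; this is where forgetting the pairings is used.
\item Since $E$ is increasing, $\omega\in E$ implies $\omega+\delta_{w_{add}^*}\in E$, hence $\p(E\mid\In_K=\delta_\gamma)\le\p(E\mid\In_K=0)+O(\epsilon)$. This is where monotonicity is used: it discards the single spare trajectory $w_{add}^*$.
\end{itemize}
Induction on the number of paths in $\eta$, together with your domination argument, then finishes the proof with no change of level.
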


\begin{remark}
  Of course, by taking complements, also decreasing events in $\widetilde{\mathcal{T}}_{RI} $ have probability 0 or 1 under $\p$.
\end{remark}

\section{Preliminaries for the proof of the 0-1 laws}

\subsection{Hinge decomposition of the interlacement process}\label{sec:hinge_decomp}

Given a finite vertex set $K$ and a trajectory $w\in W_K$, recall the definition of the times $\tau_K(w)$ and $\lambda_K(w)$ in \eqref{eq:def_ta_K_lambda_k_w} and define the hinge couple of $w$ as:
\begin{equation}
  Hin_K(w):= (w_{\tau_K(w)}, w_{\lambda_K(w)}) .
\end{equation}
We define the \emph{hinge process} $\Hin_K(\omega)$ associated to $\omega=\sum_{i\ge 1} \delta_{w_i^*}$ by picking for each $i \ge 1$ any representative $w_i$ of $w_i^*$ and setting:
\begin{equation}
  \Hin_K(\omega)=\sum_{i\ge 1 : w_i^*\in W_K^*} \delta_{Hin_K(w_i)}.
\end{equation}
Under $\p$, the random variable $\Hin_K$ is distributed as a Poisson point process with intensity measure the hinge measure $h_K$, introduced in \eqref{eq:def_hinge_meas}. 

Let us describe how to sample $\omega$ under $\p$ with respect to the decomposition along the hinge process. 
To sample the trajectories touching $K$,
\begin{itemize}
\item sample first the hinge process, as a Poisson point process with intensity $h_K$.
\item to each hinge couple $(x, y)$, attach a trajectory $w^*$ which decomposes as $Ins_K(w)$ and $Out_K(w)=(Out_{K\rightarrow} (w), Out_{\rightarrow K}(w))$:
\begin{itemize}
\item $Ins_K(w)$ is sampled according to $P_x[ (X_0, X_1, \dots, X_{\lambda_K})\in \cdot | X_{\lambda_K}=y]$ 
\item $Out_{K\rightarrow} (w)$ is obtained by applying a time reversal to a trajectory sampled according to $P_x[ \cdot |  \tau^+_K=\infty]$,
\item $Out_{\rightarrow K}(w)$ is sampled according to $P_y[ \cdot | \tau^+_K=\infty]$.
\end{itemize} 
\end{itemize}
Complete $\omega$ by sampling the trajectories that do not touch $K$, as usual as a Poisson point process with intensity measure $\nu$ restricted to $W^*\setminus W_K^*$.

\smallskip

We observe that the interlacement process satisfies a spatial Markov property: under $\p$, $\Out_K$ depends on $\In_K$ only through the hinge process $\Hin_K$. Let us thus describe how to go from a finite vertex set K to a larger finite vertex set L: this will be at the heart of our arguments for the 0-1 law.
To sample $\In_L$ given $\In_K$, independently
\begin{itemize}
  \item extend each path in $\In_K$, say with extremities $(x,y)$, by adding two pieces of trajectory:
\begin{itemize}
  \item on the left, one obtained by applying a time reversal to a trajectory sampled according to $P_x[ (X_0, X_1, \dots, X_{\lambda_{L}})\in \cdot |  \tau_{K}^+=\infty]$,
  \item on the right, one sampled according to $P_y[ (X_0, X_1, \dots, X_{\lambda_{L}})\in \cdot |  \tau_{K}^+=\infty]$,
\end{itemize}
\item sample additional trajectories inside $L$ that do not touch $K$ by sampling biinfinite trajectories under $\nu$ restricted to $W_L^*\setminus W_K^*$ and keeping only the pieces of trajectories inside $L$.
\end{itemize}


\subsection{Coupling results}

For $\mu$ and $\nu$ probability measures on an at most countable state space $\mathcal{X}$ (equipped with the $\sigma$-algebra consisting of all its subsets), we denote by $d_{TV}(\mu, \nu)$ their total variation distance, which is defined as
\begin{equation}\label{eq:def_dTV}
  \begin{aligned}
    d_{TV}(\mu, \nu) : & = \sup_{A\subset \mathcal{X}} |\mu(A)- \nu(A)| \\
    & = \sum_{x\in \mathcal{X} : \mu(x)>\nu(x) } \mu(x)- \nu(x) \\
    & = \frac{1}{2} \sum_{x\in \mathcal{X} } |\mu(x)- \nu(x) |\\
    & = \inf_{(X, Y) : X\sim \mu, Y\sim \nu} P(X \neq Y) .
   \end{aligned}
\end{equation}
The above infimum is over couplings $(X, Y)$ of $\mu$ and $\nu$ and it is achieved.

\medskip
Using a classical approach to 0-1 laws, we now establish a sufficient condition for the 0-1 law for $\mathcal{T}_{RI}$ in terms of total variation distances. 
For any finite vertex set $K$, 
\begin{itemize}
\item we call admissible path inside $K$ any nearest-neighbor path in $\mathcal{G}$ with finite length, with starting time 0 and with extremities belonging to $\partial_X K$.
\item we call admissible configuration inside $K$ any configuration $\eta$ such that $\p(\In_K=\eta)>0$, that is, any configuration of the form $\sum_{i=1}^k \delta_{\gamma_i}$, where $k$ is a finite integer (possibly zero), and for each $i=1, \dots, k$, $\gamma_i$ is an admissible path inside $K$. 
\end{itemize}

\begin{lemma}  \label{lem:suf_cond_0_1_law_distance}
  Assume that for every finite vertex set $K$, for every admissible configuration $\eta$ inside $K$, as $L\to \mathbb{V}$,
  \begin{equation}\label{eq:condition_dTV_to0_lem_one_dir}
    d_{TV}\left(\mathbb{P}(\Out_L  \in \cdot | \In_K =\eta),
    \mathbb{P}(\Out_L  \in \cdot | \In_K =0) \right) \to 0.
  \end{equation}
  Then every event in $\mathcal{T}_{RI}$ has probability either 0 or 1 under $\p$.

  Furthermore, the above condition can be restricted to $\eta$'s consisting of a single path.
\end{lemma}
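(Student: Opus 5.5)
The plan is to show that every $A\in\mathcal{T}_{RI}$ is independent of $\sigma(\In_K)$ for every finite $K$. Then $A$ is independent of $\sigma\bigl(\bigcup_K \sigma(\In_K,\Out_K)\bigr)$, which is all of $\mathcal{A}$. Hence $A$ is independent of itself and $\p(A)\in\{0,1\}$.

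\emph{Step 1: reduction to conditional probabilities.} Fix $K$ and $A\in\mathcal{T}_{RI}$. For every finite $L\supset K$ we have $A\in\sigma(\Out_L)$, so $A=\{\Out_L\in B_L\}$ for some measurable $B_L$. Since $\In_K$ takes countably many values, it suffices to show that $\p(A\mid \In_K=\eta)=\p(A)$ for every admissible $\eta$. For every $L\supset K$,
\begin{equation*}
|\p(A\mid\In_K=\eta)-\p(A\mid\In_K=0)| \le d_{TV}\bigl(\p(\Out_L\in\cdot\mid\In_K=\eta),\p(\Out_L\in\cdot\mid\In_K=0)\bigr).
\end{equation*}
The left-hand side does not depend on $L$. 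Letting $L\to\mathbb{V}$ along an exhaustion and using \eqref{eq:condition_dTV_to0_lem_one_dir}, we get $\p(A\mid\In_K=\eta)=\p(A\mid\In_K=0)=:c$ for all $\eta$. Averaging over $\eta$ gives $c=\p(A)$. This proves the independence of $A$ and $\sigma(\In_K)$.

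\emph{Step 2: generation of $\mathcal{A}$.} For each $K$, $\omega$ restricted to $W_K^*$ is recovered from $(\In_K,\Out_K)$. Indeed, the spatial Markov property shows that, given the hinge labels, the pieces can be concatenated. More simply, $\Out_K$ contains the trajectories avoiding $K$ and $\In_K$ contains all finite-time information inside $K$. So $A\in\sigma(\Out_K)$ is independent of $\sigma(\In_K)$, but we need independence from a generating $\pi$-system of $\mathcal{A}$. The cleaner route is the classical one. Take $L\supset K$; then $\sigma(\In_K)\subset\sigma(\In_L)$, because $Ins_K(w)$ is a sub-piece of $Ins_L(w)$ for trajectories hitting $K$, and every trajectory hitting $K$ hits $L$. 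So the $\sigma(\In_K)$ form an increasing family. Its union is a $\pi$-system generating $\mathcal{A}$, because for fixed $D$ in a generating class of cylinder events depending on finitely many coordinates, $\omega(D\cap W_K^*)$ is determined by the pieces inside a large enough $L$, and $\omega(D)=\lim_K\omega(D\cap W_K^*)$. By Dynkin's theorem, $A$ is independent of $\mathcal{A}$.

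\emph{Step 3: reduction to single paths.} The remaining claim is that it suffices to treat $\eta=\delta_\gamma$. Write $\eta=\sum_{i=1}^k\delta_{\gamma_i}$. The spatial Markov description of Section \ref{sec:hinge_decomp} shows that, given $\In_K=\eta$, $\Out_L$ is obtained by independently extending each $\gamma_i$ and superposing an independent Poisson process of trajectories avoiding $K$. The latter has the same law as $\Out_L$ given $\In_K=0$, up to a measurable map combining the pieces.

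Couple step by step, replacing one path at a time by nothing. The configuration conditioned on $\In_K=\sum_{i\le j}\delta_{\gamma_i}$ is the superposition of the independent extension of $\gamma_j$ with an independent copy of the configuration conditioned on $\In_K=\sum_{i<j}\delta_{\gamma_i}$. Superposing a common independent component cannot increase total variation. So the triangle inequality gives a bound by $\sum_i d_{TV}$ of the single-path terms, each of which tends to $0$.

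The main obstacle is making the superposition claim precise. One must check that $\Out_L$ given $\In_K=\eta$ is a measurable function of the independent single-path extensions together with the $\In_K=0$ configuration, which follows from the Poisson splitting of $\nu$ and the hinge sampling description. This step needs the most care, alongside the generation argument in Step 2.
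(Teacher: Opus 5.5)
Your proposal is correct and follows the paper's proof in essence. You show that $A$ is independent of $\In_K$ via the total-variation bound on $\p(A\mid\In_K=\eta)-\p(A\mid\In_K=0)$, conclude by generation of $\mathcal{A}$ by the $\In_K$'s, and reduce to single paths by treating the extensions of the other paths as a common independent component and applying the triangle inequality.

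The differences are cosmetic. You use $|\mu(B)-\nu(B)|\le d_{TV}(\mu,\nu)$ directly rather than the paper's explicit coupling through $\Hin_L$, and you sketch the generation step that the paper only asserts. One correction: independence from $\mathcal{A}$ comes from the increasing family $\sigma(\In_K)\subset\sigma(\In_L)$ in your Step 2, not from the claim about $\sigma(\In_K,\Out_K)$ in your opening paragraph, which does not follow as stated.
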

\begin{remark}\label{rem:rewrite_d_TV_cond}
We remark that, due to the Markovian property evoked in Section \ref{sec:hinge_decomp}, we can rewrite the total variation distance in \eqref{eq:condition_dTV_to0_lem_one_dir} as
\begin{equation}
   d_{TV}\left(\mathbb{P}( \Hin_L  \in \cdot | \Hin_K =\rho),
    \mathbb{P}(\Hin_L  \in \cdot | \Hin_K =0) \right),
\end{equation}
  where $\rho$ is the hinge process on $K$ corresponding to $\eta$. 
  So the condition in Lemma \ref{lem:suf_cond_0_1_law_distance} really concerns the dependance of the hinge process $\Hin_L$ on the hinge process $\Hin_K$. 
\end{remark}

\begin{proof}
  We assume that the condition in the lemma holds. Let $E\in \mathcal{T}_{RI}$. Let $K$ be a finite vertex set and $\eta$ an admissible configuration on $K$, and let us consider, under a probability $\mathbf{P}$, a coupling $(\omega, \omega')$ with $\omega$ distributed under  $\mathbb{P}(\cdot | \In_K=\eta)$ and $\omega'$ distributed under $\mathbb{P}(\cdot | \In_K=0)$, such that $\Hin_L(\omega)=\Hin_L(\omega')$ with maximal probability, i.e.,
  \begin{equation}
    \mathbf{P}[\Hin_L(\omega) \neq \Hin_L(\omega')] =  
    d_{TV}\left(\mathbb{P}(\Hin_L \in \cdot | \In_K=\eta),
    \mathbb{P}(\Hin_L \in \cdot | \In_K=0) \right) 
  \end{equation}
  and such that on the event $\{\Hin_L(\omega)=\Hin_L(\omega')\}$, $\omega$ and $\omega'$ coincide outside $L$, in the sense that $\Out_L(\omega)= \Out_L(\omega')$. Since $E$ belongs to $\sigma(Out_L)$, on the latter event $\omega$ realises $E$ if and only if $\omega'$ does. It follows that:
  \begin{equation}
    \begin{aligned}
    & |\p(E|\In_K = \eta)  - \p(E|\In_K  = 0)| \\
    & = |\mathbf{E}[ \ind_{\omega \in E} - \ind_{\omega' \in E} ]| \\
    & \le \mathbf{P}[ \ind_{\omega \in E} \neq  \ind_{\omega' \in E} ] \\
    & \le \mathbf{P}(\Hin_L(\omega ) \neq \Hin_L(\omega'))\\
    & = d_{TV}\left(\mathbb{P}(\Hin_L \in \cdot | \In_K=\eta),
    \mathbb{P}(\Hin_L \in \cdot | \In_K=0) \right) .
    \end{aligned}
  \end{equation}
  Sending $L$ to $\mathbb{V}$, the hypothesis of the lemma yields $\p(E|\In_K= \eta) = \p(E|\In_K=0)$. 
  Since this is true for any admissible $\eta$, we get that, under $\p$, $E$ is independent of the variable $\In_K$.  
  Since the variables $(\In_K, K\in \mathcal{P}_f(\mathbb{V}))$ generate the $\sigma$-algebra $\mathcal{A}$, this implies that, under $\p$, $E$ is independent of $\mathcal{A}$, and in particular of itself, hence $\p(E) \in \{0 , 1\}$. This concludes the law of the 0-1 law.

  \smallskip 

  To justify that one can restrict to $\eta$'s of the form $\eta=\delta_\gamma$, where $\gamma$ is an admissible path inside $\K$, we observe that for every admissible configuration $\eta$ inside $K$ and every admissible path $\gamma$ inside $K$,
\begin{equation}
  \begin{aligned}
  & d_{TV}\left(\mathbb{P}(\Hin_L  \in \cdot | \In_K =\eta+\delta_\gamma),
    \mathbb{P}(\Hin_L  \in \cdot | \In_K =\eta) \right) \\
  & \leq d_{TV}\left(\mathbb{P}(\Hin_L  \in \cdot | \In_K =\delta_\gamma),
    \mathbb{P}(\Hin_L  \in \cdot | \In_K =0) \right) .
  \end{aligned}
\end{equation}
  Indeed, it is just a matter of coupling identically the trajectories corresponding to $\eta$ in both processes. 
  The condition in the lemma derives by reasoning by induction on the number of paths in $\eta$, with the help of the triangular inequality for $d_{TV}$.
\end{proof}

In order to establish the quantitative criterion of Theorem \ref{th:quantitative_criterion}, we will show in Section \ref{sec:necessary_and_sufficient_for_quantitative} that the condition of Lemma \ref{lem:suf_cond_0_1_law_distance} is also necesssary for the 0-1 law for $\mathcal{T}_{RI}$ to hold. This will result from a more general statement that we present now, and that will also be useful (in Section \ref{sec:prelim_tail_assump}) to exploit the tail-triviality or tail-atomicity assumptions of Section \ref{sec:state_tail_assumptions}. 

\smallskip

Let us first recall a useful fact. 
For a (possibly time-inhomogeneous) Markov chain $Z=(Z_n)_{n\ge 0}$ on a countable state space $\mathcal{X}$, 
for any starting points $x$ and $y$ in $\mathcal{X}$, there exists, under a probability measure $\mathbf{P}$, a coupling $(X, Y) = ((X_n)_{n\ge 0} , (Y_n)_{n\ge 0} ) $ of the Markov chain started respectively at $x$ and at $y$ 
such that, almost surely, $X$ and $Y$ coincide after their first meeting time $T:=\inf\{n\ge 0: X_n=Y_n\}$ 
and such that for every $n\ge 0$:
\begin{equation}\label{eq:def_max_coupl}
  \mathbf{P}[T>n]=d_{TV}(P_x[Z_n \in \cdot], P_y[Z_n \in \cdot]).
\end{equation}
This coupling is called the maximal coupling of the Markov chain $Z$, see \cite{Griffeath1975, Pitman1976} or Section 12.4.2 of \cite{Lawler2010}. 

\begin{lemma}\label{lem:general_equiv_01law}
Consider a (possibly time-inhomogeneous Markov chain) $(Z_n)_{n\ge 0}$ on a discrete countable state space. We fix $x_0\in \mathcal{X}$. The following statements are equivalent:
\begin{itemize}
  \item[(i)] For every $n\ge 0$, for every $x$ and $y$ in $\mathcal{X}$ such that $P_{x_0}(Z_n =x)>0$ and $P_{x_0}(Z_n =y)>0$, we have
  \begin{equation}
  d_{TV}(P_{x_0}(Z_m \in \cdot | Z_n = x), P_{x_0}(Z_m \in \cdot | Z_n = y))  \underset{m\to\infty}{\longrightarrow} 0.
  \end{equation}
  \item[(ii)] Every event in $\mathcal{T}:=\bigcap_{n\ge 0} \sigma(Z_m , m \ge n)$ has probability either 0 or 1 under $P_{x_0}$.
\end{itemize} 
\end{lemma}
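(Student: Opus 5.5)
We prove the two implications separately. For (i) $\Rightarrow$ (ii) we argue as in Lemma \ref{lem:suf_cond_0_1_law_distance}. For (ii) $\Rightarrow$ (i) we use backward martingale convergence together with the Markov property.

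\emph{(i) $\Rightarrow$ (ii).} Fix $E\in\mathcal{T}$, an integer $n$, and states $x,y$ charged by $P_{x_0}(Z_n\in\cdot)$. For $m\ge n$, the event $E$ lies in $\sigma(Z_k,k\ge m)$. By the Markov property at time $m$, the conditional law of $(Z_k)_{k\ge m}$ given $Z_n=x$ is the mixture over $z$ of $P_{x_0}(Z_m=z\,|\,Z_n=x)$ times the law of the chain started from $z$ at time $m$; the same holds for $y$. Coupling the two conditional processes so that $Z_m$ agrees with probability $1-d_{TV}$, and then running the same chain afterwards, we obtain
\[
|P_{x_0}(E\,|\,Z_n=x)-P_{x_0}(E\,|\,Z_n=y)|\le d_{TV}\bigl(P_{x_0}(Z_m\in\cdot\,|\,Z_n=x),P_{x_0}(Z_m\in\cdot\,|\,Z_n=y)\bigr).
\]
Letting $m\to\infty$ shows that $P_{x_0}(E\,|\,Z_n=\cdot)$ is constant on the support of $Z_n$. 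By the Markov property at time $n$, $P_{x_0}(E\,|\,Z_0,\dots,Z_n)=P_{x_0}(E\,|\,Z_n)$, since $E\in\sigma(Z_k,k\ge n)$. Hence $E$ is independent of $\sigma(Z_0,\dots,Z_n)$ for every $n$. These $\sigma$-algebras form a $\pi$-system generating a $\sigma$-algebra that contains $E$, so $E$ is independent of itself and $P_{x_0}(E)\in\{0,1\}$.

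\emph{(ii) $\Rightarrow$ (i).} Fix $n$ and $x,y$ in the support of $Z_n$, and let $f_m(z)=P_{x_0}(Z_n=x\,|\,Z_m=z)$ for $m\ge n$. The Markov property (future independent of past given the present) gives
\[
\frac{P_{x_0}(Z_m=z\,|\,Z_n=x)}{P_{x_0}(Z_m=z)}=\frac{f_m(z)}{P_{x_0}(Z_n=x)}.
\]
Therefore
\[
d_{TV}\bigl(P_{x_0}(Z_m\in\cdot\,|\,Z_n=x),P_{x_0}(Z_m)\bigr)=\tfrac12 E_{x_0}\Bigl|\frac{f_m(Z_m)}{P_{x_0}(Z_n=x)}-1\Bigr|.
\]
Again by the Markov property, $f_m(Z_m)=P_{x_0}(Z_n=x\,|\,\sigma(Z_k,k\ge m))$. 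This is a reverse martingale with respect to the decreasing filtration $\sigma(Z_k,k\ge m)$. By the backward martingale convergence theorem it converges in $L^1$ to $P_{x_0}(Z_n=x\,|\,\mathcal{T})$, which equals $P_{x_0}(Z_n=x)$ by the triviality of $\mathcal{T}$. Hence each conditional law $P_{x_0}(Z_m\in\cdot\,|\,Z_n=x)$ is close in total variation to the unconditional law $P_{x_0}(Z_m\in\cdot)$, and the triangle inequality between the $x$ and $y$ cases gives (i).

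The main obstacle is the identification $f_m(Z_m)=P_{x_0}(Z_n=x\,|\,Z_k,k\ge m)$, which requires the Markov property in the time-inhomogeneous setting. Everything else is standard. The identification follows by checking the defining property on cylinder events $\{Z_m=z,Z_{m+1}=z_1,\dots\}$, which factor through $Z_m$.
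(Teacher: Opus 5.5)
Your proof is correct. For (i)$\Rightarrow$(ii) you follow the paper's coupling argument. You also make explicit a step the paper leaves implicit: independence of $E$ from the single variable $Z_n$ is upgraded to independence from $\sigma(Z_0,\dots,Z_n)$ via $P_{x_0}(E\,|\,Z_0,\dots,Z_n)=P_{x_0}(E\,|\,Z_n)$, and this is what lets the $\pi$-system argument conclude.

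For (ii)$\Rightarrow$(i) your route is genuinely different from the paper's.

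The paper takes the Griffeath--Pitman maximal coupling $(X,Y)$ of the chain started at time $n$ from $x$ and from $y$. It sets $E_m=\{z: P_{x_0}(Z_m=z\,|\,Z_n=x)>P_{x_0}(Z_m=z\,|\,Z_n=y)\}$ and applies triviality to the single tail event $\{Z_m\in E_m \text{ infinitely often}\}$. If this event has probability $0$, then $P_{x_0}(Z_m\in E_m\,|\,Z_n=x)\to 0$, which forces the limit $\delta$ of the non-increasing distances to vanish. If it has probability $1$, then $Y$ enters $E_m$ infinitely often. Under a maximal coupling, $Y_m\in E_m$ forces $X_m=Y_m$, so the coupling coalesces almost surely.

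You instead apply L\'evy's downward theorem to $P_{x_0}(Z_n=x\,|\,\sigma(Z_k,k\ge m))=f_m(Z_m)$, using conditional independence of past and future given the present. Your argument buys three things:
\begin{itemize}
\item it is shorter;
\item it avoids both the existence of maximal couplings for time-inhomogeneous chains and the monotonicity of the distances in $m$;
\item it gives a stronger conclusion: each conditional law $P_{x_0}(Z_m\in\cdot\,|\,Z_n=x)$ is asymptotically close to the unconditional law $P_{x_0}(Z_m\in\cdot)$, i.e.\ $Z_n$ and $Z_m$ become asymptotically independent, and the pairwise statement in (i) follows from the triangle inequality.
\end{itemize}
The paper's argument is more constructive and stays in the coupling language used throughout. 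It exhibits directly a coupling of the two conditioned chains that coalesces almost surely, the kind of object the paper relies on again in Lemma~\ref{lem:equiv_conditions_01_MC}.
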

\begin{proof} The proof of the implication $(i)\implies (ii)$ follows the same scheme as the proof of Lemma \ref{lem:suf_cond_0_1_law_distance}.
  We assume that $(i)$ holds and consider $E\in\mathcal{T}$. 
  We fix $n\ge 0$ and $x,y$ in $\mathcal{X}$ such that $P_{x_0}(Z_n =x)>0$ and $P_{x_0}(Z_n =y)>0$. 
  Let us consider now an integer $m \ge n$. We use, under a probability $\mathbf{P}$, a coupling $(X, Y)$ of $P_{x_0}(\cdot | Z_n =x)$ and $P_{x_0}(\cdot | Z_n =y)$ such that $X_m= Y_m$ with maximal probability, i.e.:
  \begin{equation}
    \mathbf{P}[X_m\neq Y_m]  =  d_{TV}(P_{x_0}(Z_m \in \cdot | Z_n = x), P_{x_0}(Z_m \in \cdot | Z_n = y)).
  \end{equation}
  We further require that, on the event $X_m=Y_m$, $X$ and $Y$ coincide after time $m$. Since $E \in \sigma(Z_k, k\ge m)$, on the latter event $X$ realises $E$ if and only if $Y$ does. It follows that:
  \begin{equation}
    \begin{aligned}
    & |P_{x_0}(E | Z_n = x) - P_{x_0}(E | Z_n = y)| \\
    & = |\mathbf{E}[ \ind_{X \in E} - \ind_{Y \in E} ]| \\
    & \le \mathbf{P}[ \ind_{X \in E} \neq  \ind_{Y \in E} ] \\
    & \le \mathbf{P}[X_m \neq Y_m]\\
    & = d_{TV}(P_{x_0}(Z_m \in \cdot | Z_n = x), P_{x_0}(Z_m \in \cdot | Z_n = y))\, .
    \end{aligned}
  \end{equation}
  Sending $m$ to infinity, condition $(i)$ yields that $P_{x_0}(E | Z_n = x) = P_{x_0}(E | Z_n = y) $. 
  We have thus established that, under $P_{x_0}$, $E$ is independent of the variable $Z_n$, for every $n \ge 0$. It follows that, under $P_{x_0}$, $E$ is independent of $\sigma(Z_n , n\ge 0)$, and in particular of itself, hence $P_{x_0}(E) \in \{0 , 1\}$ and $(ii)$ is proved. 

  \smallskip

  Conversely, let us prove the implication $(ii)\implies (i)$. Let us assume that the 0-1 law holds, let us consider $n\ge 0$ and $x,y  \in \mathcal{X}$ such that $P_{x_0}(E | Z_n = x)>0$ and $ P_{x_0}(E | Z_n = y) >0$ and let us show that
  \begin{equation}\label{eq:def_delta}
    \delta :=  \lim_{m\to\infty} d_{TV}(P_{x_0}(Z_m \in \cdot | Z_n = x), P_{x_0}(Z_m \in \cdot | Z_n = y)) 
  \end{equation}
  is zero. Note that the quantity in the limit is non-increasing by elementary arguments.
  Let us consider, under a probability $\mathbf{P}$, $(X,Y)=((X_m)_{m\ge n},(Y_m)_{m\ge n} )$ the maximal coupling of the Markov chain started at time $n$ respectively from $x$ and $y$, in the sense of \eqref{eq:def_max_coupl}. For each $m\ge n$, we consider the subset $E_m$ of $\mathcal{X}$ defined as:
  \[E_m:=\{z \in \mathcal{X}, P_{x_0}(Z_m = z | Z_n = x) > P_{x_0}(Z_m = z | Z_n = y)  \} \,, \]
  which satisfies 
  \begin{equation}\label{eq:PEm_ge_delta}
  P_{x_0}(Z_m \in E_m | Z_n = x) 
   \ge P_{x_0}(Z_m \in E_m | Z_n = y)  
   + \delta 
   \ge \delta,
  \end{equation}
  by the definition of $\delta$ in \eqref{eq:def_delta} and by the definition of the total variation distance in \eqref{eq:def_dTV}. 
  Let us consider the event 
  $E:=\{Z_m \in E_m \text{ for infinitely many } m \text{'s}\}$. 
  This event belongs to $\mathcal{T}$, thus $(i)$ yields that $p:=P_{x_0}(E)$ is either 0 or 1. We observe that then also $P_{x_0}( E | Z_n = x)$ and $P_{x_0}(E | Z_n = y)$ are equal to $p$. 
  
  On the one hand, if $p=0$, then $P_{x_0}( Z_m \in E_m \text{ i.o.} |Z_n=x )=0$, which implies that $P_{x_0}(Z_m \in E_m |Z_n=x )  \underset{m\to\infty}\longrightarrow 0$, and then \eqref{eq:PEm_ge_delta} yields that $\delta=0$. 

  On the other hand, if $p=1$, then $\mathbf{P}(Y_m \in E_m \text{ i.o.} ) = P_{x_0}(E|Z_n=y)=1$ so, $\mathbf{P}$-almost surely, there exists an $m\ge n$ (in fact, infinitely many $m$'s) such that $Y_m \in E_m$.
  By maximality of the coupling, 
  $Y_m\in E_m$ implies almost surely that $T\le m$. Thus, $T$ is almost surely finite and $\delta= \lim_{m\to \infty} \mathbf{P}(T>m)=0$. 
\end{proof}

We finally provide an estimate on the total variation distance between $Poi(\lambda)$, which denotes the Poisson distribution with parameter $\lambda$ and $Poi(\lambda)+1$, which denotes the law of $X+1$, where $X$ follows $Poi(\lambda)$.
\begin{lemma}\label{lem:dTV_Poisson}
For every $\lambda > 0$, 
\begin{equation}
  d_{TV}(Poi(\lambda), Poi(\lambda)+1) \le \frac{1}{2\sqrt{\lambda}}.
\end{equation}
\end{lemma}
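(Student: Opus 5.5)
We compute the total variation distance explicitly from the formula $d_{TV}(\mu,\nu)=\sum_{k:\mu(k)>\nu(k)}(\mu(k)-\nu(k))$ in \eqref{eq:def_dTV}. Write $p_k:=e^{-\lambda}\lambda^k/k!$ for $k\ge 0$ and $p_{-1}:=0$. Then $Poi(\lambda)$ gives mass $p_k$ to $k$, and $Poi(\lambda)+1$ gives mass $p_{k-1}$ to $k$.

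\textbf{Step 1 (monotonicity structure).} We have $p_k\ge p_{k-1}$ if and only if $k=0$ or $\lambda/k\ge 1$, that is, if and only if $k\le \lambda$. Hence the set $\{k : p_k>p_{k-1}\}$ is an initial segment $\{0,\dots,N\}$ with $N=\lfloor\lambda\rfloor$. When $\lambda$ is an integer, the term with $k=\lambda$ vanishes, so it does not matter whether it is included.

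\textbf{Step 2 (telescoping).} Consequently
\begin{equation}
d_{TV}(Poi(\lambda),Poi(\lambda)+1)=\sum_{k=0}^{N}(p_k-p_{k-1})=p_N=\max_{k\ge 0}p_k ,
\end{equation}
so the distance is exactly the largest Poisson point mass.

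\textbf{Step 3 (bounding the mode).} It remains to show $\max_k p_k\le 1/(2\sqrt{\lambda})$, and this is the only real obstacle. Our first attempt uses Stirling's bound $k!\ge \sqrt{2\pi k}\,(k/e)^k$ for $k\ge1$. It gives
\begin{equation}
p_k\le \frac{1}{\sqrt{2\pi k}}\, e^{k-\lambda}(\lambda/k)^k\le \frac{1}{\sqrt{2\pi k}},
\end{equation}
because $x\mapsto x\log(\lambda/x)+x-\lambda\le 0$. With $k=N$ this yields $1/\sqrt{2\pi N}$. This is fine when $N$ is comparable to $\lambda$: for $\lambda\ge 2$ we have $N\ge 2\lambda/3$, and therefore $p_N\le 1/\sqrt{(4\pi/3)\lambda}\le 1/(2\sqrt{\lambda})$, since $4\pi/3>4$.

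The small cases are then checked directly. For $\lambda<1$, $N=0$ and $p_0=e^{-\lambda}$, and one needs $2\sqrt{\lambda}e^{-\lambda}\le 1$. The left side is maximized at $\lambda=1/2$, where it equals $\sqrt2 e^{-1/2}\approx 0.858$, so this holds. For $1\le\lambda<2$, $N=1$ and $p_1=\lambda e^{-\lambda}$, and one needs $2\lambda^{3/2}e^{-\lambda}\le1$. The maximum is at $\lambda=3/2$, giving $2(1.5)^{1.5}e^{-1.5}\approx 0.82$, so this holds as well.

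An alternative route to Step 3, which avoids the case analysis, is a variance-type argument: a unimodal integer law with variance $\lambda$ has mode mass at most of order $1/\sqrt{\lambda}$. The Stirling route is more concrete, however, and we would follow it.
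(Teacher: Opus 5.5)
Your proof is correct, but it follows a different route from the paper's. You use the unimodality of the Poisson weights to telescope the sum over $\{k: p_k>p_{k-1}\}$. This gives the exact identity $d_{TV}(Poi(\lambda),Poi(\lambda)+1)=p_{\lfloor\lambda\rfloor}=\max_k p_k$. You then bound the mode by Stirling's lower bound for $\lambda\ge 2$ and by elementary calculus for $\lambda<2$. These steps check out: $\lfloor\lambda\rfloor\ge 2\lambda/3$ for $\lambda\ge 2$, $4\pi/3>4$, and the maxima $\sqrt{2}e^{-1/2}$ and $2(3/2)^{3/2}e^{-3/2}$ are both below $1$.

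The paper instead notes that $p_{k-1}=(k/\lambda)\,p_k$ for every $k\ge 0$. Hence $2\,d_{TV}=\sum_k p_k\,|k/\lambda-1|=E[|X/\lambda-1|]$ with $X\sim Poi(\lambda)$, and Cauchy--Schwarz finishes in one line: $E[|X/\lambda-1|]\le (\mathrm{Var}(X)/\lambda^2)^{1/2}=\lambda^{-1/2}$.

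What each approach buys:
\begin{itemize}
\item The paper's argument is shorter and uniform in $\lambda$. It needs neither Stirling's formula nor a case split with numerical checks.
\item Yours gives the exact value of the distance and hence its sharp asymptotics $(2\pi\lambda)^{-1/2}$. This shows the constant $1/2$ is not optimal for large $\lambda$.
\end{itemize}
Comparing the two computations also recovers the classical identity $E|X-\lambda|=2\lambda\,p_{\lfloor\lambda\rfloor}$.

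Your closing ``variance-type'' alternative is close in spirit to the paper's proof. The paper does not need a general principle about unimodal laws, though: the ratio identity $p_{k-1}=(k/\lambda)p_k$ together with $\mathrm{Var}(X)=\lambda$ is all it uses.
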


\begin{proof}
With $X\sim Poi(\lambda)$, 
\begin{equation}
  \begin{aligned}
    2 \, d_{TV}(Poi(\lambda), Poi(\lambda)+1)  
    & = \sum_{k\ge 0} \frac{\lambda^k}{k!} e^{-\lambda} \left|\frac{k}{\lambda}-1 \right|\\
    & = E\left[\left|\frac{X}{\lambda} - 1 \right|\right]\\ 
    & \le E\left[\left(\frac{X}{\lambda} - 1 \right)^2 \right]^{\frac{1}{2}}
     = \frac{1}{\sqrt{\lambda}}\, .
  \end{aligned}
\end{equation}
\end{proof}


\section{Proofs under assumptions on \texorpdfstring{$\mathcal{T}_{MC}$}{TMC}}

In this section, we prove Theorems \ref{th:tail_triviality} and \ref{th:tail_atomicity}.

\subsection{Preliminaries}\label{sec:prelim_tail_assump}

We first show that the total mass of $\nu$ is infinite, so that the total number of trajectories in the interlacement process is almost surely infinite.

\begin{lemma}\label{lem:cap_to_infty}
  As $K\to\mathbb{V}$, $\capa(K)\to \infty$. 
\end{lemma}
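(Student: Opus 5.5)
The plan is to reduce to showing that, for every fixed vertex $x$ and every $N$, the capacity of a sufficiently large finite set exceeds $N$. Since $K\mapsto \capa(K)$ is non-decreasing, the limit along exhaustions exists; this follows from \eqref{eq:consistency_eq_meas} together with $P_{\harm_L}[\tau_K<\infty]=\capa(K)/\capa(L)\le 1$. So it suffices to show that $\capa(K_n)$ is unbounded along one exhaustion. I would argue by contradiction: suppose $\capa(K_n)\le C$ for all $n$.

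The key tool is the hitting-probability formula coming from the last-exit decomposition. For $z\in K$, decomposing at the last visit to $K$ and using reversibility gives
\[
1 = P_z[\tau_K<\infty] = \sum_{y\in K} g(z,y)\, e_K(y)/a_y ,
\]
where $g$ is the Green function, symmetric in the sense $g(z,y)/a_y=g(y,z)/a_z$. Summing the identity against $e_K$ gives only consistency, so instead I would fix $x$ and use $g(x,y)a_x/a_y=g(y,x)\le g(x,x)$, i.e. $g(x,y)\le g(x,x)\,a_y/a_x$. This bound is the wrong direction and does not immediately help. A better route is to use the identity with the starting vertex varying. For any $z\in K$, we have $1=\sum_{y\in K} g(z,y)e_K(y)/a_y$, and $g(z,y)/a_y=g(y,z)/a_z\le g(z,z)/a_z$. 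Hence
\[
1\le \frac{g(z,z)}{a_z}\,\capa(K),
\]
that is, $\capa(K)\ge a_z/g(z,z)=\capa(\{z\})$. This is only a lower bound by single-point capacities, so it is not enough in general.

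The real argument should instead use transience and escape. Fix $x$ and a large finite set $K$ containing $x$. We have $e_K(x')=a_{x'}P_{x'}[\tau_K^+=\infty]$. Let $\mu$ be the measure $a$ restricted to $K$. Sum the last-exit identity over $z\in K$ weighted by $a_z$:
\[
a(K)=\sum_{y\in K} e_K(y)\,\frac{1}{a_y}\sum_{z\in K} a_z g(z,y)=\sum_{y\in K} e_K(y)\,E_y[\text{time spent in }K]\cdot(\text{symmetry factor}).
\]
By reversibility, $\sum_z a_z g(z,y)/a_y=\sum_z g(y,z)=E_y[\#\{n: X_n\in K\}]$. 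So
\[
a(K)=\sum_{y}e_K(y)\,E_y[\ell_K], \qquad \ell_K=\#\{n\ge 0: X_n\in K\}.
\]

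This gives a route if one could show that $a(K_n)\to\infty$ faster than the occupation times grow, but that is not clear either. So the main obstacle is a genuine lower bound, and I would try a direct contradiction argument instead. Suppose $\capa(K_n)\le C$. By the hinge decomposition, the number of interlacement trajectories meeting $K_n$ is $\mathrm{Poi}(\capa(K_n))$, which is increasing in $n$ and bounded in law. Hence $\nu(W^*)=\lim_n \capa(K_n)\le C<\infty$, so $\omega$ has finitely many trajectories a.s., with probability $e^{-C}>0$ of none at all. In that case $\bbP(\I\cap K=\emptyset)\ge e^{-C}$ for every finite $K$.

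On the other hand, take $y$ far away in $\mathbb{V}$ (infinite, since the graph is connected and the chain is transient). By \eqref{eq:car_prop_I}, the pair $\{x,y\}$ has capacity at least $\capa(\{x\})+\capa(\{y\})-2\,\mathrm{(interaction)}$. Choosing many points $y_1,\dots,y_m$, each escaping the others with probability close to $1$, makes $\capa(\{y_1,\dots,y_m\})\ge\sum_i a_{y_i}P_{y_i}[\tau^+_{\{y_1..y_m\}}=\infty]$ large. Concretely, I choose the $y_i$ inductively so that $P_{y_i}$ of hitting earlier points is small (possible by transience, since $P_y[\tau_x<\infty]\to0$ along some sequence) and $P_{y_j}$ of returning to $y_i$ later is also small. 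Each term is then at least $\tfrac12\capa(\{y_i\})$. The hardest step is ensuring $\sum_i \capa(\{y_i\})=\infty$ when single-point capacities may decay. I would try choosing $y_i$ along a path to infinity and using that $\capa(\{y\})\ge c$ fails only if the escape probabilities vanish, which I expect to handle by grouping vertices into blocks of capacity at least $1$ using monotonicity.
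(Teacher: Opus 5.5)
There is a genuine gap: the proposal never proves the lower bound that makes capacity large, and the scheme you sketch for it does not work as stated. Your reduction to a single exhaustion via monotonicity is fine.

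The contradiction set up through the interlacement process adds nothing. It is only the hypothesis $\sup_K\capa(K)\le C$ rewritten via $\bbP(\I\cap K=\emptyset)=e^{-\capa(K)}$, so everything rests on the final construction.

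That construction has two problems.

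\emph{Escape is controlled in one direction only.} Your inductive choice makes later points unlikely to hit earlier ones. But for $y_i$ to escape $Y=\{y_1,\dots,y_m\}$ you also need $P_{y_i}[\tau_{y_j}<\infty]$ small for $j>i$. By reversibility this equals $\frac{\capa(\{y_j\})}{\capa(\{y_i\})}P_{y_j}[\tau_{y_i}<\infty]$, which your choice does not control. In a transient birth--death chain on $\N$ it equals $1$ whenever $y_i<y_j$, so only $\max Y$ contributes to $\capa(Y)$. There the lemma still holds, because single-vertex capacities blow up. But this shows your scheme needs a case distinction for which you have no general argument.

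\emph{Vanishing single-vertex capacities.} In the opposite regime, single-vertex capacities tend to $0$. An example is a binary tree with conductance $2^{-n/2}$ between levels $n$ and $n+1$, which is transient and has $a_y\to 0$. There, obtaining $\sum_i\capa(\{y_i\})=\infty$ along a nearly independent family is the whole difficulty, and you leave it open. Grouping into blocks of capacity at least $1$ does not resolve it. If $K$ contains a finite set $F$ together with all neighbours of $F$, then $\capa(K\setminus F)=\capa(K)$. So blocks of capacity $\ge c$ avoiding $F$ exist exactly when some finite set has capacity $\ge c$, which is what you are trying to prove; under your hypothesis there are none if $C<1$. You would also still need to control hitting between blocks.

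What is missing is a mechanism that turns transience into a quantitative lower bound. The paper uses Thomson's principle: $2/\capa(K)$ is the minimal energy $\sum_{x\sim y}\theta(x,y)^2/a_{x,y}$ of a unit flow $\theta$ from $K$ to infinity. Transience gives a finite-energy unit flow $\theta_0$ from a singleton $K_0$. For $K\supseteq K_0$, setting $\theta_0$ to $0$ on edges inside $K$ gives a unit flow from $K$ to infinity. Hence $2/\capa(K)$ is at most the energy of $\theta_0$ on edges not contained in $K$, the tail of a convergent series, which vanishes as $K\to\mathbb{V}$.

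Your own last-exit tools can also be completed. For $x\in L$, consistency gives $\capa(\{x\})=\sum_{y\in L}e_L(y)P_y[\tau_x<\infty]$. The last-exit decomposition with reversibility gives $e_L(y)P_y[\tau_x<\infty]=\capa(\{x\})\,P_x[X_{\lambda_L}=y]$. Splitting according to $D_\epsilon=\{y:P_y[\tau_x<\infty]>\epsilon\}$ yields
$\capa(\{x\})\le\epsilon\,\capa(L)+\capa(\{x\})\,P_x[X_{\lambda_L}\in D_\epsilon]$.
The last probability tends to $0$ as $L\to\mathbb{V}$, because $X$ visits $D_\epsilon$ only finitely often: each visit gives probability more than $\epsilon$ of a later visit to $x$, and $x$ is visited finitely often. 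Since $\capa(\{x\})>0$, this gives $\liminf_{L\to\mathbb{V}}\capa(L)\ge\capa(\{x\})/\epsilon$ for every $\epsilon>0$.
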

\begin{proof}
  We use potential theory for the irreducible and reversible Markov chain $X$ (cf Sections 2.4 and 2.5 in Lyons and Peres). 
  We recall that, for any finite vertex set $K$, a unit flow from $K$ to infinity is a function $\theta:\mathcal{E} \to [0, \infty)$ such that:
  \begin{itemize}
  \item $\forall (x,y)\in \mathcal{E}, \theta(x,y)=-\theta(y,x),$
  \item $\forall x \in \mathbb{V}\setminus K, \sum_{y\sim x } \theta(x,y)=0,$
  \item $\forall x, y \in K$ such that $(x,y)\in \mathcal{E}$, $\theta(x,y)=0$,
  \item $ \sum_{x\in K, y\notin K: x\sim y} \theta(x,y)=1$,
  \end{itemize}
  and that we have
  \begin{equation}
    \frac{2}{\capa(K)} = 
    \inf \left\{
      \sum_{x,y\in \mathbb{V}: x\sim y} \frac{1}{a_{x,y}} \theta(x,y)^2 : 
    \theta \text{ is a unit flow from } K \text{ to infinity}
    \right\}, 
  \end{equation}
  with the convention $1/0=\infty$. 
  Let us fix a non-empty vertex set $K_0$, for example a singleton. By transience of $X$, $\capa(K_0)>0$, hence there exists a unit flow $\theta_0$ from $K_0$ to infinity such that $\sum_{x,y\in \mathbb{V}: x\sim y} \frac{1}{a_{x,y}} \theta_0(x,y)^2<\infty$. 
  Then, for every vertex set $K$ containing $K_0$, considering
  \begin{equation}
  \theta_K(x,y):= 
    \begin{cases}
      0 & \text{if } x, y \in K, \\
      \theta_0(x,y)  & \text{else}.
    \end{cases}
  \end{equation}
  which is a unit flow from $K$ to infinity, we get that 
  \begin{equation}
    \frac{2}{\capa(K)}  \le \sum_{(x,y) \in \mathbb{V}^2 \setminus K^2 : x\sim y} \frac{1}{a_{x,y}} \theta_0(x,y)^2
  \end{equation}
  and the latter vanishes as $K\to\mathbb{V}$. The lemma follows.
\end{proof}

When the Markov chain $X$ is tail-trivial, we produce a coupling of $X$ started at two different vertices such that the two copies coincides eventually, up to time-shift.

\begin{lemma}\label{lem:equiv_conditions_01_MC}
  Assume that the Markov chain $X$ satisfies the 0-1 law for $\mathcal{T}_{MC}$. Then, for every $x, y\in \mathbb{V}$, there exists a coupling $X=(X_n)_{n\ge 0} , Y=(Y_n)_{n\ge 0}$ of the Markov chain $X$ started respectively at $x$ and $y$ such that, almost surely, there exist (random) finite times $S$ and $T$ for which
  \begin{equation}
    \forall n\ge 0 , \qquad  X_{S+n}=Y_{T+n}, 
  \end{equation} 
  that is, $X$ and $Y$ coincide eventually, up to time-shift. 
\end{lemma}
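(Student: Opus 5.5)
The approach is a "shift-coupling" argument, in the spirit of Thorisson's results and Aldous' characterisation of the invariant $\sigma$-algebra. We reduce the problem to a total variation statement for a suitably chosen time-inhomogeneous (space-time) Markov chain, and then apply Lemma \ref{lem:general_equiv_01law} together with the maximal coupling \eqref{eq:def_max_coupl}.

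\textbf{Reduction.} By irreducibility there is a nearest-neighbour path from $x$ to $y$ of some length $k$ with positive probability. So it suffices to produce, for the chain started at a single point $x$, two copies whose laws are close after conditioning on the early part of the trajectory.

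Rather than shifting time directly, I randomise time. Let $Z_n := (X_{n+U}, \ldots)$ with $U$ uniform on $\{0,\dots,N\}$. Concretely, consider the lazy, randomised-start chain in which we delay by an independent geometric number of steps. Equivalently, I will use the characterisation that the invariant $\sigma$-algebra $\mathcal{T}_{MC}$ coincides, modulo null sets, with the tail $\sigma$-algebra of the space-time chain $\tilde X_n := (X_n, n \bmod \text{period})$ averaged over shifts.

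A cleaner route is Thorisson's theorem: two processes admit a shift-coupling if and only if their laws agree on the invariant $\sigma$-algebra. Tail-triviality gives $P_x = P_y = $ the same $\{0,1\}$-valued measure on $\mathcal{T}_{MC}$. Hence $P_x$ and $P_y$ agree on $\mathcal{T}_{MC}$.

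\textbf{Key step: building the shift-coupling via Cesàro total variation.} I would prove directly that
\[
\Big\| \tfrac1N\sum_{j=0}^{N-1} P_x(X_{j+\cdot}\in\cdot) - \tfrac1N\sum_{j=0}^{N-1} P_y(X_{j+\cdot}\in\cdot)\Big\|_{TV}\to 0 .
\]
To get this, apply an analogue of the $(ii)\Rightarrow(i)$ argument of Lemma \ref{lem:general_equiv_01law}. Suppose the limit $\delta$, which is monotone in $N$ up to $O(1/N)$ corrections, were positive. Then build the sets $E_N$ where the first averaged measure exceeds the second. From these, form an invariant event of the type $\{\tfrac1N\sum_{j<N}\ind_{\theta^j X\in E_N} \text{ large for infinitely many } N\}$. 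This event lies in $\mathcal{T}_{MC}$, and it has different probabilities under $P_x$ and $P_y$, which contradicts agreement on $\mathcal{T}_{MC}$.

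From the Cesàro convergence one constructs the coupling in stages, as in Thorisson's proof. At stage $i$, pick independent uniform shifts $S_i, T_i \le N_i$. Couple $\theta^{S_i}X$ and $\theta^{T_i}Y$ maximally, so they coincide entirely with probability $1-\epsilon_i$. On failure, restart from the current positions using the strong Markov property. Choosing $\sum \epsilon_i<\infty$ and invoking Borel–Cantelli, the copies eventually coincide up to the finite shifts $S,T$.

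\textbf{Main obstacle.} I expect the main obstacle to be the Cesàro total variation step: producing a genuinely shift-invariant event from the sets $E_N$, and checking the restart argument. The restart requires conditioning on failure, which changes the starting laws. However, tail-triviality holds from every starting point, so agreement on $\mathcal{T}_{MC}$ holds for arbitrary initial distributions, and the same estimate applies to the conditioned laws. If constructing the invariant event proves delicate, I would instead cite Thorisson's shift-coupling theorem (equivalence between agreement on the invariant $\sigma$-algebra and existence of a shift-coupling) and only verify its hypothesis, which is immediate from tail-triviality.
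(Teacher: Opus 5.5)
Your route differs from the paper's. Citing the Aldous--Thorisson shift-coupling theorem does give a valid proof, with one correction. The agreement of $P_x$ and $P_y$ on $\mathcal{T}_{MC}$ is not ``immediate from tail-triviality'', which only gives values in $\{0,1\}$. You also need $P_x(A)=\sum_z P_x[X_k=z]P_z(A)$ for invariant $A$, together with irreducibility.

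The self-contained Key step, however, has a gap. In Lemma \ref{lem:general_equiv_01law}, the passage from $\{Z_m\in E_m\text{ i.o.}\}$ to $\delta=0$ works only through the maximal coupling: for $p=1$, $Y_m\in E_m$ forces $T\le m$. Its Ces\`aro analogue is a maximal shift-coupling, which is exactly what you are trying to build. Nothing in your sketch shows that the event $\{\frac1N\sum_{j<N}\ind_{\theta^jX\in E_N}\text{ large for infinitely many }N\}$ has different probabilities under $P_x$ and $P_y$. You only know that the \emph{expectations} of these averages differ by about $\delta$, and that does not control a limsup event.

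Here is a repair on this countable state space. Let $Qf(z):=\sum_w p_{z,w}f(w)$, let $E_N$ be the set where $\frac1N\sum_{k<N}P_x[X_k\in\cdot]$ exceeds $\frac1N\sum_{k<N}P_y[X_k\in\cdot]$, and put $u_N:=\frac1N\sum_{k<N}Q^k\ind_{E_N}$. Then $0\le u_N\le 1$, $|Qu_N-u_N|\le 1/N$, and $u_N(x)-u_N(y)$ is the Ces\`aro distance. So a pointwise subsequential limit is a bounded harmonic $u$ with $u(x)-u(y)=\delta$. Since $u(z)=E_z[\limsup_n u(X_n)]$ with a $\mathcal{T}_{MC}$-measurable integrand, $\delta=0$ once all $P_z$ agree on $\mathcal{T}_{MC}$.

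The restart also needs care. After a failed coupling of whole shifted paths, there are no current positions with Markov futures. Instead, couple only $X_{S_i}$ and $Y_{T_i}$ maximally (for a Markov chain this has the same success probability) and run the chains identically on success. On failure, choose $N_{i+1}$ as a function of the current pair of positions. Since that pair has an imposed joint law, what you use is Ces\`aro convergence for each fixed pair of starting points, not agreement for arbitrary initial laws. With these repairs the argument is correct.

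The paper avoids shift-coupling theory altogether by using transience. Along an exhaustion $(K_n)$, the last-exit points $(X_{\lambda_{K_n}})_{n\ge0}$ form a time-inhomogeneous Markov chain. Its tail events are shift-invariant, because for large $n$ the last exit point from $K_n$ is unchanged by a time shift, so this chain is tail-trivial. Lemma \ref{lem:general_equiv_01law} then gives $d_{TV}(P_x[X_{\lambda_{K_n}}\in\cdot],P_y[X_{\lambda_{K_n}}\in\cdot])\to0$, and the maximal coupling makes the two exit sequences eventually equal. The pieces between successive last exits are then sampled identically, and the random times $\lambda_{K_n}$ absorb the shift.

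Indexing by the exhaustion rather than by time is what turns the invariant $\sigma$-algebra into a tail $\sigma$-algebra. This matters because the time-indexed tail can be strictly larger than $\mathcal{T}_{MC}$ even beyond periodicity. For the birth--death chain on $\Z_{\ge0}$ with $p_{n,n-1}=2^{-n}$, $\mathcal{T}_{MC}$ is trivial, yet $\lim_k(k-X_k)$ is a non-degenerate tail variable. So your aside identifying $\mathcal{T}_{MC}$ with a tail $\sigma$-algebra of $(X_n, n \bmod \text{period})$ is false and should be dropped.

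Your repaired route does not need transience. The paper's route is shorter because it reuses only tools already in place.
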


\begin{proof}
  Assume that the Markov chain $X$ satisfies the 0-1 law for $\mathcal{T}_{MC}$ and pick $x$ and $y$ in $\mathbb{V}$. 
  Consider an exhaustion $(K_n)_{n\ge 0}$ of $\mathbb{V}$ such that $K_0$ contains $x$ and $y$. 
  The Markov chain $(X_{\lambda_{K_n}})_{n\ge 0}$ is tail-trivial and an application of Lemma \ref{lem:general_equiv_01law} yields:
  \begin{equation}\label{eq:proof_MC_coupling_application_Lemma}
  d_{TV}(P_x[X_{\lambda_{K_n}} \in \cdot], P_y[X_{\lambda_{K_n}} \in \cdot]) \underset{n\to  \infty}\longrightarrow 0
  \end{equation}
  Consider the maximal coupling of the Markov process $(X_{\lambda_{K_n}})_{n\ge 0}$, started from $P_x[X_{\lambda_{K_0}}\in \cdot ]$ and $P_y[X_{\lambda_{K_0}}\in \cdot ]$, by \eqref{eq:proof_MC_coupling_application_Lemma}, the two processes almost surely coincide eventually. Now, given the realisation of this coupling, sample trajectories of the Markov chain $X$, started respectively from $x$ and $y$, conditionally on the exit points prescribed by the coupling. Where the two copies of $(X_{\lambda_{K_n}})_{n\ge 0}$ agree, build the two trajectories of $X$ identically. Then, almost surely, the two trajectories eventually coincide, up to time-shift.
\end{proof}

\begin{corollary}\label{cor:coincide_up_time_shift}
  Assume that the 0-1 law for $\mathcal{T}_{MC}$ is satisfied. 
  Then, for every finite vertex set $K$, for every $x, y\in \partial_X K$, there exists a coupling $(X, Y)$ of $P_x[\cdot | \tau_K^+=\infty]$ and $P_y[\cdot | \tau_K^+=\infty]$ such that, almost surely, there exist (random) finite times $S$ and $T$ for which
  \begin{equation}
    \forall n\ge 0 , \qquad  X_{S+n}=Y_{T+n}, 
  \end{equation} 
  that is, $X$ and $Y$ coincide eventually, up to time-shift. 
\end{corollary}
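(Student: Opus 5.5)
The idea is to reduce Corollary \ref{cor:coincide_up_time_shift} to Lemma \ref{lem:equiv_conditions_01_MC}. The extra work is to handle the conditioning on $\{\tau_K^+=\infty\}$, and this conditioning is harmless for tail-type coupling. Fix $K$ finite and $x,y\in\partial_X K$, so that $P_x[\tau_K^+=\infty]>0$ and $P_y[\tau_K^+=\infty]>0$.

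\textbf{Step 1: the conditioned laws are Markov chains of the same type.} Under $P_x[\cdot\,|\,\tau_K^+=\infty]$, the process $(X_n)_{n\ge1}$ is a Doob $h$-transform of $X$ killed on $K$, with $h(z)=P_z[\tau_K=\infty]$, started at $X_1$. Its law is absolutely continuous with respect to the law of $X$ started from the same distribution of $X_1$. Indeed, conditioning on the positive-probability event $\{\tau_K^+=\infty\}$ gives a law with density $\ind_{\{\tau_K^+=\infty\}}/P_x[\tau_K^+=\infty]$ relative to $P_x$.

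\textbf{Step 2: these laws inherit tail-triviality.} An event of $\mathcal{T}_{MC}$ has $P_x$-probability $0$ or $1$. Any $P_x$-null event is null under the conditioned law, and the same holds for complements. Hence $\mathcal{T}_{MC}$ is trivial under $Q_x:=P_x[\cdot\,|\,\tau_K^+=\infty]$, and likewise under $Q_y$.

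\textbf{Step 3: build the coupling.} I would repeat the proof of Lemma \ref{lem:equiv_conditions_01_MC} with $Q_x,Q_y$ in place of $P_x,P_y$. Choose an exhaustion $(K_n)$ with $K\subset K_0$ and $x,y\in K_0$. Under $Q_x$, the sequence $(X_{\lambda_{K_n}})_{n\ge0}$ is still a (time-inhomogeneous) Markov chain. This follows because $\{\tau_K^+=\infty\}$ is determined by the path before $\lambda_{K_0}$ together with the fact that the path after $\lambda_{K_0}$ never returns to $K_0\supset K$; so the future after $\lambda_{K_n}$ has the same conditional law as under $P$.

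I would then need $d_{TV}(Q_x[X_{\lambda_{K_n}}\in\cdot],Q_y[X_{\lambda_{K_n}}\in\cdot])\to0$. Both quantities are mixtures over the common kernel from $X_{\lambda_{K_0}}$, and by the Markov property the transitions of this chain are the same as under $P$. Under $P_z$, the tail $\sigma$-algebra of $(X_{\lambda_{K_n}})$ lies inside $\mathcal{T}_{MC}$ up to null sets, and it is trivial for every $z$. So I would apply Lemma \ref{lem:general_equiv_01law} to the chain started from an auxiliary initial state that mixes both initial laws. Concretely, I would use the tail-triviality of $\frac12(Q_x+Q_y)$, which is absolutely continuous with respect to $\frac12(P_x+P_y)$, with a fictitious time $-1$ state $\in\{x,y\}$ chosen as a fair coin, and take Lemma \ref{lem:general_equiv_01law}(ii)$\Rightarrow$(i) at $n=-1$.

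Next, take the maximal coupling of this exit-point chain, which merges almost surely. Fill in the excursions conditionally on the exit points, identically after merging. For the pre-$\lambda_{K_0}$ segments, fill in under the conditioned bridge laws that enforce $\tau_K^+=\infty$. The result is a coupling with finite $S,T$ such that $X_{S+\cdot}=Y_{T+\cdot}$.

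\textbf{Main obstacle.} The delicate point is Step 3: I must verify that the merged tail filling is consistent with the conditioning. This requires that, given $X_{\lambda_{K_n}}$ for $n\ge0$, the post-$\lambda_{K_0}$ pieces are independent of the event $\{\tau_K^+=\infty\}$. I must also verify that the tail $\sigma$-algebra of the exit-point chain under the mixed initial law is trivial, which is where absolute continuity from Steps 1 and 2 is used.
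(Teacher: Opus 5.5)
Your proposal is correct and follows the paper's route: since $P_x[\cdot\,|\,\tau_K^+=\infty]\ll P_x$, tail-triviality passes to the conditioned laws, and the exit-point coupling of Lemma~\ref{lem:equiv_conditions_01_MC} then gives the result. The mixture $\tfrac12(P_x+P_y)$ is tail-trivial because, for $A\in\mathcal{T}_{MC}$, $z\mapsto P_z(A)$ is harmonic and $\{0,1\}$-valued, hence constant by irreducibility. The only difference is how the lemma is used: the paper applies Lemma~\ref{lem:equiv_conditions_01_MC} directly to the Doob $h$-transformed chain after coupling the first step arbitrarily, whereas you rerun its proof on the original exit-point chain with a modified initial law. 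Your observation that $\{\tau_K^+=\infty\}$ is measurable with respect to the path up to $\lambda_{K_0}$ already settles the ``main obstacle'' you flag.
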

\begin{proof}
  Consider the Markov chain on $\{x\in \mathbb{V}: P_x(\tau_K=\infty)>0\}$ obtained by conditioning the Markov chain $X$ on the event $\tau_K=\infty$. This Markov chain inherits the tail-triviality of $X$ and Lemma \ref{lem:equiv_conditions_01_MC} applies. 
  Now, to build the coupling in the corollary, just choose any way to couple the first step of $X$ and $Y$, and starting from $X_1$ and $Y_1$ couple $X$ and $Y$ such that they coincide eventually up to time-shift. 
\end{proof}

\subsection{Proof in the case of tail-triviality}\label{sec:proof_tail_triviality}

We now prove Theorem \ref{th:tail_triviality}. The proof follows the idea of \cite{Collin2024}.

\begin{proof}[Proof of Theorem \ref{th:tail_triviality}]
  We assume that the 0-1 law for $\mathcal{T}_{MC}$ holds and we show that the condition of Lemma \ref{lem:criterion_dTV_to_0_law} is satisfied. Let $K$ be a finite vertex set and $\eta$ an admissible configuration inside $K$ consisting of a single path. 
  Let $\gep>0$. 
  Our aim is to find a finite vertex set $M$ and to construct a coupling $(\omega, \omega')$ of $\p( \cdot | \Hin_K =\eta)$ and $\p( \cdot | \Hin_K = 0 )$ such that $\Out_M(\omega)=\Out_M(\omega')$ with probability not smaller than $1-\epsilon$.

  By Lemma \ref{lem:cap_to_infty}, we pick $L$ a vertex set containing $K$ and such that 
  \begin{equation}\label{eq:capa_L-capa_Kgeeps}
  \capa(L)-\capa(K)\ge \epsilon^{-2}.
  \end{equation}
  We observe that the number of trajectories in $\omega$ hitting $L$ is distributed as $Poi(\capa(L)-\capa(K))+1$, while for $\omega'$, this number is distributed as $Poi(\capa(L)-\capa(K))$. Using \eqref{eq:capa_L-capa_Kgeeps} and Lemma \ref{lem:dTV_Poisson}, we construct $\In_L(\omega)$ and $\In_L(\omega')$ such that these numbers coincide with probability not smaller than $1- \epsilon / 2$. 
 
  Now, given $\In_L(\omega)$ and $\In_L(\omega')$ we construct $\omega$ and $\omega'$ as follows.
  If the number of paths in $\In_L(\omega)$ and in $\In_L(\omega')$ are equal,
  \begin{itemize}
    \item we arbitrarily pair each path in $\In_L(\omega)$ with a path in $\In_L(\omega')$, and for each of these pairs, we construct thanks to Corollary \ref{cor:coincide_up_time_shift} the extensions of these paths so that they coincide eventually outside a (random) finite vertex set,
    \item we sample identically the trajectories of $\omega$ and $\omega'$ that do not touch $L$.
  \end{itemize}
 Otherwise, we construct $\Out_L(\omega)$ and $\Out_L(\omega')$ independently of each other.

 For this coupling, we have, for a large enough finite vertex set $M$, that $\omega$ and $\omega'$ coincide outside $M$ with probability not smaller than $1-\epsilon$. The proof is complete.
\end{proof}
  

\subsection{Proof in the case of tail-atomicity}\label{sec:proof_tail_atomicity}

We now prove Theorem \ref{th:tail_atomicity}. The proof relies on a decomposition of the interlacement process $\omega$ as a sum of (independent) Poisson point processes similar to the interlacement process under the assumption of tail-triviality. 

\begin{proof}[Proof of Theorem \ref{th:tail_atomicity}]
We assume that $\mathcal{T}_{MC}$ is purely atomic, so we consider a collection $\mathcal{C}$ of events in $\mathcal{T}_{MC}$ that, for arbitrary $x$ in $\mathbb{V}$, are atoms of $(W^+, \mathcal{T}_{MC}, P_x)$ and form a partition of $W^+$. 

For every $A, B \in \mathcal{C}$, let us denote by $\omega_{|A\to B}$ the restriction of $\omega$ to trajectories belonging to the event $``A\to B''$ (defined in the end of Section \ref{sec:intro_01laws}). 
Naturally, the random variable $\omega_{|A\to B}$ is a Poisson point process with intensity measure $\nu \ind_{A\to B}$. 
Let us now describe $\omega_{|A\to B}$ similarly as we did in Section \ref{sec:def_interl_process} for the complete interlacement process $\omega$. 
For every finite vertex set $K$, let
\begin{equation}
  e_{K, A\to B}(x):=
  \begin{cases}
    a_x P_x[A, \tau_K^+=\infty]  P_x[B] & \text{if } x\in  K,\\ 
    0 & \text{otherwise},
  \end{cases}
\end{equation}
\begin{equation}
  {\capa}_{A\to B}(K):=e_{K, A\to B}(\mathbb{V})= \sum_{x\in K} a_x P_x[A, \tau_K^+=\infty]  P_x[B],
\end{equation}
and
\begin{equation}
  \harm_{K, A\to B}(\cdot):= \frac{e_{K, A\to B}(\cdot)}{{\capa}_{A\to B}(K)}\, .
\end{equation}

To sample the trajectories of $\omega_{|A\to B}$ touching $K$:
\begin{itemize}
\item sample a number $\xi$ as a Poisson variable of parameter $\capa_{A\to B}(K)$,
\item independently for each $i=1,  \dots, \xi$, sample a point $x_i$ under $\harm_{K, A\to B}$,
\item independently for each $i=1, \dots, \xi$, attach to $x_i$ a trajectory that enters in $K$ through $x$, at time 0:
\begin{itemize}
  \item for the negative indices, sample a trajectory under $P_x[ \cdot |A, \tau_K^+=\infty]$ and apply the function $n\mapsto -n$ to the indices,
  \item for the positive indices, sample a trajectory under $P_x[ \cdot |B ]$.
\end{itemize}
\end{itemize}

We can also formulate this in terms of hinge process. Define 
\begin{equation}
h_{K, A\to B}(x, y)= e_{K, A\to B}(x) \times P_x[X_{\lambda_K}= y | B ] = a_x P_x[A, \tau_K^+=\infty]  P_x[B, X_{\lambda_K}= y] .
\end{equation}
To sample the trajectories of $\omega_{|A\to B}$ touching $K$,
\begin{itemize}
\item sample first the hinge process, as a Poisson point process with intensity $h_{K, A\to B}$.
\item to each hinge couple $(x, y)$, attach a trajectory $w^*$ which decomposes as $In_K(w)$ and $Out_K(w)=(Out_{K\rightarrow} (w), Out_{\rightarrow K}(w))$:
\begin{itemize}
\item $In_K(w)$ is sampled according to $P_x[ (X_0, X_1, \dots, X_{\lambda_K})\in \cdot | B, X_{\lambda_K}=y]$ 
\item $Out_{K\rightarrow} (w)$ is obtained by applying a time reversal to a trajectory sampled according to $P_x[ \cdot | A, \tau^+_K=\infty]$,
\item $Out_{\rightarrow K}(w)$ is sampled according to $P_y[ \cdot | B, \tau^+_K=\infty]$.
\end{itemize} 
\end{itemize}
For $K\subset L$, sampling $\In_L(\omega_{A\to B})$ given $\In_K(\omega_{A\to B})$ is also very similar to what we presented in Section \ref{sec:hinge_decomp}. 
The crucial observation is that the past pieces of trajectory are sampled under the Markov chain $X$ conditioned on $A$ while the future pieces are sampled under the Markov chain $X$ conditioned on $B$. Since $A$ and $B$ are atoms of $\mathcal{T}_{MC}$, we note that the two Markov chains obtained after these conditionings are tail-trivial, so that Corollary \ref{cor:coincide_up_time_shift} applies.

\smallskip

We now conclude the proof of Theorem \ref{th:tail_atomicity}. 
Consider $A, B \in \mathcal{C}$. Assume that $\nu(A\to B)$ is infinite. Then we reproduce the proof of Theorem \ref{th:tail_triviality} in Section \ref{sec:proof_tail_triviality}, to show that $\omega_{|A\to B}$ satisfies the 0-1 law under $\mathbb{P}$: for a given vertex set $K$ and a given admissible finite path inside $K$, we want to construct a coupling of two copies of $\omega_{|A\to B}$, one conditioned on having exactly this path inside $K$ and one on having no trajectories touching $K$, such that they coincide outside a large vertex set with large probability. We proceed as follows:
\begin{itemize}
\item since $\capa_{A\to B}(K)\underset{K \to \mathbb{V} }\longrightarrow \nu(A\to B) = \infty$, we find a large enough finite vertex set $L$ and make the number of trajectories hitting $L$ in both copies of $\omega_{|A\to B}$ coincide with large probability,
\item we arbitrarily pair the trajectories touching $L$ between both copies of $\omega_{|A\to B}$ and use Corollary \ref{cor:coincide_up_time_shift} for the Markov chain $X$ conditioned respectively on $A$ and on $B$ to make them coincide outside a large vertex set $M$, with large probability,
\item we sample the trajectories that do not touch $L$ identically in both copies of $\omega_{|A\to B}$.
\end{itemize}
We deduce that the variable $\omega_{|A\to B} $ satisfies the 0-1 law under $\mathbb{P}$.

Now, if $\nu(A\to B)$ is infinite for every $A, B \in \mathcal{C}$, then all the variables $(\omega_{|A\to B})_{A,B  \in \mathcal{C} }$ satisfy the 0-1 law under $\mathbb{P}$, and so does their sum $\omega$.
This concludes the proof of Theorem \ref{th:tail_atomicity}.
\end{proof}

\begin{remark}
We stress that the variables $(\omega_{|A\to B})_{A,B  \in \mathcal{C} }$ form an independent family, although this does not play a role in the proof. 
\end{remark}

\section{Quantitative approach}\label{sec:quantitative}

\subsection{Preliminaries}

\subsubsection{Necessary and sufficient condition}\label{sec:necessary_and_sufficient_for_quantitative}

For any finite vertex set $K$, 
\begin{itemize}
\item we call admissible hinge couple on $K$ any couple $(x,y)\in \partial_X K \times \partial_X K $;
\item we call admissible hinge configuration on $K$ any configuration $\rho$ such that $\p(\Hin_K=\rho)>0$, that is, any configuration of the form $\sum_{i=1}^k \delta_{(x_i,y_i)}$, where $k$ is a finite integer (possibly zero), and for each $i=1, \dots, k$, $(x_i,y_i)$ is an admissible hinge couple on $K$. 
\end{itemize}

\begin{lemma}\label{lem:criterion_dTV_to_0_law}
  The 0-1 law for $\mathcal{T}_{RI}$ holds if and only if for every finite vertex set $K$, for every admissible hinge configuration $\rho$ on $K$, as $L\to \mathbb{V}$,
  \begin{equation}\label{eq:criterion_d_TV_cond}
    d_{TV}\left(\mathbb{P}(\Hin_L \in \cdot | \Hin_K=\rho),
    \mathbb{P}(\Hin_L \in \cdot | \Hin_K=0) \right) \to 0.
  \end{equation}
  Furthermore, the condition can be restricted to $\rho$'s that consist of a single hinge couple.
\end{lemma}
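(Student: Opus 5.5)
The plan is to prove the two directions separately, and to get necessity by recasting the hinge processes along an exhaustion as a time-inhomogeneous Markov chain, to which Lemma \ref{lem:general_equiv_01law} applies.

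\emph{Sufficiency.} By Remark \ref{rem:rewrite_d_TV_cond}, the spatial Markov property of Section \ref{sec:hinge_decomp} gives
\[
\mathbb{P}(\Out_L\in\cdot\,|\,\In_K=\eta)=\mathbb{P}(\Out_L\in\cdot\,|\,\Hin_K=\rho),
\]
where $\rho$ is the hinge configuration of $\eta$. Moreover, $\Out_L$ depends on $\Hin_K$ only through $\Hin_L$. So the total variation distances in Lemma \ref{lem:suf_cond_0_1_law_distance} and in \eqref{eq:criterion_d_TV_cond} coincide, and condition \eqref{eq:criterion_d_TV_cond} implies the 0-1 law for $\mathcal{T}_{RI}$.

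The restriction to single hinge couples uses the same argument as in Lemma \ref{lem:suf_cond_0_1_law_distance}. Take the paths attached to $\rho$ and extend them identically in both processes. This shows that adding a couple $(x,y)$ to $\rho$ moves the law of $\Hin_L$ by at most the distance obtained for $\delta_{(x,y)}$ versus $0$. Induction on the number of couples, together with the triangle inequality, then finishes this part.

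\emph{Necessity.} Assume the 0-1 law for $\mathcal{T}_{RI}$, and fix $K$ and an admissible $\rho$. Choose an exhaustion $(K_n)_{n\ge0}$ with $K_0=K$ and set $Z_n:=\Hin_{K_n}$. The steps are as follows.

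\begin{enumerate}
\item $Z$ takes values in a countable set, namely finite point measures on finite sets of couples.
\item $Z$ is a time-inhomogeneous Markov chain. Going from $K_n$ to $K_{n+1}$, the description of Section \ref{sec:hinge_decomp} extends each existing path using only its extremities, i.e. its hinge couple, plus fresh independent randomness. It also adds independent new trajectories.
\item The tail of $Z$ lies in $\mathcal{T}_{RI}$. For $m\ge n$ and a trajectory touching $K_n$, its first entrance into $K_m$ happens at or before $\tau_{K_n}$, and its last exit from $K_m$ happens at or after $\lambda_{K_n}$. Hence both points can be read off $Out_{K_n}(w)$. Trajectories not touching $K_n$ appear unchanged in $\Out_{K_n}$. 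Therefore $\sigma(Z_m,m\ge n)\subset\sigma(\Out_{K_n})$. Since $\Out_{K'}$ is a function of $\Out_{K_n}$ whenever $K'\supset K_n$, intersecting over $n$ gives $\bigcap_n\sigma(Z_m,m\ge n)\subset\mathcal{T}_{RI}$.
\item The tail of $Z$ is therefore trivial under $\mathbb{P}$, and the implication (ii)$\Rightarrow$(i) of Lemma \ref{lem:general_equiv_01law} applies with $n=0$, $x=\rho$, $y=0$. Both $\rho$ and $0$ have positive probability, and the initial law is handled as in that lemma by conditioning on $Z_0$. This yields \eqref{eq:criterion_d_TV_cond} along the particular exhaustion $(K_n)$.
\end{enumerate}

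To pass from one exhaustion to the full limit $L\to\mathbb{V}$, I would show that
\[
d(L):=d_{TV}\bigl(\mathbb{P}(\Hin_L\in\cdot\,|\,\Hin_K=\rho),\,\mathbb{P}(\Hin_L\in\cdot\,|\,\Hin_K=0)\bigr)
\]
is non-increasing in $L\supset K$. This holds because, for $L'\supset L$, $\Hin_{L'}$ is obtained from $\Hin_L$ through the same Markov kernel under both conditionings, and a common kernel cannot increase total variation. Now take any exhaustion $(L_j)$ and any $m$. Since $K_m$ is finite, $L_j\supset K_m$ for all large $j$, so $d(L_j)\le d(K_m)$ eventually. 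Letting $m\to\infty$ gives $d(L_j)\to0$.

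The main obstacle I expect is step 3: checking carefully that $\Hin_{K_m}$ is determined by $\Out_{K_n}$, including the boundary cases where the hinge point of $K_m$ coincides with $w_{\tau_{K_n}}$ or $w_{\lambda_{K_n}}$. Step 2, the exact Markov property of $(Z_n)$, also needs care.
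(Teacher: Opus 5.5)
Your proposal is correct and follows the paper's proof. Sufficiency is Lemma \ref{lem:suf_cond_0_1_law_distance} combined with Remark \ref{rem:rewrite_d_TV_cond}, and necessity comes from applying (ii)$\Rightarrow$(i) of Lemma \ref{lem:general_equiv_01law} to the chain of hinge processes along an exhaustion. The paper takes $L_0=\emptyset$, $L_1=K$, which makes the initial state deterministic as that lemma literally requires and applies to any exhaustion directly. You instead start at $K_0=K$ and add a monotonicity argument, and you also check the Markov property and the inclusion of the tail in $\mathcal{T}_{RI}$, which the paper leaves implicit.
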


\begin{proof}
  We have already shown that \eqref{eq:criterion_d_TV_cond} implies the 0-1 law for $\mathcal{T}_{RI}$: this is Lemma \ref{lem:suf_cond_0_1_law_distance}, together with Remark \ref{rem:rewrite_d_TV_cond}. 
  
  To show the converse implication, we use Lemma \ref{lem:general_equiv_01law}. Assume that the 0-1 law for $\mathcal{T}_{RI}$ holds and consider a finite vertex set $K$, an admissible hinge configuration $\rho$ on $K$, and an exhaustion $(L_n)_{n\ge 0}$ of $\mathbb{V}$. Assume without loss of generality that $L_0=\emptyset$, $L_1=K$. Apply Lemma \ref{lem:general_equiv_01law} to the time-inhomogeneous Markov chain $(\Hin_{K_n})_{n\ge 0}$ to obtain:
  \begin{equation}
    d_{TV}\left(\mathbb{P}(\Hin_{L_n} \in \cdot | \Hin_K=\rho),
    \mathbb{P}(\Hin_{L_n} \in \cdot | \Hin_K=0) \right) 
    \underset{n\to\infty}\longrightarrow 0.
  \end{equation}
  This concludes the proof of the lemma.
\end{proof}


\subsubsection{Distance between Poisson point processes}

If $\nu$ is a finite measure on a finite set $\mathcal{X}$, let us denote by $PPP(\nu)$ the law of a Poisson point process on $\mathcal{X}$ with intensity measure $\nu$. If furthermore, $\pi$ is a probability measure on $\mathcal{X}$, let us denote by $PPP(\nu) \oplus \pi$ the law of the superposition of this same Poisson point process and a random point in $\mathcal{X}$ sampled independently under $\pi$. 

\begin{lemma}\label{lem:general_cond_dTVto0_for_PPP}
For every $n\ge 1$ we consider a finite set $\mathcal{X}_n$, and two finite measures $\nu_n$ and $\pi_n$ on $\mathcal{X}_n$, $\pi_n$ being a probability measure. Then, 
\begin{equation}
  d_{TV}(PPP(\nu_n) \oplus \pi_n  , PPP(\nu_n) ) \underset{n\to\infty}{\longrightarrow} 0
\end{equation}
if and only if, for every $\epsilon>0$, 
\begin{equation}
  \sum_{x\in \mathcal{X}_n} 
  \left(\pi_n(x)-\epsilon \nu_n(x)\right)_+  
  \underset{n\to\infty}{\longrightarrow} 0\, .
\end{equation}
\end{lemma}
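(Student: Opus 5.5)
The plan is to make the law $PPP(\nu_n)\oplus\pi_n$ explicit as a density with respect to $PPP(\nu_n)$, and then to treat the two implications separately. Throughout, $N$ denotes a configuration sampled under $PPP(\nu_n)$; since $\mathcal{X}_n$ is finite, $N$ is a vector of independent counts $N(x)\sim Poi(\nu_n(x))$.

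\emph{Density.} Write $\mathbb{P}_0$ for the law $PPP(\nu_n)$ and $\mathbb{P}_1$ for $PPP(\nu_n)\oplus\pi_n$. For a configuration $\eta$ and a site $x$ with $\nu_n(x)>0$, the Poisson weights give
\begin{equation}
\mathbb{P}_0(N=\eta-\delta_x)=\mathbb{P}_0(N=\eta)\,\frac{\eta(x)}{\nu_n(x)} .
\end{equation}
Sites with $\nu_n(x)=0$ but $\pi_n(x)>0$ create a singular part. They fall under the case analysis below with $\pi_n(x)>\epsilon\nu_n(x)$, so I ignore them here. It follows that
\begin{equation}
\frac{d\mathbb{P}_1}{d\mathbb{P}_0}(\eta)=Y(\eta):=\sum_{x}\pi_n(x)\frac{\eta(x)}{\nu_n(x)}, \qquad d_{TV}(\mathbb{P}_1,\mathbb{P}_0)=\mathbb{E}_0[(1-Y)_+],
\end{equation}
where $\mathbb{E}_0[Y]=1$.

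\emph{Sufficiency.} Fix $\epsilon>0$ and split $\mathcal{X}_n$ into $A_n=\{\pi_n>2\epsilon\nu_n\}$ and its complement $B_n$. On $A_n$ we have $\pi_n-\epsilon\nu_n\ge\pi_n/2$, so
\begin{equation}
\pi_n(A_n)\le 2\sum_x(\pi_n(x)-\epsilon\nu_n(x))_+\to 0 .
\end{equation}
Let $Y_B$ be the part of the sum defining $Y$ restricted to $x\in B_n$. Then $Y_B\le Y$, $\mathbb{E}_0[Y_B]=\pi_n(B_n)$, and
\begin{equation}
\mathrm{Var}(Y_B)=\sum_{x\in B_n}\frac{\pi_n(x)^2}{\nu_n(x)}\le 2\epsilon .
\end{equation}
Hence, by Cauchy--Schwarz,
\begin{equation}
d_{TV}\le\mathbb{E}_0[(1-Y_B)_+]\le\mathbb{E}_0|1-Y_B|\le \pi_n(A_n)+\sqrt{2\epsilon}.
\end{equation}
This is the same second-moment trick as in Lemma \ref{lem:dTV_Poisson}. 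Letting $n\to\infty$ and then $\epsilon\to 0$ gives $d_{TV}\to 0$.

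\emph{Necessity.} This is the step I expect to need a well-chosen test event; the plan is to take the hitting event of the "bad" set. Suppose instead that for some $\epsilon>0$ and $\delta>0$ we have, along a subsequence,
\begin{equation}
\sum_x(\pi_n(x)-\epsilon\nu_n(x))_+\ge\delta .
\end{equation}
Let $A_n=\{\pi_n>\epsilon\nu_n\}$. Then $\pi_n(A_n)\ge\delta$, and $\nu_n(A_n)\le\pi_n(A_n)/\epsilon\le 1/\epsilon$. This uniform bound on $\nu_n(A_n)$ is the crucial point. Take the test event $E=\{N(A_n)\ge1\}$. Under $\mathbb{P}_1$ the event $E$ occurs if the added point lands in $A_n$, or otherwise if the Poisson part hits $A_n$. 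This gives
\begin{equation}
\mathbb{P}_1(E)-\mathbb{P}_0(E)=\pi_n(A_n)\,\mathbb{P}_0(E^c)=\pi_n(A_n)e^{-\nu_n(A_n)}\ge\delta e^{-1/\epsilon}.
\end{equation}
So $d_{TV}$ stays bounded away from $0$ along the subsequence, which contradicts the hypothesis and completes the equivalence.
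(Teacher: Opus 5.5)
Your proof is correct. For necessity it is essentially the paper's argument, and for sufficiency it takes a genuinely different route.

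\textbf{Sufficiency.} The paper uses no densities. It realises $PPP(\nu)$ from a Poisson process on $\mathcal{X}\times[0,\infty)$ and changes the number of points in $D=\{(x,u):u\le \pi(x)/\epsilon\wedge\nu(x)\}$ from $Poi(\rho(D))$ to $Poi(\rho(D))+1$. This produces $PPP(\nu)$ plus an independent point with law proportional to $\pi\wedge\epsilon\nu$, which lies within total variation distance $a=\sum_x(\pi(x)-\epsilon\nu(x))_+$ of $\pi$. Lemma~\ref{lem:dTV_Poisson} then gives $d_{TV}\le\frac12\sqrt{\epsilon/(1-a)}+a$.

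You instead write down the likelihood ratio $Y=\sum_x\pi_n(x)\eta(x)/\nu_n(x)$ and bound the variance of its truncation to $\{\pi_n\le 2\epsilon\nu_n\}$. This gives the comparable bound $d_{TV}\le 2a+\sqrt{2\epsilon}$. It is the multi-site version of the Cauchy--Schwarz step inside Lemma~\ref{lem:dTV_Poisson}, where $X/\lambda$ is exactly the likelihood ratio of $Poi(\lambda)+1$ to $Poi(\lambda)$.

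Your route is fully explicit and needs no auxiliary construction. The paper's route is a coupling that identifies which point is the extra one. That is the form invoked later in the proof of Lemma~\ref{lem:for_01_increasing}, though it can also be recovered from your total variation bound through a maximal coupling.

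\textbf{Necessity.} You use the same statistic as the paper: the number of points in $A_n=\{\pi_n>\epsilon\nu_n\}$. Your test event $\{N(A_n)\ge 1\}$ gives the explicit lower bound $\pi_n(A_n)e^{-\nu_n(A_n)}\ge\delta e^{-1/\epsilon}$. This replaces the paper's compactness argument for $D_{\epsilon,a}$.

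\textbf{Singular sites.} Your handling of sites with $\nu_n(x)=0<\pi_n(x)$ is fine. The formula $d_{TV}=\mathbb{E}_0[(1-Y)_+]$ still holds with $Y$ the density of the absolutely continuous part. Only $\mathbb{E}_0[Y]=1$ fails there, and you never use it.
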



\begin{proof}
  We consider a finite state space $\mathcal{X}$, and two measures $\nu$ and $\pi$ on $\mathcal{X}$, $\pi$ being a probability measure. 

  Let us first show, for every  $\epsilon>0$ and $a>0$, that if $\sum_{x\in \mathcal{X}} 
  \left(\pi(x)-\epsilon \nu(x)\right)_+  \le a$, then  
\begin{equation}\label{eq:dTV_PPP_upp}
  d_{TV}( PPP(\nu) \oplus \pi, PPP(\nu)) \le \frac{1}{2}\sqrt{\frac{\epsilon}{1-a}} + a ,
\end{equation}
To prove this, we construct $PPP(\nu)$ using an auxiliairy Poisson point process. We consider $\sum_{i\ge 1} \delta{(x_i, u_i)}$ a Poisson point process on $\mathcal{X} \times [0, \infty)$ with intensity measure $\rho=\mathbf{c} \otimes du$, where $\mathbf{c}:=\sum_{x\in \mathcal{X}} \delta_x$ is the counting measure on $\mathcal{X}$ and $du$ is the Lebesgue measure on $(0, \infty)$. 
Then 
$\omega_\nu:=\sum_{i\ge 1: u_i\le \nu(x_i)} \delta_{x_i}$ 
realises $PPP(\nu)$. 
Furthermore, let us consider the domain
$$D=\left\{(x,u)\in \mathcal{X}\times (0, \infty): u\le \frac{\pi(x)}{\epsilon} \wedge \nu(x)\right\}.$$
Let us tilt $\omega_\nu$ only by changing the law of the number of points falling in $D$: let us make it follow the law of $Poi(\rho(D))+1$ instead of the law $Poi(\rho(D))$. 
Then, we obtain a point process which is the superposition of a $PPP(\nu)$ and a point sampled under the renormalisation of the measure $\frac{\pi(\cdot)}{\epsilon} \wedge \nu(\cdot)$. This last distribution is at total-variation distance not larger than $a$ from $\pi$.
Thus:
\begin{equation}
  d_{TV}( PPP(\nu) \oplus \pi, PPP(\nu)) \le d_{TV}(Poi(\rho(D)), Poi(\rho(D))+1)+ a ,
\end{equation}
We conclude the proof of \eqref{eq:dTV_PPP_upp} by using Lemma \ref{lem:dTV_Poisson} and observing that $\rho(D) \ge \frac{1-a}{\epsilon}$.

\medskip

Let us furthermore show for every $\epsilon >0$ and $a>0$ the existence of a positive real $D_{\epsilon, a}$ such that if 
$\sum_{x\in \mathcal{X}} \left(\pi(x)-\epsilon \nu(x)\right)_+  \ge a$, then 
  \begin{equation}\label{eq:dTV_PPP_low}
  d_{TV}( PPP(\nu) \oplus \pi, PPP(\nu)) \ge D_{\epsilon, a}.
\end{equation}
Indeed, let us consider $A:=\{ x \in \mathcal{X}: \pi(x)  > \epsilon \nu(x)\}$. We have by assumption $\pi(A) \ge \epsilon \nu(A) +a$. Considering the number of points falling in $A$, we get the lower bound:
$$d_{TV}( PPP(\nu) \oplus \pi, PPP(\nu))
\ge d_{TV}( Poi(\nu(A))+Ber(\pi(A)), Poi(\nu(A))) 
\ge D_{\epsilon, a} \, , $$
where we denote by $Poi(\lambda)+Ber(p)$ the law of the independent sum of a Poisson variable of parameter $\lambda$ and a Bernoulli variable of parameter $p$ and we set 
$$D_{\gep, a}  := 
\inf_{\lambda \ge 0, \,  p\in [0, 1]:\, \epsilon \lambda +a\le p} 
d_{TV}(Poi(\lambda)+Ber(p), Poi(\lambda)).$$
We observe that the function 
$$ (\lambda,p) \mapsto  d_{TV}(Poi(\lambda)+Ber(p), Poi(\lambda))$$
defined on the compact   
$\{(\lambda, p): \lambda \ge 0, p\in [0, 1], \epsilon \lambda+a\le p \}$ 
is continuous and positive, hence $D_{\gep, a}>0$. 
Thus, \eqref{eq:dTV_PPP_low} is established and the lemma follows from \eqref{eq:dTV_PPP_upp} and \eqref{eq:dTV_PPP_low}.
\end{proof}

\subsection{Proof of the quantitative criterion}\label{sec:proof_quantitative}

We now prove Theorem \ref{th:quantitative_criterion}.

\begin{proof}[Proof of Theorem \ref{th:quantitative_criterion}]
Assume that the 0-1 law for $\mathcal{T}_{RI}$ holds under $\mathbb{P}$. Consider a vertex $x$ in $\mathbb{V}$, set $K=\{x\}$ and consider the hinge couple $(x,x)$ on $K$. Applying Lemma \ref{lem:criterion_dTV_to_0_law}, we get the convergence towards zero, as $L\to\mathbb{V}$, of the total variation distance between 
\begin{itemize}
  \item the law 
  $\mathbb{P}(\Hin_L \in \cdot | \Hin_K=0)$, that is, a Poisson point process with intensity measure 
  $h_{L}^{x}(x',y') := h_L(x',y') P_{x'}(\tau_x=\infty|X_{\lambda_L}=y')$,
  \item and the law 
  $\mathbb{P}(\Hin_L \in \cdot | \Hin_K=\delta_{(x,x)})$, that is, the law of the superposition of the above Poisson point process and an independent point sampled under the probability 
  $p_{L}^{x}(x', y'):= P_x(X_{\lambda_L}= x'| \tau_x^+=\infty) P_x(X_{\lambda_L}= y'| \tau_x^+=\infty)$.
\end{itemize}
Applying Lemma \ref{lem:general_cond_dTVto0_for_PPP} for any exhaustion $(L_n)_{n\ge 0}$ of $\mathbb{V}$, this implies that for every $\epsilon>0$, 
\begin{equation}
  \sum_{x',y' \in \partial_X L} 
  \left(p_{L}^{ x}(x', y') 
  - \epsilon h_{L}^{x}(x',y') \right)_+  \underset{L \to \mathbb{V}}\longrightarrow 0.
\end{equation}
Bounding $h_{L}^{x}$ by $h_L$, this implies that  
\begin{equation}
  \sum_{x',y'\in  \partial_X L} 
  \left(p_{L}^{x}(x', y')  - \epsilon h_L(x', y')   \right)_+  
  \underset{L \to \mathbb{V}}{\longrightarrow} 0.
\end{equation}
Now, by reversibility:
\begin{equation}
  a_x  P_x[X_{\lambda_L}=x', \tau_x^+=\infty]    P_x[X_{\lambda_L}=y']  = a_{x'} P_{x'}[\tau_L=\infty]  P_{x'}[\tau_x < \infty, X_{  \lambda_L}=y'].
\end{equation}
We also note that $P_x[X_{\lambda_L} = z,\tau_x^+=\infty]= P_x[\tau_x^+=\infty] P_x[X_{\lambda_L} = z]$, for every $z\in \mathbb{V}$.
It follows that:
\begin{equation}\label{eq:rewrite_proba_induced_hinges_singleton}
 p_{L}^x(x', y') =  \frac{h_L(x', y')}{a_x P_x(\tau_x^+=\infty) } 
  P_{x'}[\tau_x < \infty | X_{\lambda_L}=y'].
\end{equation}
Recalling that $x$ is fixed, and playing on $\epsilon$, we deduce that for every $\epsilon>0$
\begin{equation}
  \sum_{x',y' \in \partial_X L} 
  h_L(x', y')
  \left(P_{x'}[\tau_x < \infty | X_{\lambda_L}=y']
  - \epsilon  \right)_+  \underset{L \to \mathbb{V}}{\longrightarrow} 0.
\end{equation}
This is the condition of Theorem \ref{th:quantitative_criterion}.

\medskip 

Conversely, let us assume that the condition of Theorem \ref{th:quantitative_criterion} holds, and let us pick a finite vertex set $K$, and $(x,y)$ an admissible hinge couple on $K$. Following Lemma \ref{lem:general_cond_dTVto0_for_PPP}, our aim is to show the convergence towards zero, as $L\to\mathbb{V}$, of the total variation distance between
\begin{itemize}
  \item the law 
  $\mathbb{P}(\Hin_L \in \cdot | \Hin_K=0)$, that is, a Poisson point process with intensity measure 
  $h_{L}^{K}(x',y') := h_L(x',y') P_{x'}(\tau_K=\infty|X_{\lambda_L}=y')$
  \item and the law 
  $\mathbb{P}(\Hin_L \in \cdot | \Hin_K=\delta_{(x,y)})$, that is, the law of the superposition of the above Poisson point process and an independent point sampled under the probability 
  $p_{L}^{K, x,y} (x', y'):= P_x(X_{\lambda_L}= x'| \tau_K^+=\infty) P_y(X_{\lambda_L}= y'| \tau_K^+=\infty)$.
\end{itemize}
By Lemma \ref{lem:general_cond_dTVto0_for_PPP}, it is enough to establish that for every $\epsilon>0$, 
\begin{equation}\label{eq:sum_pLKxy-epshLK_to0}
  \sum_{x',y' \in \partial_X L} 
  \left(p_L^{K, x, y}(x',y') 
  - \epsilon h_L^{K}(x',y') \right)_+  \underset{L \to \mathbb{V}}{\longrightarrow} 0.
\end{equation}
By reversibility:
\begin{equation}
  \begin{aligned}
  a_x &  \p_x[X_{\lambda_L}=x', \tau_K^+=\infty]   \p_x[X_{\lambda_K}=y] \p_y[X_{\lambda_L}=y' | \tau_K^+=\infty] \\
   =  & a_{x'} \p_{x'}[\tau_L^+=\infty]
   \p_{x'}[\tau_K < \infty, X_{\tau_K} = x, X_{\lambda_K}=y, X_{\lambda_L}=y']
  \end{aligned}
\end{equation}
hence
\begin{equation}
  p_L^{K, x, y}(x',y')= \frac {h_L(x', y')}{ h_K(x, y)} 
  \p_{x'}[\tau_K < \infty, X_{\tau_K} = x, X_{\lambda_K}=y| X_{\lambda_L}=y'],
\end{equation}
and therefore, playing on $\epsilon$, \eqref{eq:sum_pLKxy-epshLK_to0} is equivalent to the fact that for every $\epsilon>0$, 
\begin{equation}
  \begin{aligned}
    \sum_{x', y'\in \partial_X L} &  h_L(x', y') \\
    &\times \big(  P_{x'}[\tau_K<\infty, X_{\tau_K}=x, X_{\lambda_K}=y |X_{\lambda_L}=y']  -\epsilon P_{x'}(\tau_K=\infty|X_{\lambda_L}=y') \big)_+ 
  \end{aligned}
  \end{equation}
vanishes as $L \to \mathbb{V}$. Bounding $P_{x'}[\tau_K<\infty, X_{\tau_K}=x, X_{\lambda_K}=y  |X_{\lambda_L}=y']$ by $P_{x'}[\tau_K<\infty|X_{\lambda_L}=y']$, the latter follows from the fact that for every $\epsilon>0$,
\begin{equation}
    \sum_{x',y' \in \partial_X L} h_L(x', y') \left((1+\epsilon) P_{x'}[\tau_K<\infty | X_{\lambda_L}=y']-\epsilon \right)_+ \underset{L \to \mathbb{V}}{\longrightarrow} 0,
  \end{equation}
which, finally, can be derived elementary from the condition of Theorem \ref{th:quantitative_criterion} by using the bound:
\begin{equation}
  P_{x'}[\tau_K<\infty|X_{\lambda_L}=y'] \le \sum_{z\in K}  P_{x'}[\tau_z<\infty|X_{\lambda_L}=y']
\end{equation}
and playing on $\epsilon$.
\end{proof}

\subsection{Proof of the weak 0-1 law}

This section is devoted to showing Theorem \ref{th:weak_01law}, that is, the 0-1 law for events that depend, for every finite vertex set $K$, only on the future pieces of trajectory outside $K$. 

Let us observe that in the case of tail-atomicity, one could reproduce our proof (in Section \ref{sec:proof_tail_atomicity}) to show that this 0-1 law holds as soon as $\nu(W^+ \to A)\in \{0, \infty\}$ for every $A\in \mathcal{T}_{MC}$. 
And in fact, for $A$ an atom of $\mathcal{T}_{MC}$ under $P_x$ (for arbitrary $x$), applying Lemma \ref{lem:cap_to_infty} to the Markov chain $X$ conditioned on $A$ yields that $\nu (A \to A)=\lim_{K\to\mathbb{V}} \capa_{A\to A}(K)= \infty$ and a fortiori $\nu(W^+ \to A)=\infty$. 

\smallskip

Since we aim to prove Theorem \ref{th:weak_01law} in full generality, we will rather rely on the quantitative approach of the present Section \ref{sec:quantitative}. 
We will proceed in two steps. 
We will first prove that a condition similar to that of Theorem \ref{th:quantitative_criterion} holds in full generality. Then, we will prove Theorem \ref{th:weak_01law}, using the same scheme of proof as in Section \ref{sec:proof_quantitative}.

\begin{lemma}\label{lem:quantitative_weak}
  For every site $x\in \mathbb{V}$, for every $\epsilon>0$, as $L \to \mathbb{V}$,
  \begin{equation}
    \sum_{x'\in L} e_L(x') \left(P_{x'}[\tau_x < \infty]-\epsilon \right)_+ \to 0.
  \end{equation}
\end{lemma}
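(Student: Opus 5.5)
We need to show that $\sum_{x'\in L} e_L(x')\,\ind_{P_{x'}[\tau_x<\infty]>\epsilon}\to 0$ as $L\to\mathbb{V}$; by the bound $(a-b)_+\le \ind_{a>b}$ this suffices. The plan is to write this quantity as a hitting probability of a shrinking set and show that this probability vanishes.

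\textbf{Step 1: reformulate.} Fix $x$ and $\epsilon>0$, and set
$$A_\epsilon:=\{z\in\mathbb{V}: P_z[\tau_x<\infty]>\epsilon\}.$$
For a finite $L$, the sum is $e_L(L\cap A_\epsilon)$. By the consistency relation \eqref{eq:consistency_eq_meas}, or directly from the last-exit decomposition, for $z_0$ fixed and $L$ containing $z_0$,
$$e_L(B)=\sum_{z\in L}e_L(z)\,\ind_B(z),$$
and $e_L(z)/a_{z_0}$ is comparable to the probability that the chain started at $z_0$ has its last exit from $L$ at $z$. More precisely, reversibility gives $P_{z_0}[X_{\lambda_L}=z]=G(z_0,z)\,e_L(z)$, where $G(z_0,z)=G(z,z_0)a_{z_0}/a_z$ is the Green function. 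Weighted by $G$ this does not give exactly $e_L$. Instead I will use $x$ itself: $G(z,x)/G(x,x)=P_z[\tau_x<\infty]$, so for $z\in A_\epsilon$ we have $G(x,z)=\frac{a_x}{a_z}G(z,x)\ge \epsilon\, a_x G(x,x)/a_z$. Hence
$$P_x[X_{\lambda_L}\in A_\epsilon]=\sum_{z\in L\cap A_\epsilon}G(x,z)e_L(z)\ge \epsilon\,a_xG(x,x)\sum_{z\in L\cap A_\epsilon}\frac{e_L(z)}{a_z}.$$
This carries an unwanted factor $1/a_z$. To avoid it, I will work with the symmetric Green function $g(x,z)=G(x,z)/a_z$, for which $g(x,z)=g(z,x)=P_z[\tau_x<\infty]\,g(x,x)$. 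The last-exit formula then reads $P_x[X_{\lambda_L}=z]=g(x,z)\,e_L(z)$ with $e_L(z)=a_zP_z[\tau_L^+=\infty]$. Indeed, $\sum_n P_x[X_n=z]\,P_z[\tau_L^+=\infty]=G(x,z)P_z[\tau_L^+=\infty]=g(x,z)\,e_L(z)$. Therefore
$$P_x[X_{\lambda_L}\in A_\epsilon]=g(x,x)\sum_{z\in L\cap A_\epsilon}P_z[\tau_x<\infty]\,e_L(z)\ge \epsilon\, g(x,x)\sum_{z\in L\cap A_\epsilon}e_L(z).$$
So it suffices to show $P_x[X_{\lambda_L}\in A_\epsilon]\to0$ as $L\to\mathbb{V}$.

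\textbf{Step 2: the main point.} Under $P_x$, the chain is transient, so $P_z[\tau_x<\infty]$ evaluated along the path, namely $M_n=P_{X_n}[\tau_x<\infty]$, is a nonnegative supermartingale. It converges almost surely, and its limit is $0$: on the event that the limit is positive, the chain would return to $x$ infinitely often by a Lévy-type argument (the conditional probability of a future visit to $x$ stays bounded below, which forces infinitely many visits almost surely), contradicting transience. Thus, almost surely, $X_n\notin A_\epsilon$ for all large $n$. Since $\lambda_L\to\infty$ almost surely as $L\to\mathbb{V}$ (for any exhaustion, $\lambda_{L_n}$ is at least the hitting time of sites leaving every fixed finite set), we get $\ind_{X_{\lambda_L}\in A_\epsilon}\to0$ almost surely, and by dominated convergence $P_x[X_{\lambda_L}\in A_\epsilon]\to 0$.

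The main care is in Step 2: justifying that $\lambda_{L_n}\to\infty$ along every exhaustion. This holds because $X_{\lambda_{L_n}}$ eventually leaves every finite set, as $\lambda_{L_n}\ge\tau_{L_n^c}$-type times and $L_n$ grows. One must also check the last-exit identity with the correct normalisation. The martingale step is standard, e.g.\ via the Lévy 0-1 law applied to the event $\{\tau_x\circ\theta_n<\infty \text{ for infinitely many } n\}$, which has probability $0$ by transience.
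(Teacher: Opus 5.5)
Your proof is correct and follows the paper's route: reversibility and the last-exit decomposition bound $e_L(L\cap A_\epsilon)$ by a constant times $P_x[X_{\lambda_L}\in A_\epsilon]$, and one concludes because the chain eventually leaves $A_\epsilon$, which the paper shows by counting excursions that each hit $x$ with probability at least $\epsilon/2$ and you show with the supermartingale $M_n=P_{X_n}[\tau_x<\infty]$ (your identity with the factor $g(x,x)$ is exact, whereas the paper's displayed equality drops a harmless factor $P_x[\tau_x^+=\infty]\le 1$). In the write-up, delete the exploratory opening of Step 1, since the relations $G(x,z)=\frac{a_x}{a_z}G(z,x)$ and $P_{z_0}[X_{\lambda_L}=z]=G(z_0,z)e_L(z)$ are wrong for the unsymmetrized Green function; obtain $\lim M_n=0$ by Fatou from $E_x[M_n]=P_x[X_m=x \text{ for some } m\ge n]\to 0$ rather than by L\'evy's 0-1 law on the event of infinitely many visits, which only controls a conditional probability lying below $M_n$; and justify $\lambda_{L_n}\to\infty$ by noting that $X_N\in L_n$, hence $\lambda_{L_n}\ge N$, for all large $n$.
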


\begin{remark} \label{rem:rewrite_quantitative_weak}
  We stress that, due to \eqref{eq:consistency_eq_meas},
\begin{equation}
    \sum_{x'\in L} e_L(x') P_{x'}[\tau_x < \infty]= Cap(\{x\}).
  \end{equation}
The content of the lemma can therefore be interpreted as the fact that this sum is ``diluted'' enough, that there is not too much mass supported on some vertices $x$.  

Using the bounds $b \ind_{a > 2b} \le (a-b)_+ \le \ind_{a > b}$ for $a \in [0, 1], b \ge 0$, the condition in the theorem can be equivalently formulated as follows: for every site $x\in \mathbb{V}$, for every $\epsilon>0$, as $L\to \mathbb{V}$,
  \begin{equation}
    \sum_{x'\in  L} e_L(x') \ind_{P_{x'}[\tau_x < \infty ] >\epsilon} = e_L(\{x'\in L : P_{x'}[\tau_x < \infty ] >\epsilon \} )\to 0.
  \end{equation}
\end{remark}

\begin{proof}[Proof of Lemma \ref{lem:quantitative_weak}] 
  Let us fix $x\in \mathbb{V}$ and $\epsilon >0$. We consider the set 
  \[D:=\{y \in \mathbb{V} : P_y[\tau_x<\infty]> \epsilon\}.\]
  As observed in Remark \ref{rem:rewrite_quantitative_weak}, our aim is to show that $e_L(D)$ vanishes as $L\to\mathbb{V}$.
  
  Of course, if $D$ is finite this is immediate: as soon as $L$ contains $\overline{D}: =\{x\in \mathbb{V}:\exists y \in D, x\sim y\}$, we have $\partial_X L \cap D =\emptyset$ and thus $e_L(D)=0$. But $D$ can be infinite. Think for example of a simple random walk on $\mathbb{Z}$ with a positive drift and $x=0$: in this example, $P_y[\tau_x<\infty]=1$ for every $y\le 0$.

  Let us show that $D$ is however necessarily transient, i.e., that for every $z\in \mathbb{V}$, 
  $P_z[X_n \text{ visits } D \text{ infinitely many times}]=0$. 
  Our proof relies on the fact that $X$ visits $x$ almost surely finitely many times, due to transience.

  For any $y$ in $D$, we pick an integer $t_y$ such that $P_y[\tau_x <t_y]\ge \frac{\epsilon}{2}$. Then we decompose the trajectory of the Markov chain $X$ starting from $z$ in disjoint excursions:
  \begin{itemize}
  \item starting from $z$, we wait for the first visit to $D$, say at time $N_1$ and at site $Y_1$; the first excursion is the piece of trajectory from time $N_1$ to time $N_1+t_{Y_1}-1$;
  \item we reiterate: for $n\ge 1$, after the $n$-th excursion, we wait for the first visit to $D$, say at time $N_{n+1}$ and site $Y_{n+1}$, then we define the $(n+1)$-th excursion to be the piece of trajectory from time $N_{n+1}$ to time $N_{n+1}+t_{Y_{n+1}}-1$. 
  \end{itemize}
  There may be only finitely many excursions, meaning that at some point the trajectory stops visiting $D$; this is actually what we want to show happens almost surely.
  We observe that every excursion has a probability at least $\frac{\epsilon}{2}$ of visiting $x$. Thus, the expected number of excursions is not larger than $\frac{2}{\epsilon}$ times the expected number of visits to $x$, which is finite by transience of $X$. We deduce that the number of excursions is almost surely finite, hence $D$ is transient.

  To conclude, we observe, using the definition of $D$ and reversibility that
  \begin{align*}
  e_L(D) & =    \sum_{y\in D} a_y P_y[\tau_L^+=\infty]\\
    & \le \frac{1}{\epsilon} \sum_{y\in D} a_y P_y[\tau_L^+=\infty]  P_y[\tau_x<\infty] \\
    & = \frac{1}{\epsilon} \sum_{y\in D} a_x P_x[X_{\lambda_L}=y]
     = \frac{a_x}{\epsilon} P_x[X_{\lambda_L}\in D].
  \end{align*}
  The transience of $D$ implies that $P_x[X_{\lambda_L}\in D]$ vanishes as $L\to\mathbb{V}$ and the proof is complete.
\end{proof}

\begin{proof}[Proof of Theorem \ref{th:weak_01law}]
  For convenience, we prefer to show the 0-1 law for the $\sigma$-algebra of non-local backward events, mentioned in Remark \ref{rem:backward_weak} and defined by focusing on the past pieces of trajectory $Out_{\rightarrow K}(w)$ instead of the future pieces $Out_{K\rightarrow}(w)$. As justified in that remark, these two ``weak'' 0-1 laws are equivalent.

  Analogously to Lemma \ref{lem:criterion_dTV_to_0_law} (or Lemma \ref{lem:suf_cond_0_1_law_distance}), it is enough to show that for every finite vertex set $K$, for every $x\in \partial_X K$, as $L\to\mathbb{V}$,
  \begin{equation}\label{eq:criterion_d_TV_cond_bis}
    d_{TV}\left(\mathbb{P}(\Hin_{\to L} \in \cdot | \Hin_{\to K}=\delta_x),
    \mathbb{P}(\Hin_{\to L} \in \cdot | \Hin_{\to K}=0) \right) \to 0.
  \end{equation}
  where the random variable $\Hin_{\to K}$ is defined as
  \begin{equation}
    \Hin_{\to K}(\omega)= \sum_{i\ge 1} \delta_{X_{\tau_K(w_i)}}
  \end{equation}
  if $\omega = \sum_{i\ge 1} \delta_{w_i^*}$ and  $w_i$ is a representative of $w_i^*$ for each $i\ge 1$. 
  Now,
  \begin{itemize} 
    \item  $\mathbb{P}(\Hin_{\to L} \in \cdot | \Hin_{\to K}=0)$ is the law of Poisson point process on $\mathbb{V}$ with intensity measure $e_L^K (x') := e_L(x')P_{x'}[\tau_K=\infty]$,
    \item while $\mathbb{P}(\Hin_{\to L} \in \cdot | \Hin_{\to K}=\delta_x)$ is the law of the superposition of this Poisson point process and an independent point in $\mathbb{V}$ sampled under $p_L^{K, x} (x') := P_x[X_{\lambda_L}=x'|\tau_K^+=\infty]$.
  \end{itemize}
  By Lemma \ref{lem:general_cond_dTVto0_for_PPP}, it is enough to establish that for every $\epsilon>0$, 
\begin{equation}\label{eq:sum_pLKxy-epshLK_to0_weak}
  \sum_{x'  \in  L} 
  \left(p_L^{K, x}(x') 
  - \epsilon \, e_L^{K}(x') \right)_+ \underset{L \to \mathbb{V}}{\longrightarrow}.
\end{equation}
By reversibility:
\begin{equation}
  a_x \p_x[X_{\lambda_L}=x', \tau_K^+=\infty]  = a_{x'} \p_{x'}[\tau_L^+=\infty] 
   \p_{x'}[\tau_K < \infty, X_{\tau_K} = x]
\end{equation}
hence
\begin{equation}
  p_L^{K, x}(x')= \frac {e_L(x')}{ e_K(x)} 
  \p_{x'}[\tau_K < \infty, X_{\tau_K} = x],
\end{equation}
and therefore, playing on $\epsilon$, \eqref{eq:sum_pLKxy-epshLK_to0_weak} is equivalent to the fact that for every $\epsilon>0$, 
\begin{equation}
    \sum_{x'\in   L}  e_L(x')  \big(  P_{x'}[\tau_K<\infty, X_{\tau_K}=x]  -\epsilon P_{x'}(\tau_K=\infty) \big)_+  \underset{L \to \mathbb{V}}{\longrightarrow}.
\end{equation}
Bounding $P_{x'}[\tau_K<\infty, X_{\tau_K}=x]$ by $P_{x'}[\tau_K<\infty]$, the latter follows from the fact that for every $\epsilon>0$,
\begin{equation}
    \sum_{x'\in  L} e_L(x') \left((1+\epsilon) P_{x'}[\tau_K<\infty]-\epsilon \right)_+  \underset{L \to V}\longrightarrow 0,
  \end{equation}
which, finally, can be derived elementary from Lemma \ref{lem:quantitative_weak} by using the bound:
\begin{equation}
  P_{x'}[\tau_K<\infty] \le \sum_{z\in K}  P_{x'}[\tau_z<\infty]
\end{equation}
and playing on $\epsilon$.
\end{proof}

\subsection{Proof of the 0-1 law for increasing events}

In this section, we prove Theorem \ref{th:increasing_01law}. We begin with a lemma.

\begin{lemma}\label{lem:for_01_increasing}
  For every finite vertex set $K$, for every admissibe path $\gamma$ inside $K$, for every increasing event $E$ belonging to $\widetilde{\mathcal{T}}_{RI}$, we have:
  \begin{equation}
    \p(E | \In_K=\delta_\gamma) \le \p(E| \In_K=0).
  \end{equation}
\end{lemma}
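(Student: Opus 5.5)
The strategy is to compare both conditional laws outside a large finite set $L\supset K$ and then let $L\to\mathbb{V}$, using that $E\in\sigma(\widetilde{\Out}_L)$ for every $L$. The point is that, under $\p(\cdot|\In_K=\delta_\gamma)$, the configuration is $\omega'+\delta_{w^*}$. Here $\omega'$ has law $\p(\cdot|\In_K=0)$ (a Poisson point process of trajectories avoiding $K$). The extra trajectory $w$ passes through $K$ along $\gamma$, with hinge couple $(x,y)$ given by the extremities of $\gamma$, and is independent of $\omega'$. This gives the inequality in the \emph{wrong} direction for free. To get the claimed one, we exploit that $\widetilde{\Out}_L$ forgets pairings: the past piece $Out_{\to L}(w)$ and the future piece $Out_{L\to}(w)$ can be carried by two \emph{different} trajectories.

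The construction is as follows. Fix $L\supset K$ and let $x'$ (resp.\ $y'$) be the entrance (resp.\ exit) point of $w$ in $L$. Build a trajectory $w_1$ which avoids $K$ and has $Out_{\to L}(w_1)=Out_{\to L}(w)$. To do so, take the piece of $w$ before $L$ (which, given $x'$, is distributed as a time-reversed $P_{x'}[\cdot|\tau_L^+=\infty]$ and does not depend on what happens afterwards). Then complete it from $x'$ by sampling the rest under the law of a trajectory of $\nu$ restricted to $W_L^*\setminus W_K^*$, conditioned to enter $L$ at $x'$. Symmetrically, build $w_2$ avoiding $K$ with $Out_{L\to}(w_2)=Out_{L\to}(w)$, conditioned on the exit point $y'$. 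All the remaining randomness is taken independent. Then $\widetilde{\Out}_L(\omega'+\delta_{w^*})\le \widetilde{\Out}_L(\omega'+\delta_{w_1^*}+\delta_{w_2^*})$, where the order is the natural order on point measures, since the right-hand side contains all the pieces of the left-hand side plus the extra unused halves of $w_1,w_2$. We would like to deduce that $\{\widetilde{\Out}_L(\omega'+\delta_{w^*})\in \cdot\}\subset\{\widetilde{\Out}_L(\omega'+\delta_{w_1^*}+\delta_{w_2^*})\in\cdot\}$ for an increasing $E$, and hence that
\[
\p(E|\In_K=\delta_\gamma)\le \mathbf{P}(\omega'+\delta_{w_1^*}+\delta_{w_2^*}\in E)\le \p(E|\In_K=0)+\delta_1(L)+\delta_2(L),
\]
where $\delta_1(L)$ and $\delta_2(L)$ are total variation distances coming from adding one point to a Poisson point process.

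\textbf{Gap.} There is a technical step here that I have not resolved. The lemma's order is the one on $\Omega$, while we need $E$ to be increasing with respect to $\widetilde{\Out}_L$ as a function of $\omega$. An increasing $E\in\sigma(\widetilde{\Out}_L)$ is of the form $\{\widetilde{\Out}_L\in F\}$, but it is not clear that $F$ itself can be chosen increasing. Some argument is needed to show that one can, or to bypass this, before the inclusion above is justified.

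It remains to bound $\delta_1$ and $\delta_2$. Via the hinge description of Section \ref{sec:hinge_decomp}, the hinge process of $\omega'$ on $L$ is $PPP(h_L^K)$, and by construction $w_1$ is an extra, independent point whose hinge couple has law $\pi_L(x',y'')=P_x[X_{\lambda_L}=x'|\tau_K^+=\infty]\,P_{x'}[X_{\lambda_L}=y''|\tau_K=\infty]$, while all non-hinge parts are resampled with the correct conditional laws. So, as in Remark \ref{rem:rewrite_d_TV_cond}, $\delta_1(L)\le d_{TV}(PPP(h_L^K)\oplus\pi_L,PPP(h_L^K))$. By Lemma \ref{lem:general_cond_dTVto0_for_PPP} it suffices to show $\sum(\pi_L-\epsilon h_L^K)_+\to0$. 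The reversibility identity used in the proof of Theorem \ref{th:weak_01law} gives $P_x[X_{\lambda_L}=x'|\tau_K^+=\infty]=\frac{e_L(x')}{e_K(x)}P_{x'}[\tau_K<\infty,X_{\tau_K}=x]$. Moreover $h_L^K(x',\cdot)$ has total mass $e_L(x')P_{x'}[\tau_K=\infty]$, with the same conditional law in $y''$ as $\pi_L(x',\cdot)$. Hence the sum reduces to a one-variable sum of exactly the form handled at the end of the proof of Theorem \ref{th:weak_01law}. That sum tends to $0$ by Lemma \ref{lem:quantitative_weak}. For $w_2$ we add it to $\omega'+\delta_{w_1^*}$; as in the proof of Lemma \ref{lem:suf_cond_0_1_law_distance}, coupling the common part identically shows the distance is at most the one for adding $w_2$ alone to $\omega'$. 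The same computation with time reversed (Remark \ref{rem:backward_weak}) then gives $\delta_2(L)\to0$.

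Letting $L\to\mathbb{V}$ would give the lemma, modulo the gap above. The main obstacle I expect, beyond that gap, is checking carefully that $w_1$ built this way really produces, jointly with $\omega'$, exactly the law $PPP\oplus\pi_L$ at the level of $\widetilde{\Out}_L$ and not only at the level of hinges. This requires the spatial Markov property, namely that given the hinge couple and the entrance point, the outside past piece is independent of the rest.
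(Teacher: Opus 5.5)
Your proposal has a genuine gap exactly where you flag it, and that step is the crux of the lemma. The configurations $\omega'+\delta_{w^*}$ and $\omega'+\delta_{w_1^*}+\delta_{w_2^*}$ are incomparable in the order of $\Omega$. All you know about $E$ is two things: (a) it is increasing for the order on $\Omega$, and (b) it lies in $\sigma(\widetilde{\Out}_L)$, so two configurations with \emph{equal} $\widetilde{\Out}_L$ are both in $E$ or both outside it. The domination $\widetilde{\Out}_L(\omega'+\delta_{w^*})\le\widetilde{\Out}_L(\omega'+\delta_{w_1^*}+\delta_{w_2^*})$ does not invoke either property. So the first inequality of your chain is unproved, and without it you only have the trivial reverse inequality.

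The missing idea, which is how the paper proceeds, is to pass through an intermediate configuration obtained by re-pairing the surplus. The surplus on the right consists of exactly one past piece, $Out_{\to L}(w_2)$, and one future piece, $Out_{L\to}(w_1)$. Glue them into a single trajectory $w_{add}$:
\begin{itemize}
\item follow $Out_{\to L}(w_2)$ up to its entry point $a\in L$;
\item then follow any finite nearest-neighbour path from $a$ to the last-exit point $b$ of $w_1$, which exists by connectedness;
\item then follow $Out_{L\to}(w_1)$.
\end{itemize}
The past piece meets $L$ only at $a$ and the future piece only at $b$, so $Out_{\to L}(w_{add})=Out_{\to L}(w_2)$ and $Out_{L\to}(w_{add})=Out_{L\to}(w_1)$. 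Hence $\widetilde{\Out}_L(\omega'+\delta_{w^*}+\delta_{w_{add}^*})=\widetilde{\Out}_L(\omega'+\delta_{w_1^*}+\delta_{w_2^*})$ exactly. Now $\omega'+\delta_{w^*}\in E$ implies $\omega'+\delta_{w^*}+\delta_{w_{add}^*}\in E$ by (a), which implies $\omega'+\delta_{w_1^*}+\delta_{w_2^*}\in E$ by (b). This is precisely where forgetting the pairings is used.

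The same re-pairing answers your question about $F$ in general. For $\omega_1,\omega_2\in\Omega$ with $\widetilde{\Out}_L(\omega_1)\le\widetilde{\Out}_L(\omega_2)$, the surplus always contains as many past as future pieces. So $\omega_1\in E$ implies $\omega_2\in E$, without ever choosing $F$.

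With this step inserted, the rest of your argument goes through. Your estimates of $\delta_1(L)$ and $\delta_2(L)$ are correct, including the product structure of $\pi_L$ and $h_L^K$ and the data-processing bound for adding the independent $w_2$. They use the same input as the paper, namely \eqref{eq:criterion_d_TV_cond_bis} via Lemma \ref{lem:quantitative_weak}. The only difference is the direction of the coupling. The paper extracts $w_1,w_2$ from $\omega'$ and glues $w$ out of them. You start from $w$ and manufacture independent $w_1,w_2$, which makes the total-variation bookkeeping somewhat more transparent.
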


\begin{proof}[Proof of Lemma \ref{lem:for_01_increasing}]
We consider a finite vertex set $K$, and an admissibe path $\gamma$ inside $K$, say with extremities $(x,y)\in \partial_X K\times \partial_X K$. Let furthermore $\epsilon >0$. We are going to construct a vertex set $L$ (containing $K$) and, under a probability denoted $\mathbf{P}$, a coupling $(\omega , \omega')$ of $\p( \cdot | \In_K=\delta_\gamma) \le \p( \cdot | \In_K=0)$ together with random trajectories $w^*, w_1^*, w_2^*\in W^*$ such that, with probability not smaller than $1-\epsilon$: 
\begin{itemize}
  \item $w^*$ belongs to $\omega$ and is precisely the trajectory of $\omega$ that visits $K$, $w_1^*$ and $w_2^*$ belong to $\omega'$;
  \item $\omega- \delta_{w^*} = \omega'-\delta_{w_1^*}-\delta_{w_2^*}$;
  \item $Out_{\to L}(w)=Out_{\to L}(w_1) $, i.e., $w^*$ and $w_1^*$ coincide up to their entry in $L$, in particular their entry points in $L$ coincide;
  \item $Out_{L \to}(w)=Out_{L \to}(w_2) $, i.e., $w^*$ and $w_2^*$ coincide from their last visit to $L$, in particular their last visit points in $L$ coincide;
\end{itemize}
where $w, w_1$ and $w_2$ are representatives of $w^*$, $w_1^*$ and $w_2^*$ respectively. 

\smallskip

To construct the coupling, we use twice the result stated in \eqref{eq:criterion_d_TV_cond_bis} and established during the proof of Theorem \ref{th:weak_01law}, which we formulate in this way: for a large enough $L$, there is a way, given a Poisson point process $\omega_{entry}$ under $ \p( \Hin_{\to L} \in \cdot | \In_K=0)$ to pick a point $X$ in $\omega_{entry}$ such that the couple $(\omega_{entry}-\delta_X, X)$ is at total variation distance at most $\epsilon/2$ from the product law $ \p( \Hin_{\to L} \in \cdot | \In_K=0) \otimes p_L^{K,x}$, and similarly with $x$ replaced by $y$.

So, consider the Poisson point process $\omega'$ distributed under $\p( \cdot | \In_K=0)$. Among the entry points to $L$ of the trajectories in $\omega'$, we pick a point $X$ such as prescribed above, and denote the corresponding trajectory $w_1^*$. Among the exit points of $\omega' - \delta_{w_1^*}$, also pick a point $Y$ such as described above and denote the corresponding trajectory $w_2^*$. Furthermore, sample (under the law of the MC $X$) a trajectory inside $L$ that links $X$ and $Y$ and leaves trace $\eta$ on $K$. Set $w^*$ to be the trajectory that coincides with $w_1^*$ upp to its entry in $L$, is given inside $L$ by the previously sampled trajectory, and coincides with $w_2^*$ after its last visit to $L$. Finally set $\omega$ to be $\omega'-\delta_{w_1^*} - \delta_{w_2^*} + \delta_{w^*}$. 




\medskip

Let us finally derive the lemma. For every realisation of the coupling, we pick (arbitrarily) a trajectory $w_{add} \in W_L$ such that $Out_{\to L}(w_{add})=Out_{\to L}(w_2) $ and $Out_{L \to}(w_{add})=Out_{L \to}(w_1) $ and we set $w_{add}^* =\pi(w_{add})$. 
We observe that, when the coupling is successful, we have $\widetilde{\Out}_L(\omega +\delta_{w_{add}^*})= \widetilde{\Out}_L(\omega')$. It follows that for every increasing event $E$ belonging to $\widetilde{\mathcal{T}}_{RI}$,
\begin{equation}
  \begin{aligned}
    \p(E | \In_K=\delta_\gamma) 
    & = \mathbf{P}(\omega  \in E)\\
    & \le \mathbf{P}(\omega +\delta_{w_{add}^*} \in E)\\
    & \le \mathbf{P}(\omega' \in E) + \epsilon\\
    & = \p(E| \In_K=0) +\epsilon.
  \end{aligned}
\end{equation}
Since $\epsilon$ is arbitrary, the lemma follows.
\end{proof}

\begin{proof}[Proof of Theorem \ref{th:increasing_01law}]
  From Lemma \ref{lem:for_01_increasing}, we derive that for every finite vertex set $K$, for every admissible configuration $\eta$ inside $K$, for every admissibe path $\gamma$ inside $K$, for every increasing event $E$ belonging to $\widetilde{\mathcal{T}}_{RI}$, we have:
  \begin{equation}\label{eq:extend_for_01_increasing}
    \p(E | \In_K=\eta + \delta_\gamma) \le \p(E| \In_K=\eta).
  \end{equation}
  This relies on the observation that if $E$ is increasing and belongs to  $\widetilde{\mathcal{T}}_{RI}$, then for every (finite) configuration $\sum_{i=1}^k \delta_{w_i^*}$, with $k\in \N$, and $w_i^*\in W^*$, the event
  \begin{equation}
    \left\{ \omega \in \Omega : \omega + \sum_{i=1}^k \delta_{w_i^*} \in E \right\}
  \end{equation} 
  is also increasing and also belongs to $\widetilde{\mathcal{T}}_{RI}$. Just apply this fact to the configuration arising from $\eta$ to derive \eqref{eq:extend_for_01_increasing} from Lemma \ref{lem:for_01_increasing}.

  By induction, it follows from \eqref{eq:extend_for_01_increasing} that for every finite vertex set $K$, for every admissible configuration $\eta$ inside $K$, for every increasing event $E$ belonging to $\widetilde{\mathcal{T}}_{RI}$, we have:
  \begin{equation}\label{eq:extend_encore_for_01_increasing}
    \p(E | \In_K=\eta ) \le \p(E| \In_K= 0 ).
  \end{equation}
  We finally observe that the converse inequality holds, since $\p( \cdot | \In_K=\eta ) $ dominates $\p(\cdot | \In_K= 0 )$. So we have the equality in \eqref{eq:extend_encore_for_01_increasing} and we deduce that any increasing event $E$ belonging to $\widetilde{\mathcal{T}}_{RI}$ is independent of $\In_K$ for every finite vertex set $K$, and therefore is independent of $\mathcal{A}$, thus also of itself, so that $\p(E)\in \{0, 1\}$. 
\end{proof}

\bibliographystyle{plainnat}
\bibliography{bibliography}

@article{Teixeira2009,
  title = {Interlacement percolation on transient weighted graphs},
  volume = {14},
  ISSN = {1083-6489},
  url = {http://dx.doi.org/10.1214/EJP.v14-670},
  DOI = {10.1214/ejp.v14-670},
  number = {none},
  journal = {Electronic Journal of Probability},
  publisher = {Institute of Mathematical Statistics},
  author = {Teixeira,  Augusto},
  year = {2009},
  month = jan 
}

@article{Sznitman2010,
  title = {Vacant set of random interlacements and percolation},
  volume = {171},
  ISSN = {0003-486X},
  url = {http://dx.doi.org/10.4007/annals.2010.171.2039},
  DOI = {10.4007/annals.2010.171.2039},
  number = {3},
  journal = {Annals of Mathematics},
  publisher = {Annals of Mathematics},
  author = {Sznitman,  Alain-Sol},
  year = {2010},
  month = apr,
  pages = {2039–2087}
}

@article{Griffeath1975,
  title = {A maximal coupling for {Markov} chains},
  volume = {31},
  ISSN = {1432-2064},
  url = {http://dx.doi.org/10.1007/BF00539434},
  DOI = {10.1007/bf00539434},
  number = {2},
  journal = {Zeitschrift f\"ur Wahrscheinlichkeitstheorie und Verwandte Gebiete},
  publisher = {Springer Science and Business Media LLC},
  author = {Griffeath,  David},
  year = {1975},
  pages = {95–106}
}

@article{Pitman1976,
  title = {On coupling of {Markov} chains},
  volume = {35},
  ISSN = {1432-2064},
  url = {http://dx.doi.org/10.1007/BF00532957},
  DOI = {10.1007/bf00532957},
  number = {4},
  journal = {Zeitschrift f\"ur Wahrscheinlichkeitstheorie und Verwandte Gebiete},
  publisher = {Springer Science and Business Media LLC},
  author = {Pitman,  J. W.},
  year = {1976},
  pages = {315–322}
}

@book{Lawler2010,
  title = {Random Walk: A Modern Introduction},
  ISBN = {9780511750854},
  url = {http://dx.doi.org/10.1017/CBO9780511750854},
  DOI = {10.1017/cbo9780511750854},
  publisher = {Cambridge University Press},
  author = {Lawler,  Gregory F. and Limic,  Vlada},
  year = {2010},
  month = jun 
}

@article{Collin2024,
  title = {Two-dimensional random interlacements: 0-1 law and the vacant set at criticality},
  volume = {169},
  ISSN = {0304-4149},
  url = {http://dx.doi.org/10.1016/j.spa.2023.104272},
  DOI = {10.1016/j.spa.2023.104272},
  journal = {Stochastic Processes and their Applications},
  publisher = {Elsevier BV},
  author = {Collin,  Orphée and Popov,  Serguei},
  year = {2024},
  month = mar,
  pages = {104272}
}

@book{Last2017,
  title = {Lectures on the Poisson Process},
  ISBN = {9781107458437},
  url = {http://dx.doi.org/10.1017/9781316104477},
  DOI = {10.1017/9781316104477},
  publisher = {Cambridge University Press},
  author = {Last,  G\"{u}nter and Penrose,  Mathew},
  year = {2017},
  month = oct 
}

\end{document}